\numberwithin{equation}{section}
\newtheorem{theorem}{Theorem}[section]
\newtheorem{proposition}[theorem]{Proposition}
\newtheorem{corollary}[theorem]{Corollary}
\newtheorem{lemma}[theorem]{Lemma}
\theoremstyle{definition}
\newtheorem{definition}[theorem]{Definition}
\newtheorem{problem}[theorem]{Problem}
\theoremstyle{remark}
\newtheorem{remark}[theorem]{Remark}
\DeclareMathOperator{\sech}{sech}
\DeclareMathOperator{\cn}{cn}
\DeclareMathOperator{\sn}{sn}
\DeclareMathOperator{\am}{am}
\newcommand{\R}{\mathbf{R}}
\newcommand{\Z}{\mathbf{Z}}
\newcommand{\N}{\mathbf{N}}
\newcommand{\vp}{\varphi}
\newcommand{\B}{\mathcal{B}}
\newcommand{\K}{\mathrm{K}}
\newcommand{\E}{\mathrm{E}}
\newcommand{\Ap}{\mathcal{A}_\mathrm{pin}}
\newcommand{\Ah}{\mathcal{A}_\mathrm{hook}}
\newcommand{\amcn}{\am_{1,p}}
\newcommand{\Ecn}{\mathrm{E}_{1,p}}
\newcommand{\Fcn}{\mathrm{F}_{1,p}}
\newcommand{\Kcn}{\mathrm{K}_{1,p}}
\begin{document}

\title{Variational stabilization of degenerate $p$-elasticae}

\author{Tatsuya Miura}
\address[T.~Miura]{Department of Mathematics, Tokyo Institute of Technology, 2-12-1 Ookayama, Meguro-ku, Tokyo 152-8551, Japan \\
Current address: Department of Mathematics, Graduate School of Science, Kyoto University,
Kitashirakawa Oikawa-cho, Sakyo-ku, Kyoto 606-8502, Japan}
\email{tatsuya.miura@math.kyoto-u.ac.jp}

\author{Kensuke Yoshizawa}
\address[K.~Yoshizawa]{Faculty of Education, Nagasaki University, 1-14 Bunkyo-machi, Nagasaki, 852-8521, Japan}
\email{k-yoshizaw@nagasaki-u.ac.jp
}

\subjclass[2020]{49Q10, 53A04, and 33E05.}

\date{\today}

\begin{abstract}
    A new stabilization phenomenon induced by degenerate diffusion is discovered in the context of pinned planar $p$-elasticae.
    It was known that in the non-degenerate regime $p\in(1,2]$, including the classical case of Euler's elastica, there are no local minimizers other than unique global minimizers.
    Here we prove that, in stark contrast, in the degenerate regime $p\in(2,\infty)$ there emerge uncountably many local minimizers with diverging energy.
\end{abstract}


\maketitle

\section{Introduction}

Nonlinear diffusion often provokes peculiar behavior of solutions, leading to new analytical challenges.
In this paper we discover a strong variational stabilization phenomenon due to (1D) degenerate diffusion in the context of pinned planar $p$-elasticae.
More precisely, we study the structure of local minimizers of the $p$-bending energy under the fixed-length constraint and the pinned boundary condition.
Let $p\in(1,\infty)$.
The $p$-bending energy is a fundamental geometric energy defined by
$$\B_p[\gamma]:=\int_\gamma |k|^p\,ds,$$
where $\gamma$ is a planar curve, $k$ is the signed curvature, and $s$ is the arclength parameter.
For $L>0$ we define
$$W^{2,p}_\mathrm{arc}(0,L;\R^2):=\Set{ \gamma \in W^{2,p}(0,L; \R^2) | {} |\gamma'|\equiv1 },$$
the set of arclength parametrized curves of length $L$ in the $W^{2,p}$-Sobolev class.
Given $P_0,P_1\in\R^2$ such that $|P_1-P_0|<L$, we define the admissible space $\Ap$ by
\begin{equation}\notag
    \Ap=\Ap(P_0,P_1,L):=
    \Set{\gamma \in W^{2,p}_\mathrm{arc}(0,L; \R^2) | \gamma(0)=P_0, \ \gamma(L)=P_1 },
\end{equation}
equipped with the $W^{2,p}$-Sobolev metric.
(Recall that $W^{2,p}(0,L)\subset C^1([0,L])$.)
As usual, $\gamma\in\Ap$ is called a global minimizer (resp.\ local minimizer) if $\B_p[\eta]\geq\B_p[\gamma]$ holds for all $\eta\in\Ap$ (resp.\ for all $\eta\in\Ap$ in a neighborhood of $\gamma$).
Previous results ensure that there are infinitely many critical points, and among them global minimizers are unique up to isometries \cite{MR4852697}.
As for local minimality (stability), the pinned boundary condition allows any rotation of the tangent directions at the endpoints, so that critical points have a strong predisposition towards instability.
Indeed, in the classical case $p=2$, it was known that all but global minimizers are unstable; see e.g.\ Maddocks' linear stability analysis in 1984 \cite{Maddocks1984}.
For $p\neq2$, stability analysis is much more delicate since in general the $p$-bending energy involves a generic loss of regularity (where $k=0$) and is not compatible with standard second-variation arguments due to the lack of Hilbert structure.
However, we recently succeeded in extending the classical instability result to the case $p\in(1,2]$ as well as a part of the case $p\in(2,\infty)$ by a totally new approach.

\begin{theorem}[\cite{MY_Crelle}]\label{thm:previous_rigidity}
    If $p\in(1,2]$, or if $p\in(2,\infty)$ and $|P_1-P_0|\leq \frac{1}{p-1}L$, then there are no local minimizers of $\B_p$ in $\Ap$ other than global minimizers.
\end{theorem}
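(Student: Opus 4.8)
The plan is to recast this geometric instability statement as a one-dimensional constrained variational problem for the tangent angle, and then, for every non-global critical point, to exhibit an explicit admissible perturbation that strictly lowers the energy. First I would reduce to the curvature formulation. Writing an arclength-parametrized $\gamma$ via its tangent angle $\theta(s)=\arg\gamma'(s)$, we have $k=\theta'$ and $\B_p[\gamma]=\int_0^L|\theta'|^p\,ds=:E[\theta]$. After a rigid rotation placing $P_1-P_0=(r,0)$ with $r:=|P_1-P_0|\in[0,L)$, the pinned constraints $\gamma(0)=P_0$, $\gamma(L)=P_1$ become the two scalar constraints
\begin{equation}\notag
\int_0^L\cos\theta\,ds=r,\qquad \int_0^L\sin\theta\,ds=0.
\end{equation}
Thus $\gamma$ is a local minimizer of $\B_p$ in $\Ap$ if and only if $\theta$ is a local minimizer of $E$ in $W^{1,p}(0,L)$ subject to these two constraints (the global rotational freedom having been fixed by the placement of $P_1-P_0$). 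Introducing Lagrange multipliers $\lambda,\mu$, critical points solve the $p$-pendulum equation $(|\theta'|^{p-2}\theta')'=c\sin(\theta-\theta_0)$ together with the natural boundary conditions $\theta'(0)=\theta'(L)=0$, i.e.\ the curvature $k$ vanishes at both endpoints.

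Next I would invoke the classification of pinned $p$-elasticae from \cite{MYarXiv2209}: up to symmetry these critical points are organized by the number $n\geq1$ of nodal domains (``bumps'') of the curvature $k$, being concatenations of congruent fundamental arcs joined at the interior zeros of $k$, with the single-arc configuration $n=1$ the unique global minimizer. It therefore suffices to prove that every critical point with $n\geq2$ fails to be a local minimizer.

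The heart of the argument, and what I expect to be the main obstacle, is the construction of an energy-decreasing admissible variation for $n\geq2$. My plan is to use the interior zeros of $k$ as ``hinges'': since $k$ vanishes there, I would design a perturbation $\theta_\epsilon=\theta+\epsilon\phi+o(\epsilon)$ concentrated near a pair of adjacent bumps, choosing $\phi$ so that the two linearized constraints $\int_0^L(\sin\theta)\,\phi\,ds=0$ and $\int_0^L(\cos\theta)\,\phi\,ds=0$ hold, and then show $E[\theta_\epsilon]<E[\theta]$. For $p=2$ this is Maddocks' second-variation computation; the genuine difficulty for $p\neq2$ is that the linearized energy density carries the weight $(p-1)|\theta'|^{p-2}$, which is singular (for $p<2$) or degenerate (for $p>2$) precisely at the hinges, so the quadratic second-variation framework is unavailable and there is no Hilbert structure to fall back on. I would handle this by localizing the competing perturbation away from the zeros, where the energy is smooth and uniformly convex, while treating the contribution near each zero through the explicit elliptic-function description of the fundamental arc, tracking the true (possibly non-quadratic) leading-order sign of the energy variation.

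Finally, the quantitative condition $|P_1-P_0|\leq\tfrac{1}{p-1}L$ enters exactly at this stage: it is what guarantees that the part of $\phi$ needed to restore the constraints can be absorbed with a net strict decrease of $E$, the factor $p-1$ arising naturally from the stiffness $(p-1)|\theta'|^{p-2}$ produced by linearizing the $p$-pendulum operator. When this inequality is violated in the degenerate range $p>2$, the degeneracy at the hinges can no longer be overcome, which is consistent with---and indeed foreshadows---the emergence of the new local minimizers that are the subject of the present paper.
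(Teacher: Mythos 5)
Your proposal is a plan rather than a proof, and the step you yourself flag as ``the main obstacle'' is exactly the step that cannot be carried out the way you describe. The present paper does not prove Theorem~\ref{thm:previous_rigidity} at all: it is quoted from \cite{MYarXiv2301}, and the introduction states explicitly that the $p$-bending energy ``is not compatible with standard second-variation arguments due to the lack of Hilbert structure'' and that the cited result was obtained ``by a totally new approach'' (a geometric instability method; see the remark after Definition~\ref{def:alternating}, which derives the theorem from Corollaries 2.12 and 2.14 and Proposition 6.3 of \cite{MYarXiv2301}). Your strategy --- linearize the constrained problem for $\theta$, build a constraint-compatible $\phi$ supported near a pair of bumps, and ``track the leading-order sign'' near the zeros of $k$ --- is precisely the Maddocks-type second-variation scheme whose failure for $p\neq2$ is the raison d'\^etre of the cited work. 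At the hinges the weight $|\theta'|^{p-2}$ is singular ($p<2$) or degenerate ($p>2$), the energy is not twice differentiable there, and the leading-order behaviour of $E[\theta_\epsilon]-E[\theta]$ is genuinely non-quadratic and sign-indefinite in a way that ``localizing away from the zeros'' does not resolve: the decisive contribution comes from the zeros themselves. Nothing in your sketch supplies the actual inequality, so the heart of the argument is missing.

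Two further points are concretely wrong or incomplete. First, your classification claim --- that all non-global critical points are concatenations of congruent wavelike arcs indexed by the number of bumps --- omits the flat-core pinned $p$-elasticae, which exist precisely when $p>2$ and $|P_1-P_0|\geq\frac{1}{p-1}L$ (see \eqref{eq:N-loop-flat-core}--\eqref{eq:sum-flatparts}). In the borderline case $|P_1-P_0|=\frac{1}{p-1}L$, which your theorem statement must cover, $\Ap$ does contain flat-core critical points (each forced to have a loop touching an endpoint), and their instability requires a separate argument; your bump-pair perturbation does not even apply to them since their curvature vanishes on whole intervals. Second, your account of where the threshold $\frac{1}{p-1}$ comes from is not right: it does not arise from the ``stiffness $(p-1)|\theta'|^{p-2}$'' of the linearized operator, but from the geometry of the limiting loop $\gamma_b^{\pm}$, whose horizontal displacement over its length gives the ratio $\frac{1}{p-1}$ (equivalently, $\lim_{q\uparrow1}Q_p(q)=-\frac{1}{p-1}$ for $p>2$ in Lemma~\ref{lem:nabe-lem2}); it is the existence threshold for flat-core competitors, not a constant produced by the second variation.
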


Up to now it was completely open whether there exists a nontrivial local minimizer in the remaining case.
In this paper we discover that the remaining case is surprisingly different from the classical case, proving that not only there emerge nontrivial local minimizers but also their energy can be arbitrarily large.

\begin{theorem}\label{thm:main_stabilization}
    If $p\in(2,\infty)$ and $|P_1-P_0|>\frac{1}{p-1}L$, then there exists an uncountable family $\{\gamma_\delta\}_{\delta}$ of local minimizers of $\B_p$ in $\Ap$ such that $\sup_{\delta}\B_p[\gamma_\delta]=\infty$.
\end{theorem}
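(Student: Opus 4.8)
\emph{Plan of proof.} The plan is to build the family $\{\gamma_\delta\}$ by gluing straight segments to suitably scaled copies of an explicit \emph{degenerate} $p$-elastica arc, and to locate the difficulty entirely in the local-minimality step. I would first pass to the tangential angle $\theta$, writing $\gamma'=(\cos\theta,\sin\theta)$ and $k=\theta'$, so that $\B_p[\gamma]=\int_0^L|\theta'|^p\,ds$, the unit-speed/length constraint is automatic, and after a rotation the endpoint condition becomes the two moment constraints $\int_0^L\cos\theta\,ds=R$ and $\int_0^L\sin\theta\,ds=0$ with $R:=|P_1-P_0|$, while the pinned (free-tangent) condition turns into the natural boundary condition $\theta'(0)=\theta'(L)=0$. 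In these variables a $p$-elastica solves the pendulum-type equation $(|\theta'|^{p-2}\theta')'=c\,\sin(\theta-\theta_0)$, and the key structural fact for $p>2$ is that this operator is non-Lipschitz at $\theta'=0$: a nonconstant solution can therefore detach from and reattach to a flat state $\theta\equiv\mathrm{const}$ in \emph{finite} arclength. I would fix one such building block, a curved arc $A$ with $k=0$ at both ends (the finite-length degenerate orbit connecting two equilibria).

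Next I would form $\gamma_\delta$ by concatenating, in order, a straight segment, a scaled arc $\delta A$, a straight segment, a second scaled arc, and a final straight segment (a ``hairpin''), and choose the three straight lengths so that the total length is $L$ and the two moment constraints hold. Since the arc $A$ contributes fixed length-to-displacement proportions, these are three linear equations whose solution has nonnegative (indeed positive) straight lengths for all small scales $\delta\in(0,\delta_0)$; letting $\delta$ vary yields an uncountable family. Criticality of each $\gamma_\delta$ is then immediate: on each piece the profile solves the pendulum equation; at the junctions $k=0$, so the momentum $|\theta'|^{p-2}\theta'$ matches ($0=0$) and $\gamma_\delta\in W^{2,p}$; and at the two endpoints $\theta'=0$ is exactly the natural boundary condition. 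Hence each $\gamma_\delta$ is a genuine $p$-elastica in $\Ap$.

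The hard part will be local minimality, and this is where the threshold $R>\frac{1}{p-1}L$ must enter. The obstruction is precisely the absence of a Hilbert/second-variation structure: where $k=0$ the integrand $|\theta'|^p$ has vanishing second derivative, so the usual coercive quadratic form degenerates and positivity of a second variation cannot be invoked. My plan is therefore a direct energy-comparison rather than a linearized argument. Given an admissible competitor with $\theta_\delta+\varphi$, $\varphi$ small in $W^{1,p}$, I would split $[0,L]$ into curved zones and flat zones, estimate the energy excess directly from the elementary convexity of $|\cdot|^p$, and use the two linear moment constraints (through their Lagrange multipliers) to absorb all first-order terms. The essential point is to show that any redistribution of turning angle away from the curved arcs into the flat zones is penalized at the relevant order precisely when $R>\frac{1}{p-1}L$; I expect the crux to be a sharp scalar inequality encoding this balance—essentially that the displacement-to-length ratio of the building arc $A$ equals $\frac{1}{p-1}$—together with the fact, guaranteed by the threshold, that the surrounding straight length stays positive so the arcs cannot merge or spread.

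Finally, diverging energy comes for free from the scaling. On each curved arc $\theta'$ scales like $\delta^{-1}$ while the arclength scales like $\delta$, so each scaled arc contributes energy of order $\delta^{1-p}$ and $\B_p[\gamma_\delta]\asymp\delta^{1-p}\to\infty$ as $\delta\to0^+$, whence $\sup_\delta\B_p[\gamma_\delta]=\infty$. Thus a single one-parameter family supplies both uncountably many local minimizers and unbounded energy, and the only genuinely nontrivial input is the degenerate local-minimality estimate of the previous paragraph.
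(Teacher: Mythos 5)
There is a genuine gap, and it is already in the construction. Your building block $A$ (the finite-length degenerate loop, $\gamma_b^\pm$ in the paper's notation) has a \emph{fixed} displacement-to-length ratio $\tfrac{1}{p-1}$ along its axis and zero transverse displacement. Hence, for a hairpin with a fixed number of arcs, the constraints are \emph{not} three independent linear equations in the three straight lengths: the transverse moment is trivially $0=0$, and the remaining two equations read $\sum_j L_j = L - 4\delta\K_p(1)$ and $\sum_j L_j = R - \tfrac{4\delta\K_p(1)}{p-1}$, which are consistent only for the single value $\delta=\tfrac{(p-1)(L-R)}{4(p-2)\K_p(1)}$. So the scale of the arcs is determined by the number of arcs; you cannot let $\delta\to0^+$ with two arcs, and the claimed divergence $\B_p[\gamma_\delta]\asymp\delta^{1-p}\to\infty$ does not occur. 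In the paper the energy of an $N$-loop configuration is the fixed number $C_p(2N)^p/(L-R)^{p-1}$; unboundedness is obtained by letting the \emph{number of loops} $N\to\infty$ (which forces the common loop scale to shrink), and uncountability comes from the $N$-simplex of admissible distributions of the total flat length among the $N+1$ segments (already an interval for $N=1$), not from a scale parameter.

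The second gap is the local-minimality step, which you correctly flag as the crux but whose proposed mechanism would fail. Convexity of $|\cdot|^p$ plus absorption of first-order terms by Lagrange multipliers is a linearization-type argument, and exactly where $k=0$ the functional has vanishing second variation while the constraints $\int\cos\theta$, $\int\sin\theta$ are genuinely nonlinear in the perturbation, so no sign can be extracted at the relevant order on the flat zones. The paper avoids expansions entirely: it cuts the curve at the top of each loop and at an interior point of each segment, shows each piece of any nearby competitor lies in a relaxed class $\Ah(\ell_i,L_i)$ (the \emph{hooked} boundary condition, prescribing only the horizontal displacement and one end tangent), classifies and explicitly minimizes $\B_p$ over that class to get the sharp lower bound $C_p(L_i-\ell_i)^{1-p}$, and then sums using Jensen's inequality for $x\mapsto x^{1-p}$ together with $\sum L_i=L$, $\sum\ell_i=\ell$. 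The threshold $R>\tfrac{1}{p-1}L$ and the positivity of all straight lengths enter by guaranteeing that each piece of the competitor satisfies $\ell_i\in[\tfrac{1}{p-1}L_i,L_i)$, which places it in the flat-core (rather than wavelike) branch of the hooked classification where the explicit bound holds. Your sketch identifies the right threshold ratio but supplies neither the reduction to a global problem nor the comparison inequality that closes the argument.
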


This phenomenon is strongly due to degeneracy in the nonlinear diffusion term of the Euler--Lagrange equation.
Our result shows for the first time that degeneracy induces new local minimizers in $p$-elastica theory.
Even in a broader context of nonlinear diffusion, to the authors' knowledge, our theorem seems to provide the first (or at least unusual) example that degeneracy yields new local minimizers of arbitrarily high energy.
Below we discuss this point in more details.

Nonlinear diffusion equations are differential equations with nonlinear generalizations of the (linear) Laplacian $\Delta$ in the top-order term.
The $p$-Laplacian 
$\Delta_p u := \nabla\cdot(|\nabla u|^{p-2}\nabla u)$
is a typical example of a nonlinear diffusion operator, arising from the first variation of the $p$-Dirichlet energy
$\int_\Omega |\nabla u|^p dx.$
(See e.g.\ the books \cites{DiB93_book, HKM06_book, Vaz06_book, Lind19} for details.)
The case of $p>2$ is said to be \emph{degenerate} since the ellipticity-related gradient term $|\nabla u|^{p-2}$ in $\Delta_p$ may vanish.
In the degenerate case, solutions to elliptic equations involving $\Delta_p$ may have nontrivial regions where the gradient vanishes, called \emph{flat core} \cite{KV93, Takeuchi12}, 
while solutions to parabolic equations may exhibit \emph{slow diffusion} with finite-speed propagation of free boundaries on the flat part \cite{Bar52, KV88} (see also \cite{CV21}).
Similar phenomena are also observed for the porous medium equation $u_t=\Delta(|u|^{m-1}u)$ with $m>1$.
Roughly speaking, degenerate diffusion makes solutions prefer to be flat, which can cause new stabilization effects.
Degeneracy-induced transitions at $p=2$ also appear in various ways, see e.g.\ Figalli--Zhang's recent stability result on the Sobolev inequality \cite{FigalliZhang22}.

Our problem here is concerned with \emph{$p$-elasticae}, i.e., critical points of the $p$-bending energy $\B_p$ under the fixed-length constraint.
The classical case $p=2$ corresponds to the celebrated problem of Euler's elastica studied from the 18th century (see e.g.\ \cite{Tru83, Lev, Sin, Sac08_2, Miura20}).
The quadratic bending energy arises from standard linear elasticity, while non-quadratic bending energies also appear in several contexts such as DNA cyclization \cite{DSSVV05} and image processing \cite{MM98, AM03, DFLM}.
In fact, even for $p\neq2$, there are also many studies on $p$-elasticae (e.g.\ \cite{nabe14, LP22, MY_AMPA, MR4852697, MY_Crelle}) as well as related problems on the $p$-bending energy (e.g.\ \cite{BDP93, AGM, BM04, BM07, MN13, AM17, FKN18,  NP20, OPW20, Poz20, SW20, BHV, BVH, Poz22, GPT23, OW23, DMOY}).
The Euler--Lagrange equation for $p$-elastica is formally given by
\begin{equation}\label{eq:p-elastica}
    p(|k|^{p-2}k)_{ss} +(p-1)|k|^pk - \lambda k =0, \quad \lambda \in \R.
\end{equation}
If $p>2$, this equation also involves degeneracy where $k$ vanishes.
Here we point out that the top-order operator in \eqref{eq:p-elastica} is rather of porous medium type $(|u|^{p-2}u)''$, while in terms of tangential angle $\theta$ the Euler--Lagrange equation involves exactly the $p$-Laplacian $(|u'|^{p-2}u')'$ \cite{nabe14}:
$$(|\theta_s|^{p-2}\theta_s)_s=R\sin(\theta+\alpha).$$
The degeneracy as well as singularity leads to generic loss of regularity and hence the analytical treatment of $p$-elastica is much more delicate than the classical one.
Building on Watanabe's previous work \cite{nabe14}, the authors recently obtained a complete classification of planar $p$-elasticae with explicit formulae and optimal regularity \cite{MY_AMPA}.
This in particular shows that in the non-degenerate regime $p\in(1,2]$ the zero set of the curvature $k$ is always discrete, while in the degenerate regime $p\in(2,\infty)$ the zero set may contain nontrivial intervals, which are also called flat core in this context.
In our subsequent study on the pinned boundary value problem \cite{MR4852697}, we classified all pinned $p$-elasticae and proved that any global minimizer is given by a convex arc, unique up to invariances.

Stability of $p$-elasticae is much more delicate.
In \cite{MY_Crelle} the authors developed a new geometric method to prove general instability results, which in particular imply Theorem \ref{thm:previous_rigidity}.
In the remaining case of Theorem \ref{thm:previous_rigidity}, our instability theory still partially works, implying that any local minimizer is necessarily either a global minimizer (convex arc) or a \emph{quasi-alternating} flat-core pinned $p$-elastica \cite{MY_Crelle}*{Corollary 2.14}; in particular, this implies the instability of (i) and (ii) in Figure~\ref{fig:quasi-flat} (see also Remark~\ref{rem:threshold-stability}).
However, it was totally open whether there indeed exists a stable quasi-alternating elastica.
Our main result, from which Theorem \ref{thm:main_stabilization} directly follows, resolves this problem in a generic sense:

\begin{theorem}[Theorem \ref{thm:alternating_detailed}]\label{thm:alternating_stability}
    Every alternating flat-core pinned $p$-elastica is stable.
\end{theorem}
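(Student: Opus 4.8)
The plan is to work with the tangential angle $\theta$, writing $\gamma'=(\cos\theta,\sin\theta)$ and $k=\theta'$, so that $\B_p[\gamma]=\int_0^L|\theta'|^p\,ds$ and the pinned condition becomes the pair of scalar constraints $\int_0^L\cos\theta\,ds=(P_1-P_0)_1$ and $\int_0^L\sin\theta\,ds=(P_1-P_0)_2$, with the tangent angles at $s=0,L$ free. For the given alternating flat-core $p$-elastica $\gamma$, the angle $\theta_\gamma$ is piecewise of two types: flat cores, on which $\theta_\gamma$ is constant (with $\theta_\gamma+\alpha\equiv\pi$ modulo $2\pi$ after normalizing the multiplier direction), and loops, on which $\theta_\gamma$ performs a monotone excursion of total variation $2\pi$, the orientation alternating from one loop to the next. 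The natural boundary condition forces $\theta_\gamma'(0)=\theta_\gamma'(L)=0$, and $\theta_\gamma$ solves $(|\theta_\gamma'|^{p-2}\theta_\gamma')'=R\sin(\theta_\gamma+\alpha)$ for suitable $R,\alpha$.

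Given a competitor $\eta\in\Ap$ close to $\gamma$ in $W^{2,p}$, I set $w:=\theta_\eta-\theta_\gamma$, which is small in $W^{1,p}\hookrightarrow C^0$. The starting point is the convexity identity
\begin{equation}\notag
\B_p[\eta]-\B_p[\gamma]=Q[w]+D[w],\qquad Q[w]:=\int_0^L\bigl(|\theta_\gamma'+w'|^p-|\theta_\gamma'|^p-p|\theta_\gamma'|^{p-2}\theta_\gamma'\,w'\bigr)\,ds\ge0,
\end{equation}
where $D[w]=p\int_0^L|\theta_\gamma'|^{p-2}\theta_\gamma'\,w'\,ds$. Integrating $D[w]$ by parts, the boundary terms vanish by the natural boundary condition, and the Euler--Lagrange equation turns the bulk term into $-pR\int_0^L\sin(\theta_\gamma+\alpha)\,w\,ds$. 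The two exact endpoint constraints say precisely that $\int_0^L\cos(\theta_\eta+\alpha)\,ds$ and $\int_0^L\sin(\theta_\eta+\alpha)\,ds$ are unchanged; expanding the first to second order converts $D[w]$ into the genuinely quadratic potential term $\tfrac{pR}{2}\int_0^L\cos(\theta_\gamma+\alpha)\,w^2\,ds+O(\|w\|^3)$, while the second yields a linear constraint coupling the values of $w$ across the flat cores.

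I would then split $[0,L]$ into the loops and the flat cores. On each loop $|\theta_\gamma'|$ is bounded below on compact interior subintervals, so there $Q[w]$ is genuinely (locally uniformly) convex and controls $\int|w'|^2$; this pins down $w$ along the loops up to the boundary values it inherits from the adjacent flat cores. On the flat cores $\theta_\gamma'\equiv0$, so $Q[w]$ only controls $\int|w'|^p$, which for $p>2$ is super-quadratic and hence too weak to absorb the potential term, whose sign there is destabilizing. The decisive point is therefore that the only nearly-free directions are rigid rotations of the individual flat cores; encoding these by the angles $c_0,\dots,c_m$ (the mean values of $w$ on the flat cores), I would reduce the inequality $\B_p[\eta]\ge\B_p[\gamma]$, to leading order, to the nonnegativity of a finite-dimensional quadratic form in $(c_0,\dots,c_m)$ restricted to the subspace cut out by the linearized fixed-endpoint constraints. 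Here the cumulative effect of a flat-core rotation on all subsequent loops and on the terminal point $P_1$ enters through a transfer/lever-arm structure, and it is exactly the alternating orientation of the loops that makes this reduced form positive semidefinite, with kernel only the rigid motions excluded by the constraints.

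The main obstacle is precisely this flat-core analysis. Because the degeneracy at $p>2$ renders $Q$ merely $L^p$-coercive on the flat cores, the destabilizing quadratic potential term cannot be controlled locally and must be defeated globally, using the fixed-endpoint constraints together with the alternating geometry. Two technical points require care: first, justifying the reduction of the infinite-dimensional perturbation to the finitely many rotation parameters, that is, showing that the sub-quadratic remainders and the loop deformations are dominated by the leading form; and second, proving that the constrained finite-dimensional form is positive definite exactly in the alternating case (and indefinite otherwise, consistent with Theorem \ref{thm:previous_rigidity} and the instability of merely quasi-alternating configurations). Establishing this sharp sign condition, uniformly in the number of loops $m$, is the crux and is what ultimately yields stability for the whole family and hence the unbounded-energy conclusion.
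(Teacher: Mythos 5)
Your plan is a second-variation program in the tangential angle, which is a genuinely different route from the paper's, but it contains misreadings and a gap at its core that I do not see how to repair. First, ``alternating'' in Theorem \ref{thm:alternating_stability} refers to segments and loops alternating, i.e.\ to the strict positivity of all segment lengths $L_1,\dots,L_{N+1}$ in Definition \ref{def:alternating}; the orientations $\sigma_1,\dots,\sigma_N$ are completely arbitrary. A mechanism whose positivity is supposed to come from ``the alternating orientation of the loops'' is therefore aimed at the wrong hypothesis. Second, the sign bookkeeping on the flat cores appears to be reversed: in the regime $|P_1-P_0|>\frac{1}{p-1}L$ the multiplier (tension) stretches the curve and the flat cores are aligned with it, sitting at the extremal value of $\cos(\theta_\gamma+\alpha)$, so the quadratic potential generated by the horizontal constraint is \emph{positive} on the flat cores, and rigid rotations of individual segments are quadratically penalized rather than nearly free; the destabilizing part of the potential lives near the loop tops, where the convexity term is non-degenerate and the competition is exactly the one that comes out \emph{negative} for every non-minimizer when $p=2$ (Maddocks). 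Hence the finite-dimensional reduced form in the flat-core means is not the right object, and its positive semidefiniteness is asserted rather than proved. Third, and most fundamentally, even a correct nonnegative second variation would not yield local minimality here: for $p>2$ the convexity surplus $Q[w]$ is only $O(\|w'\|_{L^p}^p)$ wherever $\theta_\gamma'$ vanishes, so quadratic remainders cannot be absorbed, and this incompatibility with second-variation arguments is precisely why the paper proceeds differently.

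The paper's proof is global rather than infinitesimal. It cuts the alternating elastica at the top of each loop and at an interior point of each inner segment into $2N$ arcs, each belonging to a ``hooked'' class $\Ah(\ell_i,L_i)$ or $\Ah'(\ell_i,L_i)$ in which only the horizontal displacement and one terminal tangent are prescribed; it classifies all critical points of this auxiliary free boundary problem and shows that each arc is the unique global minimizer in its class, with explicit energy $C_p(L_i-\ell_i)^{1-p}$ (Theorem \ref{thm:min_free_half}). Any $W^{2,p}$-close competitor can be cut at nearby points where its tangent equals $-e_1$ (such points exist near each loop top since $k\neq0$ there, and the strict positivity of all $L_j$ keeps every piece in the regime $\frac{1}{p-1}\tilde L_i<\tilde\ell_i<\tilde L_i$); summing the $2N$ hooked lower bounds and applying Jensen's inequality to the convex function $x\mapsto x^{1-p}$ returns exactly $\B_p[\gamma]$ (Lemma \ref{lem:relaxation}). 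If you wish to pursue your route, the content of the hooked decomposition is exactly the statement you would need on the flat cores: the exact (not linearized) constraint converts any flat-core perturbation into a transfer of horizontal projection onto the loops, and the loops' global minimality in the hooked problem prices that transfer at the multiplier rate.
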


Roughly speaking, an \emph{alternating} flat-core $p$-elastica has segments and loops located alternatingly, with no loop touching an endpoint (see Definition \ref{def:alternating} and Figure \ref{fig:quasi-flat} (iii)).
The quasi-alternating class additionally allows segments to degenerate to points whenever the two neighboring loops are in a same direction (Figure \ref{fig:quasi-flat} (iv)).
Therefore, the alternating class can naturally be regarded as a generic subclass of the quasi-alternating class (see also Remark \ref{rem:simplex}).
\begin{figure}[htbp]
\centering
\includegraphics[width=120mm]{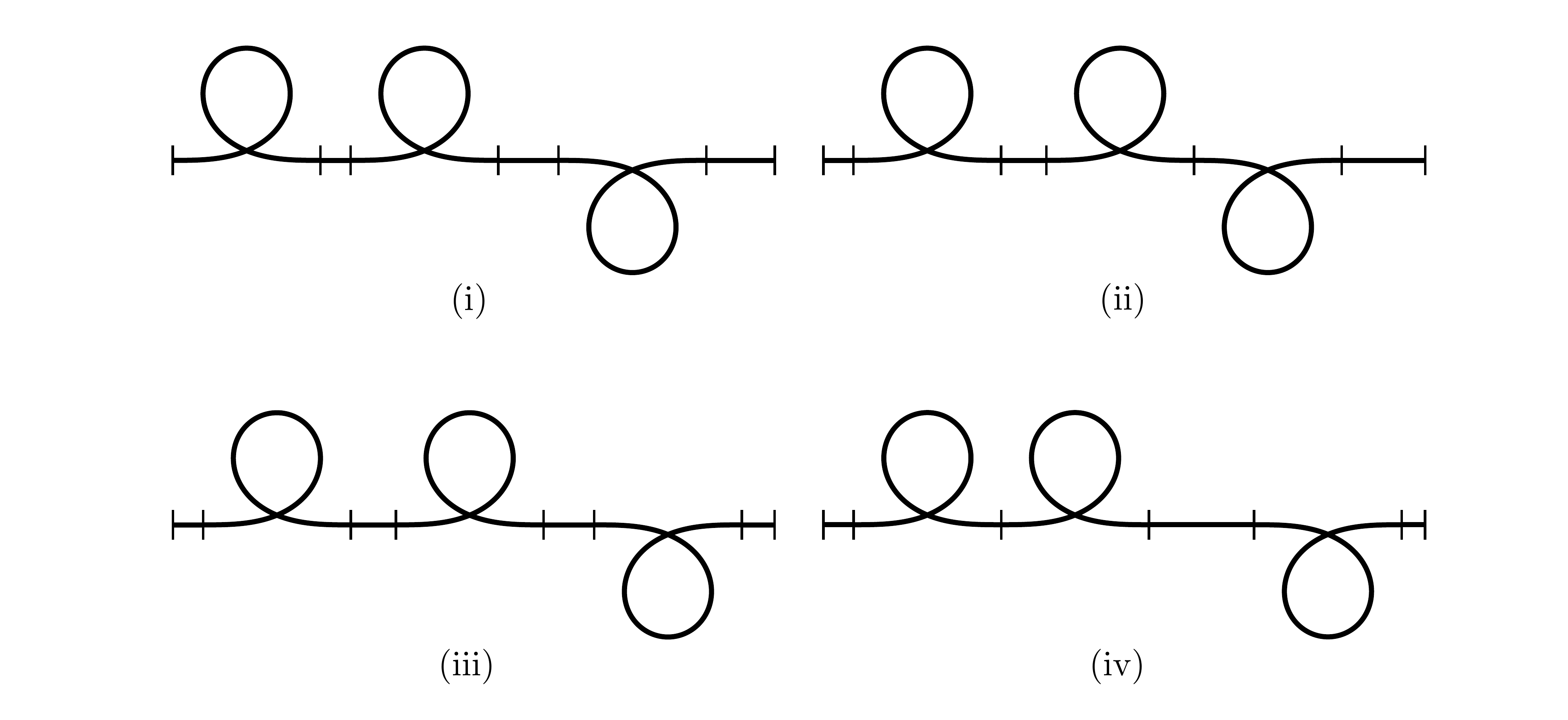} 
\caption{Examples of flat-core pinned $p$-elasticae.
(i), (ii) Unstable \cite{MY_Crelle}.
(iii) Stable (alternating, Theorem~\ref{thm:alternating_stability}).
(iv) Open (quasi-alternating, Problem~\ref{prob:quasi-open}).
}
\label{fig:quasi-flat}
\end{figure}

Among many others, we briefly mention some closely related results for reaction-diffusion equations involving the $p$-Laplacian and a logistic-type nonlinearity, originating from e.g.\ \cite{GV88, KV93} (see also \cite{HHH20} and references therein).
It is well known that those equations may have stationary solutions with flat parts if $p>2$.
Their (dynamical) stability is however not yet completely understood even in 1D, in particular in the sign-changing case (with many flat parts).
Since such solutions can be parametrized by the position of the transition layers, one can regard the parameter spaces as simplices \cite{GV88}*{Theorem 2.2}.
In terms of this simplex, Takeuchi--Yamada \cite{TY00} 
proved instability in a boundary case, and later Takeuchi \cite{Tak00} 
proved a partial stability in an interior case, restricting the neighborhood.
Full stability in the interior case is a long-standing open problem.
Our results may be regarded as certain variational counterparts, since the tangential angle of a flat-core (or borderline) elastica may be regarded as a transition layer (cf.\ \cite{Miura20} for $p=2$). In particular, our present result gives a full variational stability in the interior case.

We now explain the key idea for the proof of Theorem \ref{thm:alternating_stability}.
As mentioned, stability of $p$-elasticae cannot be tackled by using standard theory in contrast to the case of $p=2$, cf.\ \cite{Maddocks1981,Maddocks1984,Sac08_2,Sac08_3}.
(See also \cite{Rynne20,Rynne19} for similar subtlety in dynamical stability.)
Our proof proceeds in a nonstandard way; the local-minimality issue is reduced to a new (global) minimization problem subject to an auxiliary free boundary problem through a geometric relaxation procedure.
More precisely, we first cut an alternating flat-core pinned $p$-elastica at the top of each loop and also an interior point of each inner segment.
Then we prove that each piece of curve has an independent minimizing property even after a perturbation.
This implies the desired local minimality of the whole curve.
In this procedure, a straightforward choice of the boundary condition for each piece would be a kind of clamped boundary conditions (prescribing up to first order) in order to retain the admissibility of the whole curve.
However, for such a choice it is usually hard to detect minimizers and compute their energy, cf.\ \cite{Miura20}, even though there are explicit formulae for critical points.
Our main ingenuity here lies in the delicate choice of the relaxed boundary condition, which we call the \emph{hooked boundary condition}.
This choice turns out to be very well suited:
On one hand, it is so relaxed that we can use additional natural boundary conditions for restricting all critical points in ``computable'' forms.
Here we heavily rely on our previous classification theory \cite{MY_AMPA, MR4852697}, which describes $p$-elasticae in terms of $p$-elliptic functions introduced by the authors and of $p$-elliptic integrals introduced by Watanabe \cite{nabe14} with reference to Takeuchi's work \cite{Takeuchi_RIMS}.
Armed with those tools and resulting monotonicity properties, we can explicitly obtain unique global minimizers and represent their energy by complete $p$-elliptic integrals (Theorem \ref{thm:min_free_half}).
On the other hand, it is \emph{not} so relaxed that the minimality of the original curve cannot be recovered, even though the above relaxation allows our admissible set to include even discontinuous competitors (Lemma \ref{lem:relaxation}).
The last minimality is eventually reduced to an elementary argument involving Jensen's inequality, which however crucially reflects the effect of degeneracy.

It would be worth mentioning that the core of our relaxation technique is in the same spirit of some recent studies in different contexts, although the details are quite independent; an isoperimetric inequality for multiply-winding curves by \cite{MO21}, and on a Li--Yau type inequality in terms of the bending energy \cite{Miura_LiYau} as well as the $p$-bending energy \cite{MR4852697}.
The main characteristic here is that local minimality is reduced to global minimality.
This point is subtle, and accordingly our study does not cover the full quasi-alternating class, leaving the following problem open: 

\begin{problem}\label{prob:quasi-open}
    Is every quasi-alternating flat-core pinned $p$-elastica stable?
\end{problem}

This paper is organized as follows:
In Section~\ref{sect:preliminary} we prepare notation for $p$-elliptic functions and recall some known results for $p$-elasticae.
In Section~\ref{sect:hooked} we introduce the hooked boundary condition and classify all hooked $p$-elasticae (Theorem~\ref{thm:classify-hook}).
Uniqueness of minimal hooked $p$-elasticae will be also deduced.
In Section~\ref{sect:proof-MainResults} we complete the proof of Theorems~\ref{thm:main_stabilization} and \ref{thm:alternating_stability}.

\subsection*{Acknowledgements}
The first author is supported by JSPS KAKENHI Grant Numbers 20K14341, 21H00990, and 23H00085 and by Grant for Basic Science Research Projects from The Sumitomo Foundation.
The second author is supported by JSPS KAKENHI Grant Number 22K20339.

\section{Preliminary} \label{sect:preliminary}

In this section we first recall from \cite{MY_AMPA} the definitions and fundamental properties of $p$-elliptic integrals and functions, and also some known facts for planar $p$-elasticae.

Throughout this paper, let $e_1,e_2\in\R^2$ be the canonical basis, and $p\in(1,\infty)$ unless specified otherwise.
For an arclength parametrized planar curve $\gamma:[0,L]\to\R^2$, the function $\theta:[0,L]\to\R$ denotes the tangential angle $\partial_s\gamma=(\cos\theta,\sin\theta)$, and $k:[0,L]\to\R$ denotes the (counterclockwise) signed curvature $k=\partial_s\theta$.
In addition, $R_\phi$ denotes the counterclockwise rotation matrix through angle $\phi\in \R$.

\subsection{$p$-Elliptic integrals and functions}

Now we recall the definitions of $p$-elliptic integrals introduced in \cite{nabe14}.
Note that there are two types of generalizations, for example $\E_{1,p}$ and $\E_{2,p}$, but here we only use the first type such as $\E_{1,p}$.

\begin{definition} \label{def:K_p-E_p}
The \emph{incomplete $p$-elliptic integral of the first kind} $\mathrm{F}_{1,p}(x,q)$ of modulus $q\in[0,1)$ is defined for $x\in\R$ by 
\begin{align*}
&\mathrm{F}_{1,p}(x, q):=\int_0^{x} \frac{|\cos\phi|^{1-\frac{2}{p}} }{ \sqrt[]{1-q^2\sin^2\phi} }\,d\phi, 
\end{align*}
and also the corresponding \emph{complete $p$-elliptic integral} $\K_{1,p}(q)$ by 
\begin{align*}
    \K_{1,p}(q):=\mathrm{F}_{1,p}(\pi/2, q).
\end{align*}
For $q=1$, they are defined by
\[
\mathrm{F}_{1,p}(x,1):=\displaystyle \int_0^{x} \frac{d\phi}{|\cos \phi|^{\frac{2}{p} } }, \quad \text{where}\ 
\begin{cases}
x\in(-\frac{\pi}{2},\frac{\pi}{2}) \quad &\text{if} \ \ 1< p \leq  2,  \\
x\in\R &\text{if} \ \ p>2,
\end{cases}
\]
and
\begin{align*} 
\K_{1,p}(1)=\K_{p} (1):= 
\begin{cases}
\infty \quad &\text{if} \ \ 1< p \leq  2,  \\
\displaystyle \int_0^{\frac{\pi}{2}} \frac{d\phi}{(\cos \phi)^{\frac{2}{p} } } < \infty &\text{if} \ \ p>2.
\end{cases}
\end{align*}
Also, the \emph{incomplete $p$-elliptic integral of the second kind} $\E_{1,p}(x,q)$ of modulus $q\in[0,1]$ is defined for $x\in\R$ by 
\[
\E_{1,p}(x,q):=\int_0^{x} \sqrt{1-q^2 \sin^2 \phi}\, |\cos \phi|^{1-\frac{2}{p}}\,d\phi, 
\]
and also the corresponding \emph{complete $p$-elliptic integral} $\E_{1,p}(q)$  by 
\begin{align*}
    \E_{1,p}(q):=\E_{1,p}(\pi/2, q).
\end{align*}
\end{definition}

Note that, by definition and periodicity, for any $x\in \R$, $q\in[0,1)$, and $n\in \Z$, 
\begin{align}\label{eq:period_Ecn}
\begin{split}
\Ecn(x+ n\pi , q) &=  \Ecn(x , q) + 2n\Ecn(q), \\
\Fcn(x+ n\pi , q) &=  \Fcn(x , q) + 2n\Kcn(q).
\end{split}
\end{align}

The following lemma will play fundamental roles.

\begin{lemma}[{\cite{nabe14}*{Lemma 2}}]\label{lem:nabe-lem2}
Let $Q_p:[0,1)\to \R$ be defined by
\begin{align} \notag
Q_p(q):= 2 \frac{\E_{1,p}(q)}{\K_{1,p}(q)}-1 , \quad q\in[0,1).
\end{align}
Then $Q_p$ is strictly decreasing on $[0,1)$. 
Moreover, $Q_p$ satisfies $Q_p(0)=1$ and
\begin{align} \notag
\lim_{q\uparrow1}Q_p(q)=
\begin{cases}
-1 \quad & \text{if} \ \ 1<p \leq 2, \\
-\dfrac{1}{p-1} & \text{if} \ \ p>2.
\end{cases}
\end{align}
\end{lemma}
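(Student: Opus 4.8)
The plan is to prove strict monotonicity by differentiating $Q_p$ under the integral sign, and to obtain the two boundary values by separate limiting arguments. Throughout write $w(\phi)=|\cos\phi|^{1-\frac2p}$ and $D=D(\phi,q)=1-q^2\sin^2\phi$, so that
\[
\Kcn(q)=\int_0^{\pi/2}\frac{w(\phi)}{\sqrt D}\,d\phi,\qquad \Ecn(q)=\int_0^{\pi/2}w(\phi)\sqrt D\,d\phi,\qquad Q_p=\frac{2\Ecn(q)-\Kcn(q)}{\Kcn(q)}.
\]
For $q$ in any compact subinterval of $(0,1)$ one has $D\geq 1-q^2>0$, so both integrands and their $q$-derivatives are dominated by fixed integrable functions of $\phi$; hence differentiation under the integral sign is justified and $q\mapsto\Ecn(q),\Kcn(q)$ are $C^1$ on $(0,1)$.

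First I would compute, for $q\in(0,1)$,
\[
\Ecn'(q)=-q\int_0^{\pi/2}\frac{w\sin^2\phi}{\sqrt D}\,d\phi,\qquad \Kcn'(q)=q\int_0^{\pi/2}\frac{w\sin^2\phi}{D^{3/2}}\,d\phi,
\]
and note that, since $\frac{d}{dq}\frac{\Ecn}{\Kcn}=\frac{\Ecn'\Kcn-\Ecn\Kcn'}{\Kcn^2}$, the sign of $Q_p'$ equals that of the Wronskian-type quantity $\Ecn'\Kcn-\Ecn\Kcn'$. Substituting the two formulae gives
\[
\Ecn'\Kcn-\Ecn\Kcn'=-q\left[\Big(\int_0^{\pi/2}\tfrac{w\sin^2\phi}{\sqrt D}\,d\phi\Big)\Big(\int_0^{\pi/2}\tfrac{w}{\sqrt D}\,d\phi\Big)+\Big(\int_0^{\pi/2}w\sqrt D\,d\phi\Big)\Big(\int_0^{\pi/2}\tfrac{w\sin^2\phi}{D^{3/2}}\,d\phi\Big)\right],
\]
which is the product of $-q<0$ with a sum of products of strictly positive integrals, hence strictly negative for $q\in(0,1)$. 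Therefore $Q_p'<0$ on $(0,1)$, and since $Q_p$ is continuous on $[0,1)$ it is strictly decreasing there. Note this step is robust and avoids any delicate cancellation, in contrast to the classical Legendre-relation arguments.

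It remains to identify the endpoint behaviour. At $q=0$ we have $D\equiv1$, so $\Ecn(0)=\Kcn(0)=\int_0^{\pi/2}w\,d\phi\in(0,\infty)$ and thus $Q_p(0)=1$. For the limit $q\uparrow1$ I would pass to the limit inside the integrals: by monotone convergence $\Kcn(q)$ increases to $\int_0^{\pi/2}|\cos\phi|^{-2/p}\,d\phi$, while by dominated convergence (using $w\sqrt D\le w\in L^1$) $\Ecn(q)$ converges to $\int_0^{\pi/2}|\cos\phi|^{2-2/p}\,d\phi<\infty$. Since $|\cos\phi|^{-2/p}\sim(\tfrac\pi2-\phi)^{-2/p}$ near $\phi=\tfrac\pi2$, the limiting $\Kcn$-integral diverges precisely when $\tfrac2p\ge1$, i.e.\ $p\le2$. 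Hence for $1<p\le2$ we get $\Kcn(q)\to\infty$ with $\Ecn$ bounded, so $\Ecn/\Kcn\to0$ and $Q_p\to-1$. For $p>2$ both limits are finite Beta integrals, and evaluating them via $\int_0^{\pi/2}\cos^a\phi\,d\phi=\tfrac{\sqrt\pi}2\,\Gamma(\tfrac{a+1}2)/\Gamma(\tfrac a2+1)$ together with $\Gamma(x+1)=x\Gamma(x)$ collapses the ratio to
\[
\frac{\Ecn(1)}{\Kcn(1)}=\frac{\tfrac12-\tfrac1p}{1-\tfrac1p}=\frac{p-2}{2(p-1)},
\]
whence $Q_p\to 2\cdot\tfrac{p-2}{2(p-1)}-1=-\tfrac1{p-1}$, as claimed.

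I expect the only genuine obstacle to be the $q\uparrow1$ analysis: one must justify the interchange of limit and integral with care (the $\Kcn$-integrand blows up at $\phi=\tfrac\pi2$), correctly split the two regimes at the integrability threshold $2/p=1$, and carry out the Gamma-function reduction so as to land \emph{exactly} on $-1/(p-1)$ rather than some nearby constant. If one prefers to avoid explicit Beta integrals, the value $\Ecn(1)/\Kcn(1)$ can instead be extracted from an integration-by-parts identity relating $2\Ecn-\Kcn$ to a boundary term $\big[\sin\phi\cos^{1-2/p}\phi\sqrt D\big]_0^{\pi/2}$, which vanishes precisely in the degenerate range $p>2$; this gives the same conclusion and simultaneously clarifies the dichotomy at $p=2$.
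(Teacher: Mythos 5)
Your proof is correct. Note, however, that the paper does not prove this lemma at all: it is imported verbatim from Watanabe's work (cited as Lemma~2 of that reference), so there is no in-paper argument to compare against; what you have written is an independent, self-contained proof. The monotonicity step is sound and in fact slightly over-engineered: since $\Ecn'(q)\le 0$ and $\Kcn'(q)\ge 0$ with both strict on $(0,1)$, the negativity of $\Ecn'\Kcn-\Ecn\Kcn'$ is immediate, and your explicit Wronskian formula is just this observation written out; the domination needed to differentiate under the integral sign holds because $w(\phi)=|\cos\phi|^{1-2/p}\in L^1(0,\pi/2)$ for all $p>1$ and $D\ge 1-q_0^2$ on compact subsets of $[0,1)$. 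The endpoint analysis is also right: monotone convergence for $\Kcn$ and dominated convergence for $\Ecn$ correctly split the dichotomy at the integrability threshold $2/p=1$, and your Gamma-function reduction
\[
\frac{\Ecn(1)}{\K_p(1)}=\frac{\Gamma(\tfrac32-\tfrac1p)\,\Gamma(1-\tfrac1p)}{\Gamma(2-\tfrac1p)\,\Gamma(\tfrac12-\tfrac1p)}=\frac{\tfrac12-\tfrac1p}{1-\tfrac1p}=\frac{p-2}{2(p-1)}
\]
indeed yields $Q_p\to -1/(p-1)$ for $p>2$ (the Beta formula is applicable since $-2/p>-1$ exactly when $p>2$). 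The closing remark about an integration-by-parts alternative is only a sketch and is not needed; the main argument stands on its own.
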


Next we recall $p$-elliptic and $p$-hyperbolic functions introduced in \cite{MY_AMPA}.
Here we focus on those we will use later; for example, we define $\sech_p$ only for $p>2$ (see \cite{MY_AMPA}*{Definition 3.8} for $p\in(1,2]$).

\begin{definition} \label{def:cndn}
Let $ q \in[0, 1]$.
The \emph{amplitude function} $\amcn(x,q)$ with modulus $q$ is defined by the inverse functions of $\mathrm{F}_{1,p}(x, q)$, i.e., for $x\in\R$, 
\begin{align} \nonumber
x= \int_0^{\amcn(x,q)} \frac{|\cos\phi|^{1-\frac{2}{p}} }{ \sqrt[]{1-q^2\sin^2\phi} }\,d\phi.
\end{align}
The \emph{$p$-elliptic sine} $\sn_p(x,q)$ with modulus $q$ is defined by 
\begin{align}\notag
\sn_p (x, q):= \sin \amcn(x,q), \quad x\in \R.
\end{align}
The \emph{$p$-elliptic cosine} $\cn_p(x,q)$ with modulus $q$ is defined by 
\begin{align}\label{eq:cn_p}
\cn_p (x, q):= |\cos \amcn(x,q)|^{\frac{2}{p}-1} \cos \amcn(x,q), \quad x\in \R.
\end{align}
%
For $p>2$, the \emph{$p$-hyperbolic secant} $\sech_p x$ is defined by
\begin{align}\label{eq:sech_p} 
 \sech_p x:= 
\begin{cases}
 \cn_p(x,1), \quad & x\in (-\K_p(1), \K_p(1)), \\
 0, &x\in \R \setminus (-\K_p(1), \K_p(1)).
\end{cases}
\end{align}
Moreover, the \emph{$p$-hyperbolic tangent} $\tanh_p x$ is defined by
\[\tanh_p x:=\int_0^x (\sech_p t)^p dt, \quad x\in \R. \]
\end{definition}

\begin{proposition}[{\cite[Proposition 3.10]{MY_AMPA}}]\label{prop:property-cndn}
Let $\cn_p$ and $\sech_p$ be given by \eqref{eq:cn_p} and \eqref{eq:sech_p}, respectively. 
Then the following statements hold:
\begin{itemize}
\item [(i)] For $q\in[0,1)$, $\cn_p(\cdot, q)$ is an even $2\K_{1,p}(q)$-antiperiodic function on $\R$ and, in $[0, 2\K_{1,p}(q)]$, strictly decreasing from $1$ to $-1$. 
\item [(ii)] 
Let $p> 2$. 
Then $\sech_p $ is an even nonnegative function on $\R$, and strictly decreasing in $[0, \K_{p} (1))$. 
Moreover, $\sech_p 0=1$ and 
$\sech_p x \to 0$
as $x \uparrow \K_{p} (1)$.
In particular, $\sech_p $ is continuous on $\R$.
\end{itemize}
\end{proposition}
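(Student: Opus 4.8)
The plan is to reduce every assertion to elementary monotonicity properties of the amplitude function $\amcn(\cdot,q)$, which is by definition the inverse of $\Fcn(\cdot,q)$, combined with the single observation that the scalar map $\sigma(t):=\sgn(t)|t|^{2/p}$ is a strictly increasing odd homeomorphism of $\R$ (since $2/p>0$), satisfying $\sigma(\pm1)=\pm1$. Rewriting \eqref{eq:cn_p} as $\cn_p(x,q)=\sigma(\cos\amcn(x,q))$ will then let me transport all behavior of $\cos\amcn$ through $\sigma$ without loss of monotonicity or sign.

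First I would record the structure of $\Fcn(\cdot,q)$. For $q\in[0,1)$ the integrand $|\cos\phi|^{1-2/p}(1-q^2\sin^2\phi)^{-1/2}$ is positive, even, and $\pi$-periodic, so $\Fcn(\cdot,q)$ is a strictly increasing odd bijection of $\R$; hence $\amcn(\cdot,q)$ is well defined, strictly increasing, and odd. The period relation \eqref{eq:period_Ecn} gives $\Fcn(x+\pi,q)=\Fcn(x,q)+2\Kcn(q)$, which upon inversion yields $\amcn(y+2\Kcn(q),q)=\amcn(y,q)+\pi$; in particular $\amcn(\cdot,q)$ maps $[0,2\Kcn(q)]$ onto $[0,\pi]$. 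For part (i), as $x$ runs over $[0,2\Kcn(q)]$ the angle $\amcn(x,q)$ increases from $0$ to $\pi$, so $\cos\amcn(x,q)$ strictly decreases from $1$ to $-1$, and composing with the increasing $\sigma$ shows $\cn_p(\cdot,q)$ strictly decreases from $1$ to $-1$. Evenness follows from oddness of $\amcn$ via $\cos(-\amcn)=\cos\amcn$, while antiperiodicity follows from $\amcn(x+2\Kcn(q),q)=\amcn(x,q)+\pi$ together with $\cos(\amcn(x,q)+\pi)=-\cos\amcn(x,q)$ and the oddness of $\sigma$.

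For part (ii) I specialize to $q=1$, where the integrand reduces to $|\cos\phi|^{-2/p}$. The feature distinguishing $p>2$ is that this singularity is integrable (the exponent $-2/p>-1$), so $\Kcn(1)=\Fcn(\pi/2,1)<\infty$ and $\Fcn(\cdot,1)$ maps $[0,\pi/2)$ homeomorphically onto $[0,\Kcn(1))$. Consequently $\amcn(\cdot,1)$ maps $[0,\Kcn(1))$ onto $[0,\pi/2)$, so on this interval $\cos\amcn(x,1)\in(0,1]$ and $\cn_p(x,1)=(\cos\amcn(x,1))^{2/p}$ is positive, strictly decreasing from $1$, and tends to $0$ as $x\uparrow\Kcn(1)$. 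Oddness of $\amcn$ gives evenness on $(-\Kcn(1),\Kcn(1))$, and the piecewise definition \eqref{eq:sech_p} extends $\sech_p$ by $0$ symmetrically, producing the stated nonnegativity, monotonicity on $[0,\Kcn(1))$, and the boundary values $\sech_p 0=1$ and $\sech_p x\to0$.

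The main obstacle I anticipate is the boundary analysis at $q=1$: one must verify $\Fcn(\pi/2,1)<\infty$ exactly in the regime $p>2$, confirm that $\amcn(\cdot,1)$ remains a genuine homeomorphism up to the endpoint, and then check that $\cn_p(x,1)\to0$ as $x\uparrow\Kcn(1)$, so that the two pieces of \eqref{eq:sech_p} glue to a continuous function at $x=\pm\Kcn(1)$ (continuity on the open interval being automatic from composition). Everything else is routine once the monotonicity of $\sigma$ and the period relation \eqref{eq:period_Ecn} are in hand.
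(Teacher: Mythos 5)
Your proof is correct. Note, however, that the paper itself contains no proof of this proposition --- it is imported verbatim from \cite{MYarXiv2203}*{Proposition 3.10} --- so there is no in-paper argument to compare against; your reduction of everything to the strict monotonicity and oddness of $\amcn(\cdot,q)$ (as the inverse of $\Fcn(\cdot,q)$), transported through the increasing odd homeomorphism $t\mapsto\sgn(t)|t|^{2/p}$, together with the integrability check $-2/p>-1$ at $q=1$ for $p>2$, is exactly the natural direct verification from Definition~\ref{def:cndn} and matches the approach of the cited reference.
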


In particular, this with $\cn_p(\Kcn(q),q)=0$ implies that
\begin{align}\label{eq:Z_pq}
Z_{p,q}:=\Set{x\in \R | \cn_p(x,q)=0} = 
(2\Z+1)\Kcn(q).
\end{align}

\subsection{$p$-Elasticae}
In this paper we call an arclength parametrized planar curve \emph{$p$-elastica} if its signed curvature $k:[0,L]\to\R$ (parametrized by the arclength) belongs to $L^\infty(0,L)$ and if there is $\lambda\in\R$ such that $k$ satisfies
\begin{align}\label{eq:EL}\tag{EL}
\int_0^L \Big( p |k|^{p-2}k \vp'' +(p-1) |k|^pk\vp -\lambda k \vp \Big) ds = 0
\end{align}
for any $\varphi \in C^\infty_{\rm c}(0,L)$.
Equation \eqref{eq:EL} is a weak form of \eqref{eq:p-elastica} and appears as the Euler--Lagrange equation for critical points of $\mathcal{B}_p$ under the fixed-length constraint.

As opposed to $p=2$, the $p$-elasticae are not necessarily smooth.
However at least the curvature is continuous, and also the curvature to a suitable power has more regularity enough to solve an 
Euler--Lagrange equation in the classical sense.

\begin{proposition}[{\cite{MY_AMPA}*{Theorem 1.7 and Lemma~4.3}}]\label{prop:regularity-w}
    Let $p\in(1,\infty)$.
    If an arclength parametrized curve $\gamma:[0,L]\to\R^2$ is a $p$-elastica, then $\gamma$ has the signed curvature $k$ such that $k\in C([0,L])$. 
    In addition, $w:=|k|^{p-2}k\in C^2([0,L])$ and
    \begin{align}\label{eq:0610-1}
    p w'' + (p-1)|w|^{\frac{2}{p-1}}w - \lambda |w|^{\frac{2-p}{p-1}}w =0 \quad \text{in} \ [0,L].
    \end{align}
\end{proposition}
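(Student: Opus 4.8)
The plan is to establish the two assertions --- continuity of $k$ and the $C^2$-regularity of $w:=|k|^{p-2}k$ together with the classical ODE \eqref{eq:0610-1} --- by bootstrapping from the weak formulation \eqref{eq:EL}. First I would rewrite \eqref{eq:EL} in terms of the unknown $w$, observing that $k=|w|^{\frac{2-p}{p-1}}w$ (the inverse of the map $k\mapsto|k|^{p-2}k$) so that $|k|^{p-2}k=w$ and $|k|^pk=|w|^{\frac{2}{p-1}}w$. With this substitution, \eqref{eq:EL} becomes a weak equation for $w$ of the form
\begin{equation}\notag
\int_0^L\Big(p\,w\,\vp''+(p-1)|w|^{\frac{2}{p-1}}w\,\vp-\lambda|w|^{\frac{2-p}{p-1}}w\,\vp\Big)\,ds=0\quad\text{for all }\vp\in C^\infty_{\rm c}(0,L).
\end{equation}
Since $k\in L^\infty(0,L)$ by hypothesis, $w\in L^\infty(0,L)$ as well, and therefore the lower-order terms $|w|^{\frac{2}{p-1}}w$ and $|w|^{\frac{2-p}{p-1}}w$ both lie in $L^\infty(0,L)\subset L^1(0,L)$.

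The heart of the argument is the first term: the identity above says exactly that the distributional second derivative $pw''$ equals the $L^\infty$-function $g:=-(p-1)|w|^{\frac{2}{p-1}}w+\lambda|w|^{\frac{2-p}{p-1}}w$. Thus $w''=g/p\in L^\infty(0,L)$ in the distributional sense, which immediately gives $w\in W^{2,\infty}(0,L)$, hence $w\in C^1([0,L])$ with $w'$ Lipschitz. Now I would bootstrap: once $w\in C^1$, the curvature $k=|w|^{\frac{2-p}{p-1}}w$ is a continuous function of $w$ --- here one checks that the map $r\mapsto|r|^{\frac{2-p}{p-1}}r$ is continuous on $\R$ (the exponent $\frac{2-p}{p-1}>-1$ keeps it continuous through the origin) --- so $k\in C([0,L])$, proving the first claim. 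Consequently $g$ is itself continuous, and $w''=g/p$ shows $w\in C^2([0,L])$ and that \eqref{eq:0610-1} holds pointwise in the classical sense.

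The main obstacle I anticipate is the delicate behavior at the \emph{degeneracy set} $\{k=0\}=\{w=0\}$, where the nonlinear exponents interact with the regularity. Away from this set everything is smooth by standard elliptic regularity for the ODE $pw''=g$, so the real content is to verify that the continuity of $k$ and the validity of \eqref{eq:0610-1} persist across zeros of $w$. This is precisely where the choice of the power $w=|k|^{p-2}k$ rather than $k$ itself is essential: while $k$ need not be differentiable where it vanishes (reflecting the generic loss of regularity mentioned in the text), the quantity $w$ absorbs the singular/degenerate scaling so that $w''$ stays bounded and continuous. I would therefore pay careful attention to the continuity of the two nonlinear maps $r\mapsto|r|^{\frac{2}{p-1}}r$ and $r\mapsto|r|^{\frac{2-p}{p-1}}r$ at $r=0$, confirming in particular that the second exponent satisfies $\frac{2-p}{p-1}+1=\frac{1}{p-1}>0$ so that no blow-up occurs. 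Since the proposition is quoted from \cite{MYarXiv2203}, I expect the published proof to track these exponent conditions explicitly and to handle the endpoint regularity on $[0,L]$ by the same $W^{2,\infty}$ argument extended up to the boundary.
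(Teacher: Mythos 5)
Your bootstrap is correct and is essentially the standard argument: the paper itself gives no proof of this proposition (it is imported verbatim from \cite{MYarXiv2203}*{Theorem 1.7 and Lemma~4.3}), and the cited reference proceeds exactly as you do --- rewrite \eqref{eq:EL} as a distributional identity $pw''=-(p-1)|w|^{\frac{2}{p-1}}w+\lambda|w|^{\frac{2-p}{p-1}}w$ with right-hand side in $L^\infty$, conclude $w\in W^{2,\infty}\subset C^1([0,L])$, recover $k=|w|^{\frac{2-p}{p-1}}w\in C([0,L])$ since $\frac{2-p}{p-1}+1=\frac{1}{p-1}>0$, and then upgrade to $w\in C^2$ with \eqref{eq:0610-1} holding classically. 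Your exponent checks at the degeneracy set are precisely the points that need verifying, so no gap remains.
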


Throughout this paper we also crucially use our explicit formulae for $p$-elasticae, focusing on those with vanishing curvature.
To describe them, we introduce the following notation on a concatenation of curves.
For $\gamma_j:[a_j,b_j]\to\R^2$ with $L_j:=b_j-a_j\geq0$, we define $\gamma_1\oplus\gamma_2:[0,L_1+L_2]\to\R^2$ by
\begin{align*}
  (\gamma_1\oplus\gamma_2)(s) :=
  \begin{cases}
    \gamma_1(s+a_1), \quad & s \in[0, L_1], \\
    \gamma_2(s+a_2-L_1) +\gamma_1(b_1)-\gamma_2(a_2),  & s \in[L_1,L_1+L_2].
  \end{cases}
\end{align*} 
We inductively define $\gamma_1\oplus\dots\oplus\gamma_N := (\gamma_1\oplus\dots\oplus\gamma_{N-1})\oplus\gamma_N$.
We also write
\begin{align}\notag
     \bigoplus_{j=1}^N\gamma_j:=\gamma_1\oplus\dots\oplus\gamma_N.
\end{align}

\begin{proposition}[\cite{MY_AMPA}*{Theorems~1.2, 1.3}]\label{prop:MY2203-thm1.1}
  Let $L>0$ and $\gamma\in W^{2,p}_{\rm arc}(0,L;\R^2)$ be a $p$-elastica whose signed curvature $k$ has a zero in $[0,L]$. 
  Then, up to similarity (i.e., translation, rotation, reflection, and dilation) and reparametrization, the curve $\gamma$ is represented by $\gamma(s)=\gamma_*(s+s_0)$ with some $s_0\in\R$, where $\gamma_*:\R\to\R^2$ is one of the arclength parametrized curves $\gamma_\ell$, $\gamma_w$, $\gamma_f$ defined as follows:
   \begin{itemize}
     \item \textup{(Linear $p$-elastica)}
     $\gamma_\ell(s):=(s,0)$.
     \item \textup{(Wavelike $p$-elastica)}
     For some $q \in (0,1)$,
     \begin{equation}\label{eq:EP2}
       \gamma_w(s):=\gamma_w(s,q) =
       \begin{pmatrix}
       2 \E_{1,p}(\am_{1,p}(s,q),q )-  s  \\
       -q\frac{p}{p-1}|\cn_p(s,q)|^{p-2}\cn_p(s,q)
       \end{pmatrix}.
     \end{equation}
     In this case, the tangential angle is given by $\theta_w(s)=2\arcsin(q\sn_p(s,q))$ and the signed curvature by $k_w(s) = 2q\cn_p(s,q).$
       \item \textup{(Flat-core $p$-elastica: $p>2$)}
    For some integer $N\geq1$, signs $\sigma_1,\dots,\sigma_N\in\{+,-\}$, and nonnegative numbers $L_1,\dots,L_N\geq0$,
    \begin{equation}\notag
      \gamma_f := \bigoplus_{j=1}^N (\gamma_\ell^{L_j}\oplus \gamma_b^{\sigma_j}),
    \end{equation}
    where $\gamma_b^\pm:[-\K_p(1),\K_p(1)]\to\R^2$ and $\gamma_\ell^{L_j}:[0,L_j]\to\R^2$ are defined by
    \begin{align}\notag
      \gamma_b^{\pm}(s) =
      \begin{pmatrix}
      2 \tanh_p{s} - s  \\
      \mp \frac{p}{p-1}(\sech_p{s})^{p-1}
      \end{pmatrix},
      \quad
      \gamma_\ell^{L_j}(s) =
      \begin{pmatrix}
      -s  \\
      0
      \end{pmatrix}.
    \end{align}
    The curves $\gamma_b^{\pm}(s)$ have the tangential angles $\theta_b^\pm(s)=\pm2\am_{1,p}(s,1)$ and the signed curvatures $k_b^\pm(s) = \pm2\sech_p{s}$ for $s\in [-\K_p(1),\K_p(1)]$.
    In particular, the signed curvature of $\gamma_f$ is given by
         \begin{align}\label{eq:flat-type}
                k_f(s)=\sum_{j=1}^N\sigma_j 2\sech_p(s-s_j), \quad \text{where}\quad  s_j=(2j-1)\K_p(1) + \sum_{i=1}^j L_i.     
         \end{align}
   \end{itemize}
\end{proposition}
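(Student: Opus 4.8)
The plan is to reduce the weak characterization \eqref{eq:EL} to an autonomous ODE for the curvature and to integrate it explicitly. By Proposition \ref{prop:regularity-w} every $p$-elastica has continuous curvature, and $w:=|k|^{p-2}k\in C^2([0,L])$ solves \eqref{eq:0610-1} classically, so I may work with a genuine second-order ODE rather than the distributional form. The first step is to produce a first integral: multiplying \eqref{eq:0610-1} by $w'$ and integrating gives the conserved quantity
\[
\frac{p}{2}(w')^2 + \frac{(p-1)^2}{2p}|w|^{\frac{2p}{p-1}} - \lm\frac{p-1}{p}|w|^{\frac{p}{p-1}} = E
\]
for some $E\in\R$. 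Denoting the two potential terms by $V(w)$, the task becomes the classical one of classifying the trajectories of a one-dimensional particle in the even potential $V$.

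The hypothesis that $k$, hence $w$, has a zero $s_0$ fixes the energy level: evaluating the first integral at $s_0$ gives $E=\tfrac p2 w'(s_0)^2\ge0$. Writing $u:=|w|^{p/(p-1)}\ge0$, one has $V=\tfrac{p-1}{p}\bigl(\tfrac{p-1}{2}u^2-\lm u\bigr)$, an upward parabola in $u$ with $V(0)=0$, so the phase portrait is entirely governed by $\sgn\lm$ and the value of $E$. I would then split into cases according to $E$. If $E>0$, the particle has strictly positive kinetic energy at $w=0$, so it crosses zero transversally and, $V$ being even and coercive, oscillates between symmetric turning points $\pm w_{\max}$: this is the wavelike family. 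If $E=0$ and $w\equiv0$, we obtain the linear elastica. If $E=0$ but $w\not\equiv0$, the level $w=0$ is attained with $w'=0$, forcing a homoclinic-type excursion, which as explained below is possible only for $p>2$ and yields the flat-core family. (Solutions never crossing $w=0$ correspond to $E<0$ and to curvature of one sign; these are exactly the orbitlike $p$-elasticae excluded by our hypothesis, which reassuringly do not appear.)

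To obtain the explicit formulae in the case $E>0$, the equation $(w')^2=\tfrac2p\bigl(E-V(w)\bigr)$ is separable, and the substitution matching the integrand of $\Fcn$ in Definition \ref{def:K_p-E_p} turns $\int dw/\sqrt{E-V}$ into an incomplete $p$-elliptic integral of the first kind. Inverting through the amplitude $\amcn$ and normalizing the amplitude by a dilation yields $k=2q\cn_p(\cdot,q)$ for a modulus $q\in(0,1)$, the degenerate endpoint $q=1$ giving the single loop $k=\pm2\sech_p$ via $\sech_p=\cn_p(\cdot,1)$ and Proposition \ref{prop:property-cndn}. The curve itself is then recovered by integrating $\theta=\theta_0+\int k\,ds$ and $\gamma=\gamma_0+\int(\cos\theta,\sin\theta)\,ds$; the identity $\tfrac{d}{ds}\Ecn(\amcn(s,q),q)=1-q^2\sn_p(s,q)^2$ identifies the first component with $2\Ecn(\amcn(s,q),q)-s$, matching \eqref{eq:EP2}, while differentiating the proposed second component reproduces $\sin\theta_w$. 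The similarity group and the freedom in $\theta_0,\gamma_0,s_0$ absorb all remaining constants.

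The main obstacle is the degenerate case $p>2$, $E=0$. Near $w=0$ the reduced first-order equation reads $|w'|\sim c\,|w|^{p/(2(p-1))}$, and the exponent $p/(2(p-1))$ is strictly less than $1$ precisely when $p>2$; thus the Osgood/Lipschitz condition fails at $w=0$ and uniqueness for \eqref{eq:0610-1} breaks down there. This is exactly what allows a solution to rest at $w\equiv0$ on an arbitrary interval (a flat segment $\gamma_\ell^{L_j}$) and then detach into a homoclinic loop $\gamma_b^{\sigma_j}$ of fixed arclength $2\K_p(1)$, with $\sigma_j$ recording the sign chosen upon leaving $0$. The delicate point is to prove that \emph{every} $E=0$ solution decomposes globally into finitely many such flat segments and loops---i.e.\ that on each maximal interval where $w\neq0$ the trajectory is a translate of $\pm2\sech_p$, and that distinct loops can only be joined across a flat core---after which concatenation via $\oplus$ and a direct reading of the curvature give \eqref{eq:flat-type} and the curve $\gamma_f$. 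For $p\in(1,2]$ the same exponent is $\ge1$, the Osgood condition holds, flat cores cannot form, and only the linear and wavelike families remain, consistent with $\gamma_f$ being listed only for $p>2$.
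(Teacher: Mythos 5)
This proposition carries no proof in the paper: it is imported verbatim from \cite{MYarXiv2203}*{Theorems 1.2, 1.3}, so there is no internal argument to compare against. Your sketch reconstructs the route the cited work actually takes: Proposition \ref{prop:regularity-w} upgrades the weak equation to the classical ODE \eqref{eq:0610-1}; multiplying by $w'$ gives the first integral you write (your coefficients check out); the hypothesis that $k$ vanishes somewhere pins the energy to $E=\tfrac p2 w'(s_0)^2\ge0$ and so excludes the orbitlike family; and the $p$-elliptic functions of Definition \ref{def:cndn} are designed precisely so that the quadrature $\int dw/\sqrt{E-V}$ inverts to $\cn_p(\cdot,q)$ when $E>0$ and to $\sech_p$ when $E=0$. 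Your diagnosis of the flat-core mechanism --- the exponent $\tfrac{p}{2(p-1)}<1$ exactly when $p>2$, so the homoclinic orbit reaches $w=0$ in finite arclength and uniqueness for the Cauchy data $w(s_0)=w'(s_0)=0$ fails --- is the correct one, and the same computation shows that for $p\le2$ a curvature zero on an $E=0$ orbit forces $w\equiv0$, i.e.\ the linear case.

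Two caveats. First, the step you yourself flag as ``the delicate point'' is where essentially all the work of the degenerate case lives, and you do not carry it out: one must show that on each maximal interval where $w\neq0$ the $E=0$ trajectory is a full translated copy of the $\sech_p$-profile of arclength exactly $2\K_p(1)$ after normalization (not a piece that returns to zero early), that only finitely many such excursions fit into $[0,L]$, and that the signs $\sigma_j$ and the lengths $L_j\ge0$ of the intervening zero sets are otherwise arbitrary; this is the content of the Cauchy-problem classification (\cite{MYarXiv2203}*{Theorem 4.1}) that the present paper invokes elsewhere. Second, your parenthetical attributing all sign-definite solutions to $E<0$ is slightly off: the borderline $p$-elasticae for $p\in(1,2]$ have $E=0$ and never vanish (the homoclinic orbit needs infinite arclength to reach $w=0$ when the exponent is $\ge1$), so they are excluded by the vanishing-curvature hypothesis rather than by the sign of $E$, exactly as the remark following the proposition states. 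Neither point derails the outline, but as written the proposal is a correct blueprint of the cited proof rather than a complete argument.
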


\begin{remark}
    The vanishing curvature condition rules out orbitlike and borderline $p$-elasticae obtained in \cite{MY_AMPA}*{Theorems~1.2, 1.3}.
\end{remark}

\begin{remark}\label{rem:loop}
    The curve $\gamma^+_b$ looks like a (finite) loop as in Figure \ref{fig:gamma_b_v2}.
    In particular, the tangential angle $\theta_b^+:[-\K_p(1),\K_p(1)]$ is strictly monotone from $-\pi$ to $\pi$.
    By \cite{MY_AMPA}*{Lemma~5.7} and symmetry we also deduce that $\gamma_b^+(\pm\K_p(1))=\mp\frac{\K_p(1)}{p-1}e_1$.
\end{remark}

\begin{figure}[htbp]
\centering
\includegraphics[width=60mm]{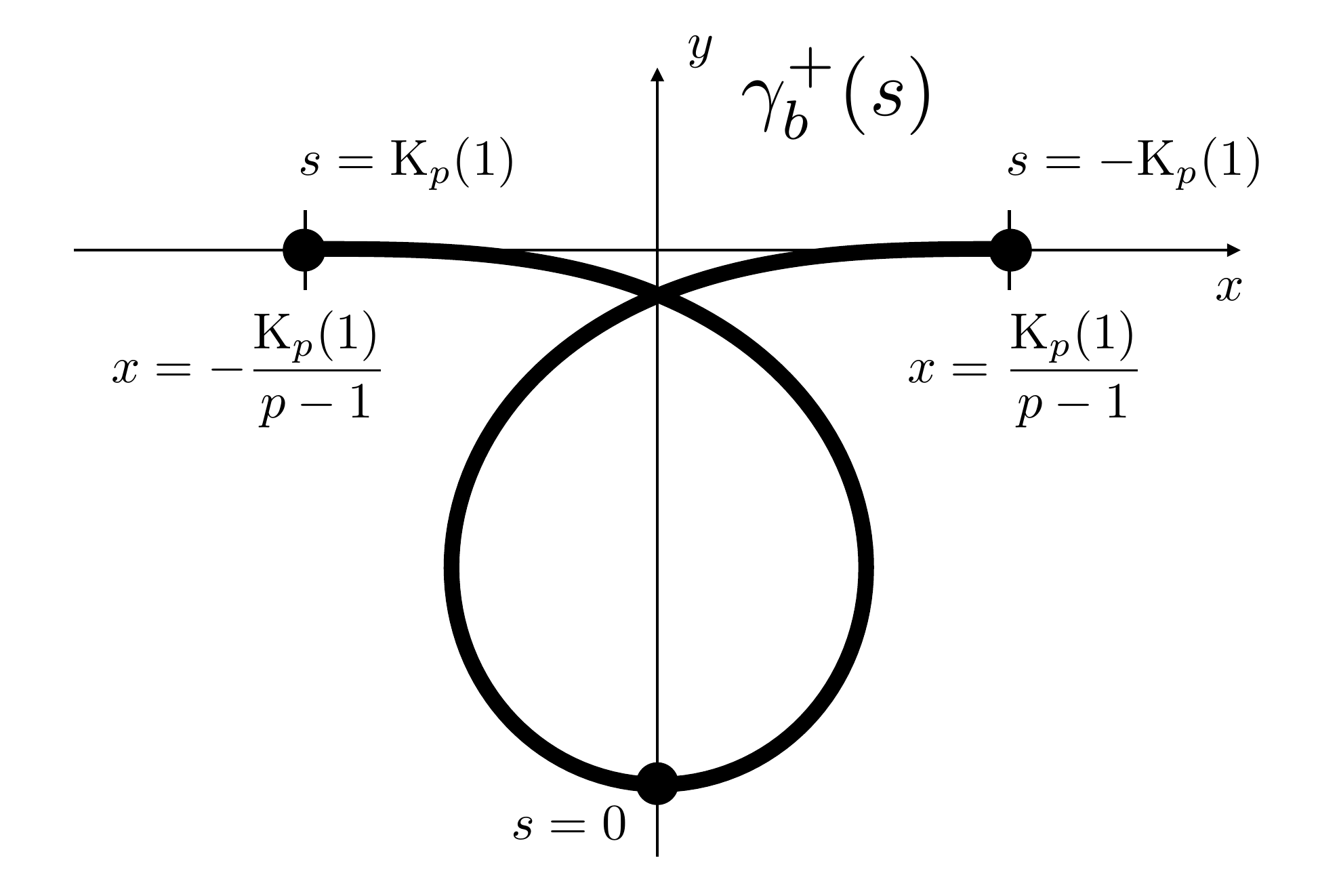} 
\caption{The profile of the loop $\gamma^+_b$.}
\label{fig:gamma_b_v2}
\end{figure}

In addition,  we call $\gamma\in\Ap$ a \emph{pinned $p$-elastica} if $\gamma$ is a $p$-elastica with $k(0)=k(L)=0$.
This is the standard first-order necessary condition to be a local minimizer of $\B_p$ in $\Ap$ (see \cite{MR4852697} for details).

By \cite{MR4852697}*{Theorem~1.1} we know that there are flat-core pinned $p$-elasticae.
To be more precise, suppose that $p>2$ and $|P_1-P_0|\in[\frac{1}{p-1}L,L)$. 
Then $\gamma\in\Ap$ is a pinned $p$-elastica if there are $N\in \N$, $r\in[\frac{1}{p-1},1)$, $\boldsymbol{\sigma}=(\sigma_1, \ldots, \sigma_N) \in \{+,-\}^N$, and $\boldsymbol{L}=(L_1, \ldots, L_{N+1}) \in [0,\infty)^{N+1}$ such that, up to similarity and reparametrization, the curve $\gamma$ is given by
    \begin{align} \label{eq:N-loop-flat-core}
        \gamma_{\rm flat}:=\bigg( \bigoplus_{j=1}^N \big( \gamma_{\ell}^{L_j} \oplus \gamma_{b}^{\sigma_j} \big) \bigg) \oplus \gamma_{\ell}^{L_{N+1}}, 
    \end{align}
and in addition, the numbers $p, r, N$ and $\boldsymbol{L}$ satisfy  
    \begin{align} \label{eq:sum-flatparts}
    \sum_{j=1}^{N+1}L_j = 2N\frac{r-\frac{1}{p-1}}{1-r} \K_{p}(1).
    \end{align}
Notice that the length $\bar{L}$ of $\gamma_\mathrm{flat}$ is given by $\bar{L}=2N\K_p(1)+\sum_{j=1}^{N+1} L_j$, and
\begin{gather}\label{eq:flcr-property}
\begin{split}
    \gamma_{\rm flat} (\bar{L}) - \gamma_{\rm flat}(0) &= -\bigg(\frac{2N}{p-1}\K_{p}(1) + \sum_{j=1}^{N+1} L_j \bigg)e_1,\\
    \gamma_{\rm flat}'(\bar{L})=\gamma_{\rm flat}'(0)&=-e_1.
\end{split}
\end{gather}
In particular, since $\gamma\in\Ap$, we need to have $r=\frac{|P_0-P_1|}{L}$.
On the other hand, $\boldsymbol{\sigma}$ is arbitrary, and also $N$ and $\boldsymbol{L}$ are arbitrary whenever \eqref{eq:sum-flatparts} holds.



Finally we recall the definition of the class of alternating flat-core $p$-elasticae introduced in \cite{MY_Crelle}*{Section~6.3}.

\begin{definition}[Alternating flat-core]\label{def:alternating}
Let $p>2$, $\frac{1}{p-1}L<|P_1-P_0|<L$, and $N\in \N$.
We call $\gamma\in \Ap$ an \emph{$N$-loop alternating flat-core $p$-elastica} if, up to similarity and reparametrization, the curve $\gamma$ is of the form \eqref{eq:N-loop-flat-core} for $
r=\frac{|P_0-P_1|}{L}$, for some $\boldsymbol{\sigma}=(\sigma_1, \ldots, \sigma_N) \in \{+,-\}^N$, and for some strictly positive numbers $\boldsymbol{L}=(L_1, \ldots, L_{N+1}) \in (0,\infty)^{N+1}$ satisfying \eqref{eq:sum-flatparts}.
 \end{definition}

Thanks to the strict positivity of $\boldsymbol{L}$, any alternating flat-core $p$-elastica has the segments and the loops alternately (Figure \ref{fig:quasi-flat} (iii)).
Recall that this point is very delicate and important in view of stability --- 
in fact, by \cite[Section 6.3]{MY_Crelle} if either
\begin{itemize}
    \item $L_1=0$ or $L_{N+1}=0$ (Figure \ref{fig:quasi-flat} (i)), or
    \item there is $1<j<N+1$ such that $L_j=0$ and $\sigma_{j-1} \neq \sigma_j$ (Figure \ref{fig:quasi-flat} (ii)),
\end{itemize}
then the corresponding curve is unstable under the pinned boundary condition.

\begin{remark}\label{rem:threshold-stability}
The equality case $\frac{L}{p-1}=|P_1-P_0|$ is slightly delicate in view of stability.
In this case, the set $\Ap$ does admit flat-core pinned $p$-elasticae, but each of them needs to have a loop touching an endpoint, thus unstable by \cite{MY_Crelle}*{Proposition 6.3}.
This with \cite{MY_Crelle}*{Corollaries 2.12 and 2.14} implies Theorem \ref{thm:previous_rigidity}.
\end{remark}

\begin{remark}\label{rem:simplex}
    Relation \eqref{eq:sum-flatparts} forms an $N$-simplex.
    Hence if $|P_0-P_1|>\frac{L}{p-1}$, the space of flat-core pinned $p$-elasticae in $\Ap$ can be written as a disjoint union $\bigcup_{N=1}^\infty E_N$, where each $E_N$ is isomorphic to $\{-1,1\}^N\times\Delta^N$.
    Here $\Delta^N$ stands for the standard $N$-simplex.
    From this point of view, the alternating class corresponds to the interior part of $\Delta^N$.
    The quasi-alternating class additionally contains a part of the boundary $\partial\Delta^N$.
    The remaining part consists of unstable solutions.
\end{remark}


\section{Hooked $p$-elasticae}\label{sect:hooked}

Given $p\in(1,\infty)$ and $0<\ell<L$, we define a class of curves subject to a free boundary condition, which we call the \emph{hooked boundary condition}, by 
\begin{align*}
    \begin{split}
            \Ah &= \Ah(\ell,L)\\
            &:=\Set{ \gamma \in W^{2,p}_\mathrm{arc}(0,L; \R^2) | (\gamma(L)-\gamma(0))\cdot e_1 =\ell,\ \gamma'(L)=-e_1
            }.
    \end{split}
\end{align*}
As explained in the introduction, we will decompose an alternating flat-core $p$-elastica into a finite family of curves in $\Ah$ in order to reduce our stability problem to (global) minimization problems for those curves.

In this paper we define hooked $p$-elasticae as follows: 

\begin{definition}[Hooked $p$-elastica]\label{def:hooked-p-elastica*}
    We call $\gamma \in \Ah$ a \emph{hooked $p$-elastica} if $\gamma$ is a $p$-elastica with curvature $k$ such that the function $w:=|k|^{p-2}k\in C^2([0,L])$ satisfies $w(0)=w'(L)=0$.
\end{definition}

The above definition is, as in the pinned case, the natural first-order necessary condition to be a (local) minimizer of $\mathcal{B}_p$ in $\Ah$.
We postpone detailed arguments for this fact to Appendix~\ref{sect:appendix-A} since we can argue similarly to our previous study of pinned $p$-elasticae \cite{MR4852697} up to minor modifications.
However, we stress that the condition $w'(L)=0$ appears (not in the pinned case but) only in the hooked case and makes the derivation slightly more delicate, but plays a very important role in our reduction process.

In the following, we first classify all the possible hooked $p$-elasticae in Section~\ref{sect:classify-hooked}. 
We then prove unique existence of minimal hooked $p$-elasticae and obtain the explicit minimal energy in Section~\ref{sect:minimial-hook}.

\subsection{Classification for hooked $p$-elasticae}\label{sect:classify-hooked}

In this subsection we will deduce classification of hooked $p$-elasticae. 
Combining definition of hooked $p$-elasticae with our previous classification result, we obtain the following 

\begin{lemma}\label{lem:hooked-dichotomy}
Let $\gamma \in \Ah$ be a hooked $p$-elastica.
Then $\gamma$ is either a wavelike $p$-elastica or a flat-core $p$-elastica.
In addition, the signed curvature $k$ satisfies the following additional boundary condition:
\begin{align}\label{eq:additional-BC}
    k(0)=0, \quad k(L)\neq0, \quad k'(L)=0.
\end{align}
\end{lemma}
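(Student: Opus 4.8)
The plan is to read off the conclusion from the defining conditions $w(0)=w'(L)=0$ together with the explicit classification in Proposition~\ref{prop:MY2203-thm1.1}. Since $w=|k|^{p-2}k$, the relation $w(0)=0$ is equivalent to $k(0)=0$; this already gives the first identity in \eqref{eq:additional-BC} and, crucially, shows that the curvature of $\gamma$ vanishes somewhere. Proposition~\ref{prop:MY2203-thm1.1} then applies, so up to similarity and reparametrization $\gamma$ is linear, wavelike, or flat-core. To discard the linear case I would use the hooked boundary data: a linear $p$-elastica has constant unit tangent, so $\gamma'(L)=-e_1$ forces $\gamma'\equiv-e_1$ and hence $(\gamma(L)-\gamma(0))\cdot e_1=-L<0$, contradicting $(\gamma(L)-\gamma(0))\cdot e_1=\ell>0$. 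This leaves precisely the wavelike and flat-core alternatives, giving the dichotomy.

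It then remains to analyze the boundary at $s=L$, the core of which is the claim $k(L)\neq0$. In the wavelike case I would argue by direct computation. Writing $k(s)=2q\cn_p(s+s_0,q)$ and expressing $w=|k|^{p-2}k$ through the amplitude $\amcn$, a short calculation shows that the powers of $\cos\amcn$ cancel, leaving $w'$ proportional to $\sin(\amcn)\sqrt{1-q^2\sin^2\amcn}$. For $q\in(0,1)$ the square-root factor is bounded below by $\sqrt{1-q^2}>0$, so $w'(L)=0$ forces $\sin\amcn=0$ at $s=L$, whence $\cos\amcn=\pm1$, $|\cn_p|=1$, and $k(L)=\pm2q\neq0$.

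The main difficulty is the flat-core case: there the flat parts and the loop endpoints are all zeros of $k$ at which $w'$ likewise vanishes, so $w'(L)=0$ does not by itself rule out $k(L)=0$. Here I would argue by contradiction using horizontal displacement and Remark~\ref{rem:loop}. Suppose $k(L)=0$; then $s=L$ is a flat part or a loop endpoint, so $\gamma'(L)=-e_1$ is the common tangent direction shared by every flat part and every loop endpoint, since by Remark~\ref{rem:loop} each loop returns to its initial tangent. Consequently, in $(\gamma(L)-\gamma(0))\cdot e_1=\int_0^L\cos\theta\,ds$, each segment contributes $-L_j\leq0$, while by Remark~\ref{rem:loop} and symmetry each loop, having endpoints at $\mp\frac{\K_p(1)}{p-1}e_1$, contributes exactly $-\frac{2\K_p(1)}{p-1}<0$, independently of its sign. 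Summing over all pieces gives $(\gamma(L)-\gamma(0))\cdot e_1<0$, contradicting $\ell>0$. Hence $k(L)\neq0$, and in fact $s=L$ must be the top of a loop. This displacement bookkeeping---in particular pinning down the fixed negative sign of each loop's horizontal displacement---is the step I expect to require the most care.

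Finally, once $k(L)\neq0$ is known in both cases, the identity $k'(L)=0$ follows uniformly: near $s=L$ we have $w\neq0$, so $k=\sgn(w)|w|^{1/(p-1)}$ is a smooth function of $w$ there, and differentiating yields $k'(L)=\tfrac{1}{p-1}|w(L)|^{\frac{2-p}{p-1}}w'(L)=0$, using $w'(L)=0$. Combined with $k(0)=0$ and $k(L)\neq0$, this establishes the full additional boundary condition \eqref{eq:additional-BC}.
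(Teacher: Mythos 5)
Your proof is correct, and its overall architecture (rule out the linear case via the hooked boundary data, establish $k(L)\neq0$, then convert $w'(L)=0$ into $k'(L)=0$ using $w'=(p-1)|k|^{p-2}k'$ where $k\neq0$) matches the paper's. The one place where you genuinely diverge is the proof of $k(L)\neq0$. The paper argues uniformly by contradiction: if $k(L)=0$ then $w(L)=w'(L)=0$, and the uniqueness theory for the Cauchy problem associated with \eqref{eq:0610-1} forces $k$ to be identically zero or of flat-core type, after which only the flat-core alternative survives and is excluded exactly as you do, via \eqref{eq:flcr-property} (your displacement bookkeeping with each segment contributing $-L_j$ and each loop contributing $-\tfrac{2\K_p(1)}{p-1}$ in the $e_1$-direction is precisely the content of that identity). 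You instead split into cases and, in the wavelike case, compute $w'$ directly: the exponents of $|\cos\amcn|$ coming from $|\cn_p|^{p-2}\cn_p$ and from $\partial_x\amcn$ do cancel to zero, leaving $w'=-C\sin(\amcn)\sqrt{1-q^2\sin^2\amcn}$ with $C>0$, so $w'(L)=0$ with $q<1$ forces $|\cn_p|=1$ and $k(L)=\pm2q\alpha\neq0$. This is a valid and more self-contained route --- it avoids invoking the Cauchy-problem classification and has the side benefit of working directly with the $C^2$ quantity $w$ rather than with $\partial_x\cn_p$ (which, as the paper's Lemma~\ref{lem:zero-cn'_p} notes, is only defined off $Z_{p,q}$ when $p\geq2$); the paper's route is shorter because the case distinction is absorbed into a single cited uniqueness statement. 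The only point to make explicit in your write-up is that the chain-rule formula for $w'$ extends across the discrete set where $\cos\amcn=0$ (harmless, since $w\in C^2$ by Proposition~\ref{prop:regularity-w} and the formula extends continuously), and that in the flat-core contradiction the hypothesis $k(L)=0$ together with $k(0)=0$ ensures the restriction to $[0,L]$ contains only complete loops, so that the per-loop displacement formula from Remark~\ref{rem:loop} applies.
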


\begin{remark}[Differentiability of the curvature]\label{rem:diff_curvature}
By Proposition~\ref{prop:regularity-w}, the signed curvature is differentiable whenever $k\neq0$, and hence $k'(L)$ makes sense in \eqref{eq:additional-BC}.
\end{remark}

\begin{proof}[Proof of Lemma~\ref{lem:hooked-dichotomy}]
By $w(0)=0$ in Definition \ref{def:hooked-p-elastica*} we deduce that $k(0)=0$.
This with Proposition~\ref{prop:MY2203-thm1.1} implies that $\gamma$ is either linear, wavelike, or flat-core.
In addition, the linear case is clearly ruled out by the hooked boundary condition.

We next prove $k(L)\neq0$ by contradiction, so suppose $k(L)=0$. 
This assumption together with Definition \ref{def:hooked-p-elastica*} implies $w(L)=w'(L)=0$.
Recall that $w$ satisfies \eqref{eq:0610-1} in the classical sense.
Then, by the known classification \cite[Theorem 4.1]{MY_AMPA} on the corresponding Cauchy problem for equation \eqref{eq:0610-1},
we see that either $k\equiv0$ or $k$ is of flat-core type, i.e., of the form \eqref{eq:flat-type}.
Since $\gamma$ is not linear, it is flat-core.
Therefore, it suffices to check that there is no flat-core $p$-elastica satisfying both $k(0)=k(L)=0$ and the hooked boundary condition.
This follows by our previous classification of pinned $p$-elasticae \cite[Theorem 1.1]{MR4852697}; in fact, if $k(0)=k(L)=0$ and $\gamma$ is a flat-core $p$-elastica, then up to a similar transformation the curve $\gamma$ is of the form \eqref{eq:N-loop-flat-core}.
Hence by \eqref{eq:flcr-property} the vectors $\gamma'(L)$ and $\gamma(L)-\gamma(0)$ are in the same direction, which contradicts our hooked boundary condition.
This implies $k(L)\neq0$.

Finally, by Remark \ref{rem:diff_curvature}, we now have
$w'(L)=(p-1)|k(L)|^{p-2}k'(L)$, which together with the fact that $w'(L)=0$ implies $k'(L)=0$.
The proof is complete.
\end{proof}

Now we go into more details.
First we consider the wavelike case in Lemma~\ref{lem:hooked-dichotomy}. 
Recall from Proposition~\ref{prop:MY2203-thm1.1} that the curvature of wavelike $p$-elasticae is given in terms of $\cn_p$.
In order to characterize the points where $k'$ vanishes, we prepare the following

\begin{lemma}\label{lem:zero-cn'_p}
Let $q\in(0,1)$.
Then
\begin{align*}
\Set{ x\in\R | \tfrac{\partial}{\partial x} \cn_p(x,q)=0 } =
    \begin{cases}
    \Z\K_{1,p}(q), \quad &\text{if} \ \ p\in(1,2),\\
    2\Z\K_{1,p}(q), &\text{if} \ \ p\in[2,\infty).
    \end{cases}
\end{align*}
\end{lemma}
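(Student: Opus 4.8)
The plan is to compute $\frac{\partial}{\partial x}\cn_p(x,q)$ explicitly in terms of the amplitude $\theta:=\amcn(x,q)$ and then simply read off its zero set. First I would differentiate the defining relation $x=\Fcn(\theta,q)=\int_0^\theta \frac{|\cos\phi|^{1-\frac{2}{p}}}{\sqrt{1-q^2\sin^2\phi}}\,d\phi$. On every open interval where $\cos\theta\neq0$ (equivalently $x\notin(2\Z+1)\Kcn(q)$) the integrand is positive, so $\Fcn(\cdot,q)$ is strictly increasing and smooth with nonvanishing derivative, and the inverse function theorem gives
\[
\frac{\partial\theta}{\partial x}=|\cos\theta|^{\frac{2}{p}-1}\sqrt{1-q^2\sin^2\theta}.
\]

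Next I would rewrite $\cn_p(x,q)=\sgn(\cos\theta)\,|\cos\theta|^{\frac{2}{p}}$ from \eqref{eq:cn_p} and differentiate by the chain rule on the same intervals. Since $\sgn(\cos\theta)$ is locally constant there, a short computation yields $\frac{d}{d\theta}\cn_p=-\tfrac{2}{p}|\cos\theta|^{\frac{2}{p}-1}\sin\theta$, and combining with the previous display gives
\[
\frac{\partial}{\partial x}\cn_p(x,q)=-\frac{2}{p}\,|\cos\theta|^{\frac{4}{p}-2}\sin\theta\,\sqrt{1-q^2\sin^2\theta},\qquad \theta=\amcn(x,q).
\]

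With this formula the zero set is essentially immediate away from the degenerate points. Because $q\in(0,1)$ we have $\sqrt{1-q^2\sin^2\theta}\geq\sqrt{1-q^2}>0$, so on the intervals where $\cos\theta\neq0$ the derivative vanishes exactly when $\sin\theta=0$, i.e.\ $\theta\in\pi\Z$, which via $\amcn$ corresponds to $x\in2\Z\Kcn(q)$. It then remains to decide what happens at the candidate points $x\in(2\Z+1)\Kcn(q)$, where $\cos\theta=0$; this is the delicate step, and it is precisely where the sign of the exponent $\frac{4}{p}-2$ produces the $p$-dichotomy. I would argue by cases on the factor $|\cos\theta|^{\frac{4}{p}-2}$: for $p\in(1,2)$ the exponent is positive, so $\frac{\partial}{\partial x}\cn_p$ extends continuously by $0$ across these points, and since $\cn_p$ is continuous by Proposition~\ref{prop:property-cndn} the standard ``limit of the derivative'' criterion shows $\cn_p$ is differentiable there with derivative $0$; these points join the zero set and give $\Z\Kcn(q)$. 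For $p=2$ the exponent is $0$ and the factor is identically $1$, so $\sin\theta=\pm1$ forces a nonzero derivative there, leaving $2\Z\Kcn(q)$. For $p>2$ the exponent is negative, so $|\cos\theta|^{\frac{4}{p}-2}\to+\infty$ and the one-sided limits of $\frac{\partial}{\partial x}\cn_p$ are infinite; hence $\cn_p$ has a vertical tangent rather than a vanishing derivative, these points are excluded, and one again obtains $2\Z\Kcn(q)$. The main obstacle is thus the careful treatment at $x\in(2\Z+1)\Kcn(q)$, where the explicit formula degenerates and one must distinguish genuine differentiability with zero derivative ($p<2$) from a vertical tangent ($p>2$), with the borderline $p=2$ handled by the vanishing of the exponent.
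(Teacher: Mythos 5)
Your proposal is correct and follows essentially the same route as the paper: both compute $\tfrac{\partial}{\partial x}\cn_p(x,q)=-\tfrac{2}{p}|\cos\amcn(x,q)|^{\frac{4}{p}-2}\sin\amcn(x,q)\sqrt{1-q^2\sin^2\amcn(x,q)}$ away from $(2\Z+1)\Kcn(q)$, read off the zeros from $\sin\amcn=0$ (and, when $p<2$, also $\cos\amcn=0$), and resolve the points $(2\Z+1)\Kcn(q)$ via the sign of the exponent $\tfrac{4}{p}-2$. Your treatment of the degenerate points (continuous extension by $0$ for $p<2$ versus nonvanishing or nonexistent derivative for $p\geq2$) is in fact slightly more explicit than the paper's, but it is the same argument.
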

\begin{proof}
By definition of $\cn_p$
we obtain
\[
\frac{\partial}{\partial x}\cn_p(x,q)=-\frac{2}{p}|\cos \amcn(x,q)|^{\frac{4}{p}-2} \sin \amcn(x,q)\sqrt{1-q^2\sin^2\amcn(x,q)}
\]
for $x\in \R\setminus Z_{p,q}$, where $Z_{p,q}$ is given by \eqref{eq:Z_pq}.
Since $\frac{4}{p}-2>0$ is equivalent to $p<2$, if  $p\in(1,2)$, then $ \tfrac{\partial}{\partial x}\cn_p(x,q)$ is well defined as well as $x\in Z_{p,q}$.
Thus we see that $\tfrac{\partial}{\partial x}\cn_p(x,q)=0$ holds if and only if
\begin{align}\label{eq:cond-cn'_is_0}
\begin{cases}
    \text{$\cos\amcn(x,q)=0$ \ \ \text{or} \ \  $\sin\amcn(x,q)=0$} \ \ &\text{for } \ p\in(1,2),\\
   \text{$\sin\amcn(x,q)=0$} &\text{for } \ p\in[2,\infty).
\end{cases}
\end{align}
Since $\amcn$ is the inverse of the strictly increasing and periodic function $\Fcn$, cf.\ \eqref{eq:period_Ecn}, for any $n\in\Z$ and $x\in \R$,
\begin{align} \label{eq:0925-7}
\amcn(x + 2n \K_{1,p}(q), q) = \amcn(x, q) + n\pi.
\end{align}
This together with $\amcn(\Kcn(q),q)=\pi/2$ and $\amcn(0,q)=0$ implies that
\begin{align*}
\Set{ x\in\R | \cos\amcn(x,q)=0  } &= (2\Z+1)\Kcn(q),\\
\Set{ x\in\R | \sin\amcn(x,q)=0  } &= 2\Z\Kcn(q).
\end{align*}
This with \eqref{eq:cond-cn'_is_0} completes the proof.
\end{proof}

This lemma together with \eqref{eq:Z_pq} directly implies the following

\begin{corollary}\label{cor:cn_p-diff-0}
    Let $p\in(1,\infty)$, $q\in(0,1)$, and $x\in\R$.
    Then
    $\cn_p(x,q)\neq0$ and $\tfrac{\partial}{\partial x}\cn_p(x,q)=0$ hold simultaneously if and only if $x\in 2\Z\Kcn(q)$.
\end{corollary}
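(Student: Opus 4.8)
The plan is simply to intersect the two sets already computed in the preceding results, so the argument is essentially bookkeeping. First I would rewrite the hypothesis $\cn_p(x,q)\neq 0$ using \eqref{eq:Z_pq}: this condition holds precisely when $x\notin(2\Z+1)\Kcn(q)$, that is, when $x$ is \emph{not} an odd integer multiple of $\Kcn(q)$. On the other side, Lemma~\ref{lem:zero-cn'_p} furnishes the full zero set of $\frac{\partial}{\partial x}\cn_p(\cdot,q)$, which is $\Z\Kcn(q)$ if $p\in(1,2)$ and $2\Z\Kcn(q)$ if $p\in[2,\infty)$. The claim will then follow by intersecting these two descriptions.

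I would organize the intersection by the same dichotomy. If $p\in[2,\infty)$, the zero set of the derivative is already $2\Z\Kcn(q)$, consisting of even multiples of $\Kcn(q)$, and is therefore disjoint from the odd multiples $(2\Z+1)\Kcn(q)$; hence the constraint $\cn_p\neq0$ imposes nothing further and the intersection is exactly $2\Z\Kcn(q)$. If instead $p\in(1,2)$, the derivative vanishes on the larger set $\Z\Kcn(q)$ of \emph{all} integer multiples, and deleting the odd ones (on which $\cn_p$ vanishes) leaves precisely the even multiples $2\Z\Kcn(q)$. In both regimes the two conditions hold simultaneously if and only if $x\in 2\Z\Kcn(q)$, as asserted.

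The only substantive point --- and the nearest thing to an obstacle --- is to confirm that $\cn_p$ really is nonzero at the surviving even multiples, so that they are not spuriously excluded from the intersection. This is immediate from Proposition~\ref{prop:property-cndn}(i): the $2\Kcn(q)$-antiperiodicity of $\cn_p(\cdot,q)$ together with $\cn_p(0,q)=1$ yields $\cn_p(2n\Kcn(q),q)=(-1)^n$ for every $n\in\Z$, which never vanishes. With this checked, the set-theoretic intersection above is rigorous and the corollary follows.
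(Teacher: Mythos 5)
Your argument is correct and is essentially the paper's own proof: the corollary is obtained there by directly combining Lemma~\ref{lem:zero-cn'_p} with the zero set description \eqref{eq:Z_pq}, exactly the intersection you carry out. Your extra check that $\cn_p(2n\Kcn(q),q)=(-1)^n\neq0$ via Proposition~\ref{prop:property-cndn}(i) is a worthwhile explicit verification of a point the paper leaves implicit, but it does not change the route.
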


Now we turn to the flat-core case in Lemma~\ref{lem:hooked-dichotomy}. 
In view of $k(L)\neq0$, any hooked flat-core $p$-elastica has a loop part around $s=L$. 
Recall from Proposition~\ref{prop:MY2203-thm1.1} that the curvature of the loop part is given in terms of $\sech_p|_{[-\K_p(1), \K_p(1)]}$.
In order to characterize the property that $k(L)\neq 0$ and $k'(L)=0$ in \eqref{eq:additional-BC}, we prepare the following

\begin{lemma}\label{lem:zero-sech'_p}
Let $p>2$ and 
$x\in (-\K_p(1), \K_p(1))$. 
Then, $\sech_p'{x}=0$ holds if and only if $x=0$.
\end{lemma}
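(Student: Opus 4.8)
The plan is to compute $\sech_p' x$ explicitly on the open interval $(-\K_p(1),\K_p(1))$ and show its only zero is at $x=0$. On this interval $\sech_p x = \cn_p(x,1)$, so the natural starting point is to differentiate the identity $\cn_p(x,q) = |\cos\amcn(x,q)|^{\frac{2}{p}-1}\cos\amcn(x,q)$ at the modulus $q=1$. I expect the cleanest route is to reuse the differentiation formula already established in the proof of Lemma~\ref{lem:zero-cn'_p}: setting $q=1$ there gives
\begin{align*}
\sech_p' x = -\frac{2}{p}\,|\cos\amcn(x,1)|^{\frac{4}{p}-2}\,\sin\amcn(x,1)\,\sqrt{1-\sin^2\amcn(x,1)},
\end{align*}
valid wherever $\cn_p(x,1)\neq0$, i.e.\ for $x$ in the open interval where $\cos\amcn(x,1)\neq0$. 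The factor $\sqrt{1-\sin^2\amcn(x,1)} = |\cos\amcn(x,1)|$ then combines with the power to give a strictly positive prefactor times $\sin\amcn(x,1)$, so the sign of $\sech_p'$ is governed entirely by $\sin\amcn(x,1)$.

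First I would record that since $p>2$, the exponent $\frac{4}{p}-2$ is negative, but this poses no problem on the open interval since $\cos\amcn(x,1)\neq0$ there, so $|\cos\amcn(x,1)|^{\frac{4}{p}-2}$ is a well-defined strictly positive number. Next I would argue that on $(-\K_p(1),\K_p(1))$ the amplitude $\amcn(\cdot,1)$ takes values in $(-\frac{\pi}{2},\frac{\pi}{2})$: indeed $\amcn(\cdot,1)$ is the inverse of $\Fcn(\cdot,1)$, which for $p>2$ is a strictly increasing bijection from $(-\frac{\pi}{2},\frac{\pi}{2})$ onto $(-\K_p(1),\K_p(1))$ by Definition~\ref{def:K_p-E_p}, and $\amcn(0,1)=0$. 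On this range, $\sin\amcn(x,1)=0$ holds if and only if $\amcn(x,1)=0$, i.e.\ if and only if $x=0$. Combining this with the sign analysis above shows $\sech_p' x = 0$ on the open interval precisely when $x=0$.

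The only delicate point is the validity of the differentiation formula at $q=1$, since the proof of Lemma~\ref{lem:zero-cn'_p} was stated for $q\in(0,1)$; I would either remark that the same computation goes through verbatim for $q=1$ on the open interval (where all factors are smooth and $\cos\amcn\neq0$), or alternatively differentiate the defining relation $x=\Fcn(\amcn(x,1),1)=\int_0^{\amcn(x,1)}|\cos\phi|^{-2/p}\,d\phi$ directly to get $\frac{\partial}{\partial x}\amcn(x,1)=|\cos\amcn(x,1)|^{2/p}$ and then apply the chain rule to $\cn_p(x,1)=|\cos\amcn(x,1)|^{\frac{2}{p}-1}\cos\amcn(x,1)=(\cos\amcn(x,1))^{2/p}\cdot\sgn(\cos\amcn(x,1))$, noting $\cos\amcn(x,1)>0$ on the interval. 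This second route is perhaps cleaner since it avoids citing the $q<1$ formula and makes the positivity of the prefactor manifest. The computation is otherwise routine, and I do not anticipate any substantive obstacle beyond keeping track of the absolute values and the restriction to the open interval where $\sech_p$ coincides with $\cn_p(\cdot,1)$.
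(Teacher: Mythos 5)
Your proposal is correct and follows essentially the same route as the paper: the paper likewise computes $\sech_p'x=-\frac{2}{p}(\cos\amcn(x,1))^{\frac{4}{p}-1}\sin\amcn(x,1)$ by direct differentiation and concludes from $|\amcn(x,1)|<\pi/2$ that the only zero is $x=0$. Your second, ``direct'' route via $\frac{\partial}{\partial x}\amcn(x,1)=|\cos\amcn(x,1)|^{2/p}$ reproduces exactly this formula, and your observation that $\sqrt{1-\sin^2\amcn(x,1)}=|\cos\amcn(x,1)|$ reconciles the $q=1$ specialization of the Lemma~\ref{lem:zero-cn'_p} computation with it.
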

\begin{proof}
Note that the differentiability of $\sech_p$ on $\R\setminus\{\pm\K_p(1)\}$ is already shown in \cite[Theorem 3.16]{MY_AMPA}.
Let $x\in (-\K_p(1), \K_p(1))$.
By direct computation we have
\[
\sech_p'{x}=-\frac{2}{p}\big(\cos\amcn(x,1)\big)^{\frac{4}{p}-1 } \sin \amcn(x,1),
\]
and also $|\amcn(x,1)|<\pi/2$, so that $\sech_p'{x}=0$ if and only if $x=0$.
\end{proof}

Now we are in a position to state the classification for hooked $p$-elasticae with an explicit parametrization in terms of $\gamma_w$, $\gamma^\pm_b$, and $\gamma_\ell$ introduced in Proposition~\ref{prop:MY2203-thm1.1}.

\begin{theorem}[Classification of hooked $p$-elasticae]\label{thm:classify-hook}
Let $p\in(1,\infty)$ and $0<\ell<L$.
Let $\gamma \in \Ah$ be a hooked $p$-elastica.
\begin{itemize}
    \item[(i)] If $p\in(1,2]$, or if $p\in(2,\infty)$ and $\ell\in(0,\frac{1}{p-1}L)$, then, up to vertical translation and reflection,  $\gamma$ is given by
    \begin{align}\label{eq:hooked-wave}
    \gamma(s)&= \tfrac{1}{\alpha_n} R_\pi \Big( \gamma_w(\alpha_n s + \Kcn(q),q) -\gamma_w(\Kcn(q),q) \Big), \quad s\in[0,L], \\
    \alpha_n&:=\frac{(2n-1)\K_{1,p}(q)}{L}, \notag
    \end{align}
    for some $n\in \N$, where $q\in(0,1)$ is a unique solution of 
    \begin{align}\label{eq:wave-loop-r}
      2\frac{\E_{1,p}(q)}{\K_{1,p}(q)} -1 = -\frac{\ell}{L}.  
    \end{align}
    \item[(ii)] If $p\in(2,\infty)$ and $\ell\in[\frac{1}{p-1}L,L)$, then, up to vertical translation (and reflection), $\gamma$ is given by
    \begin{align}
    \gamma(s)&=\tfrac{1}{\bar\alpha_n}R_\pi\,\Gamma_n\left(\bar\alpha_n s \right), \quad s\in[0,L], \label{eq:hooked-flat}\\
    \bar\alpha_n &:= (2n-1) \frac{1}{L-\ell} \frac{p-2}{p-1} \K_p(1), \label{eq:hooked-flat-alpha}
    \end{align}
    for some $n\in \N$, where $\Gamma_n$ is an arclength parametrized curve defined by 
       \begin{align}
       \begin{split}\label{eq:def-Gamma_n}
       \Gamma_1&:=\gamma_{\ell}^{L_1}\oplus \left(\gamma_{b}^{\sigma_1}\big|_{[-\K_p(1), 0]}\right), 
       \\
       \Gamma_n&:=\Bigg( \bigoplus_{j=1}^{n-1} \big( \gamma_{\ell}^{L_j} \oplus \gamma_{b}^{\sigma_j} \big) \Bigg) \oplus \gamma_{\ell}^{L_{n}}\oplus \left(\gamma_{b}^{\sigma_n}\big|_{[-\K_p(1), 0]}\right) \quad\text{if}\quad  n\geq2, 
       \end{split}
       \end{align}
   for some $\sigma_1,\dots,\sigma_n\in\{+,-\}$ and
   $L_1,\dots,L_{n} \geq0$ such that
    \begin{align} \label{eq:sum-flatparts-Ah}
    \sum_{j=1}^{n}L_j = (2n-1)\frac{\frac{\ell}{L}-\frac{1}{p-1}}{1-\frac{\ell}{L}} \K_{p}(1).
    \end{align}
\end{itemize}
\end{theorem}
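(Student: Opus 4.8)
The plan is to start from the dichotomy of Lemma~\ref{lem:hooked-dichotomy}, which already tells us that $\gamma$ is either wavelike or flat-core and that its curvature obeys the boundary conditions \eqref{eq:additional-BC}. In each case I would first normalize $\gamma$ by similarity and reparametrization to a standard representative from Proposition~\ref{prop:MY2203-thm1.1}, then translate the curvature conditions $k(0)=0$, $k(L)\neq0$, $k'(L)=0$ into conditions on the underlying parameter (the location at which the standard curve is cut), and only at the end impose the horizontal displacement $(\gamma(L)-\gamma(0))\cdot e_1=\ell$ to pin down the remaining degrees of freedom. The rotation $R_\pi$ appearing in both \eqref{eq:hooked-wave} and \eqref{eq:hooked-flat} will enter precisely to enforce $\gamma'(L)=-e_1$: at the relevant cut points (an even multiple of $\Kcn(q)$ for the wavelike curve, the center of a loop for the flat-core curve) the standard tangent angle vanishes, so the tangent points in the $+e_1$ direction before rotation.

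For the wavelike case, after scaling by some $\alpha>0$ the curvature is a nonzero multiple of $\cn_p(\alpha s+s_0,q)$. The condition $k(0)=0$ forces $\cn_p(s_0,q)=0$, so by \eqref{eq:Z_pq} the point $s_0$ is an odd multiple of $\Kcn(q)$, which up to reflection I normalize to $s_0=\Kcn(q)$. The conditions $k(L)\neq0$ and $k'(L)=0$ say that at the endpoint $\cn_p$ is nonzero with vanishing derivative, so by Corollary~\ref{cor:cn_p-diff-0} the endpoint parameter $\alpha L+\Kcn(q)$ lies in $2\Z\Kcn(q)$; positivity gives $\alpha L+\Kcn(q)=2n\Kcn(q)$, hence $\alpha=\alpha_n=(2n-1)\Kcn(q)/L$ for some $n\in\N$. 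Finally I would compute $(\gamma(L)-\gamma(0))\cdot e_1$ using the $x$-component $2\Ecn(\amcn(s,q),q)-s$ of $\gamma_w$ together with the periodicity relations \eqref{eq:period_Ecn} and the values $\amcn(\Kcn(q),q)=\pi/2$, $\amcn(2n\Kcn(q),q)=n\pi$; this collapses to $-L\big(2\Ecn(q)/\Kcn(q)-1\big)$, and setting it equal to $\ell$ yields \eqref{eq:wave-loop-r}. Existence and uniqueness of the modulus $q\in(0,1)$ then follow from the strict monotonicity of $Q_p$ in Lemma~\ref{lem:nabe-lem2}, whose range simultaneously dictates that this case is solvable exactly when $\ell\in(0,L)$ for $p\le2$ and $\ell\in(0,\tfrac{1}{p-1}L)$ for $p>2$.

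For the flat-core case (only possible when $p>2$), the curvature is the sum of translated $\sech_p$ bumps of \eqref{eq:flat-type}, each supported on a loop of length $2\K_p(1)$ and separated by flat parts. Since $k(L)\neq0$ and $k'(L)=0$, Lemma~\ref{lem:zero-sech'_p} forces the endpoint to sit exactly at the center of a loop, so the last piece is a half-loop $\gamma_b^{\sigma_n}|_{[-\K_p(1),0]}$; counting the remaining $n-1$ full loops and the intervening flat parts reconstructs the representative $\Gamma_n$ of \eqref{eq:def-Gamma_n}. I would then compute its length $(2n-1)\K_p(1)+\sum_{j=1}^n L_j$ and its horizontal displacement, the latter using the flat-part contribution $-\sum_{j} L_j$ together with the loop endpoint value $\gamma_b^+(\pm\K_p(1))=\mp\tfrac{\K_p(1)}{p-1}e_1$ from Remark~\ref{rem:loop}. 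Matching the length to $L$ and the rotated displacement to $\ell$ gives two linear relations whose difference yields $\bar\alpha_n$ as in \eqref{eq:hooked-flat-alpha}, and back-substitution yields the flat-length constraint \eqref{eq:sum-flatparts-Ah}.

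The main obstacle I anticipate is twofold. First, the horizontal-displacement computations must be carried out carefully in both cases, relying on the exact closed forms from the period relations \eqref{eq:period_Ecn} and on the nontrivial endpoint value of the loop recorded in Remark~\ref{rem:loop}; a sign or bookkeeping slip here would break the clean collapse to \eqref{eq:wave-loop-r} and \eqref{eq:sum-flatparts-Ah}. Second, and more conceptually, I must verify that the two cases partition the admissible range of $\ell$ exactly at the threshold $\tfrac{1}{p-1}L$: the wavelike case is solvable precisely where $Q_p(q)=-\ell/L$ has a root, i.e.\ where $-\ell/L$ lies strictly above $\lim_{q\uparrow1}Q_p(q)$, whereas the flat-core case requires $\sum_{j} L_j\ge0$ in \eqref{eq:sum-flatparts-Ah}, i.e.\ $\ell\ge\tfrac{1}{p-1}L$. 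Reconciling these two conditions through Lemma~\ref{lem:nabe-lem2} is what makes the dichotomy in $\ell$ come out exactly as stated.
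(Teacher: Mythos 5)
Your proposal is correct and follows essentially the same route as the paper's proof: it starts from the dichotomy of Lemma~\ref{lem:hooked-dichotomy}, normalizes via Proposition~\ref{prop:MY2203-thm1.1}, pins down the cut points using Corollary~\ref{cor:cn_p-diff-0} and Lemma~\ref{lem:zero-sech'_p}, derives \eqref{eq:wave-loop-r} and \eqref{eq:sum-flatparts-Ah} from the horizontal-displacement condition, and invokes Lemma~\ref{lem:nabe-lem2} for uniqueness of $q$ and the dichotomy in $\ell$. The only (harmless) difference is that you propose to verify directly that the flat-core case forces $\ell\geq\frac{1}{p-1}L$, whereas the paper notes this follows automatically since the two cases are exhaustive and the wavelike case occupies exactly $\ell<\frac{1}{p-1}L$.
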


\begin{proof}
    Note first that $\gamma$ is either wavelike or flat-core, by Proposition \ref{prop:MY2203-thm1.1} and Lemma \ref{lem:hooked-dichotomy}.
    In what follows, we mainly prove the following propositions.
    \begin{itemize}
        \item (Case 1) $\gamma$ is wavelike if and only if the assertion in (i) holds.
        \item (Case 2) $\gamma$ is flat-core if and only if the assertion in (ii) holds.
    \end{itemize}
    Along the way, we also observe that Case 1 occurs if and only if $\frac{\ell}{L}<\frac{1}{p-1}$ (i.e., either $p\in(1,2]$, or $p\in(2,\infty)$ and $\ell\in(0,\frac{1}{p-1}L)$).
    This automatically means that Case 2 occurs if and only if $\frac{\ell}{L}\geq\frac{1}{p-1}$ (i.e., $p\in(2,\infty)$ and $\ell\in[\frac{1}{p-1}L,L)$).

    \textbf{Case 1} (\textsl{Wavelike $p$-elasticae})\textbf{.}
    Let $\gamma\in\Ah$ be a wavelike hooked $p$-elastica.
    Up to a vertical translation, we may assume that $\gamma(0)=(0,0)$.
    Then Proposition \ref{prop:MY2203-thm1.1} implies that $\gamma$ is given by 
    \begin{equation}\label{eq:0710-11}
        \gamma(s)=\frac{1}{\alpha}AR_\phi(\gamma_w(\alpha s+s_0,q)-\gamma_w(s_0))
    \end{equation}
    for some $q\in(0,1)$, $s_0\in\R$, $\alpha>0$, $\phi\in[0,2\pi)$, and 
    $A\in\{I,J\}$, where $I$ denotes the identity and $J$ denotes the vertical reflection $P\mapsto P-2(P\cdot e_2)e_2$, both given by $2\times2$ matrices.
    By Proposition \ref{prop:MY2203-thm1.1}, the curvature of $\gamma$ is of the form $k(s)=\pm2\alpha q\cn_p(\alpha s+s_0,q)$.
    Now we use the boundary conditions in Lemma \ref{lem:hooked-dichotomy}.
    Since $k(0)=0$, by \eqref{eq:Z_pq} we have $s_0\in(2\Z+1)\Kcn(q)$; by periodicity we may assume that $s_0\in\{\Kcn(q),-\Kcn(q)\}$; by symmetry, up to reflection (i.e., changing $A$ if necessary) we may eventually assume that
    \begin{equation}\label{eq:0710-12}
        s_0=\Kcn(q).
    \end{equation}
    Moreover, since $k(L)\neq0$ and $k'(L)=0$, by Corollary \ref{cor:cn_p-diff-0} we have $\alpha L+\Kcn(q) \in 2\Z\Kcn(q)$, or equivalently $\alpha L \in (2\Z-1)\Kcn(q)$.
    Since $\alpha,L>0$, this means that there is some $n\in\N$ such that 
    \begin{equation}\label{eq:0710-13}
        \alpha = \frac{(2n-1)\Kcn(q)}{L}\ (=\alpha_n).
    \end{equation}
    By \eqref{eq:0710-12}, \eqref{eq:0710-13}, and the formula for $\theta_w$ in Proposition \ref{prop:MY2203-thm1.1}, we also deduce that 
    $$|\theta(L)|= \phi+\theta_w(\alpha L+s_0) =  \phi+2\arcsin(q\sn_p(2n\Kcn(q),q)) = \phi \quad (\text{mod $2\pi\Z$}).$$
    Since $\gamma'(L)= -e_1$ by definition of $\Ah$, we need to have
    \begin{equation}\label{eq:0710-14}
        \phi = \pi.
    \end{equation}
    The necessary conditions in \eqref{eq:0710-11}--\eqref{eq:0710-14} already imply \eqref{eq:hooked-wave}.
    Moreover, in addition to \eqref{eq:0710-11}--\eqref{eq:0710-14}, by using $(\gamma(L)-\gamma(0))\cdot e_1=\ell$ and also formula \eqref{eq:EP2}, we deduce that
    \begin{align*}
    -\frac{L}{(2n-1)\Kcn(q)}\Big( 2\Ecn(\amcn(2n\Kcn(q),q),q)-2n\Kcn(q)& \\
    -2\Ecn(\amcn(\Kcn(q),q),q) +\Kcn(q)& \Big) =\ell.
    \end{align*}
    By $\amcn(\Kcn(q),q)=\frac{\pi}{2}$, \eqref{eq:period_Ecn}, and \eqref{eq:0925-7}, we thus find that \eqref{eq:wave-loop-r} is also necessary to hold.
    By Lemma~\ref{lem:nabe-lem2}, equation \eqref{eq:wave-loop-r} has a solution $q\in(0,1)$ (if and) only if $\frac{\ell}{L}<\frac{1}{p-1}$, and such a solution is unique if exists.
    In summary, if $\gamma$ is a wavelike $p$-elastica, then the assertion in (i) holds true, and also necessarily $\frac{\ell}{L}<\frac{1}{p-1}$.
    
    Conversely, if the assertion in (i) holds true, then it is clear that $\gamma$ needs to be a wavelike $p$-elastica, while it is also necessary that $\frac{\ell}{L}<\frac{1}{p-1}$ by Lemma~\ref{lem:nabe-lem2}.

    \textbf{Case 2} (\textsl{Flat-core $p$-elasticae})\textbf{.} 
    Let  $\gamma \in \Ah$ be a flat-core hooked $p$-elastica. 
    Up to translation, we may assume that $\gamma(0)=(0,0)$.
    Then, by Proposition~\ref{prop:MY2203-thm1.1}, 
    \begin{align}\label{eq:0803-1}
    \gamma(s)=\frac{1}{\alpha}R_\phi \big( \gamma_f(\alpha s+s_0) - \gamma_f(s_0) \big),\quad \gamma_f=\bigoplus_{j=1}^m\big(\gamma_\ell^{L_j}\oplus\gamma_b^{\sigma_j}\big),
    \end{align}
    for some $m\in \N$, $s_0\in\R$, $\alpha>0$, $\phi\in[0,2\pi)$, $\boldsymbol{\sigma}=(\sigma_1,\ldots, \sigma_m) \in \{+,-\}^m$, and $\boldsymbol{L}=(L_1,\ldots, L_m) \in [0,\infty)^m$.
    (The vertical reflection corresponds to the replacement of $\boldsymbol{\sigma}$ with $-\boldsymbol{\sigma}:=(-\sigma_1,\ldots,-\sigma_m)$.)
    Now we use the boundary condition in Lemma~\ref{lem:hooked-dichotomy}.
    Since the curvature of $\gamma$ vanishes at $s=0$, by changing $m$, $\boldsymbol{\sigma}$, and $\boldsymbol{L}$ if necessary, we may assume that $0\leq s_0\leq L_1$, and then, by replacing $L_1$ with $L_1- s_0$, we may assume that $s_0=0$ in \eqref{eq:0803-1}.
    Thus the form \eqref{eq:0803-1} can be reduced to 
    \begin{align}\label{eq:0803-10}
    \gamma(s)=\frac{1}{\alpha}R_\phi \gamma_f(\alpha s),\quad \gamma_f=\bigoplus_{j=1}^m\big(\gamma_\ell^{L_j}\oplus\gamma_b^{\sigma_j}\big).
    \end{align}
    By Proposition~\ref{prop:MY2203-thm1.1}, the curvature of $\gamma$ is given by $k(s)=\alpha k_f(\alpha s)$, where
    \[
    k_f(s)=\sum_{j=1}^m 2\sigma_j \sech_p(s-s_j), \quad  s_j:=(2j-1)\K_p(1)+\sum_{i=1}^jL_i.
    \]
    Since $k(L)\neq0$ and $k'(L)=0$, we deduce from the above form combined with Lemma~\ref{lem:zero-sech'_p} that $\alpha L=s_n$ for some $n\in\{1,\ldots,m\}$, which is equivalent to
    \begin{align}\label{eq:0803-2}
        \alpha=\frac{1}{L}\Big((2n-1)\K_p(1) +\sum_{j=1}^n L_j \Big)
    \end{align}
    for some $n\in\{1,\ldots,m\}$.
    This also means that $\gamma_f$ in \eqref{eq:0803-10} satisfies $\gamma_f(\alpha L)=\gamma_b^{\sigma_n}(0)$, and hence $\gamma_f$ coincides with $\Gamma_n$ defined by \eqref{eq:def-Gamma_n}.
    Next we use the original hooked boundary condition.
    Since $\alpha L=s_n$ implies $(\gamma_f)'(\alpha L)=(\gamma_b^{\sigma_n})'(0)$, and since $(\gamma_b^{\sigma_n})'(0)=e_1$ by Remark~\ref{rem:loop} (and Figure~\ref{fig:gamma_b_v2}), we deduce from \eqref{eq:0803-10} that
    \begin{align*}
        \gamma'(L) = R_\phi (\gamma_b^{\sigma_n})'(0) = R_\phi e_1.
    \end{align*}
    On the other hand, $\gamma'(L)=-e_1$ by definition of $\Ah$, and hence
    we need to have
    \begin{align}\label{eq:0803-3}
        \phi=\pi.
    \end{align}
    By Remark \ref{rem:loop} and symmetry (cf.\ Figure \ref{fig:gamma_b_v2}) we have $\gamma_b^{\pm}(\K_p(1))-\gamma_b^{\pm}(-\K_p(1))=-\frac{2}{p-1}\K_p(1)e_1$ and $\gamma_b^{\pm}(0)-\gamma_b^{\pm}(-\K_p(1))=-\frac{1}{p-1}\K_p(1)e_1$.
    Combining these with $(\gamma(L)-\gamma(0))\cdot e_1=\ell$ from definition of $\Ah$, we deduce that
    \begin{align}\label{eq:0803-4}
    \frac{1}{\alpha}\Big(\frac{2n-1}{p-1}\K_p(1)+\sum_{j=1}^n L_j \Big)=\ell.
    \end{align}
    Combining this with \eqref{eq:0803-2}, we find that $\alpha$ in \eqref{eq:0803-10} is equal to $\bar{\alpha}_n$ defined by \eqref{eq:hooked-flat-alpha}, and we need to have \eqref{eq:sum-flatparts-Ah}.
    Consequently, the necessary conditions in \eqref{eq:0803-1}--\eqref{eq:0803-4} imply \eqref{eq:hooked-flat} and \eqref{eq:hooked-flat-alpha}.
    Now we find that if $\gamma$ is a flat-core $p$-elastica, then the assertion in (ii) holds true.
    
    As in Case 1, the converse is obvious.
    (We can also directly observe that $\frac{L}{p-1} \leq \ell$ is necessary and sufficient for Case 2, but logically we need not verify it.)
    The proof is complete.
\end{proof}

\subsection{Global minimizers}\label{sect:minimial-hook}

Thanks to Theorem~\ref{thm:classify-hook} we can detect global minimizers by comparing the energy of all hooked $p$-elasticae. 
We first prove the existence of global minimizers by the standard direct method.

\begin{proposition}\label{prop:direct-method-Ah}
Given $p\in(1,\infty)$ and $0<\ell<L$, there exists a solution to the following minimization problem 
\[\min_{\gamma\in\Ah} \mathcal{B}_p[\gamma].\]
\end{proposition}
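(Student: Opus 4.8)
The plan is to apply the direct method of the calculus of variations to the minimization of $\B_p$ over the hooked admissible class $\Ah$. First I would establish that the infimum is finite and nonnegative: clearly $\B_p[\gamma]\geq0$, and $\Ah$ is nonempty (for instance one can exhibit an explicit competitor, such as a curve built from arcs of bounded curvature joining the endpoints with the prescribed terminal tangent, so the infimum $m:=\inf_{\gamma\in\Ah}\B_p[\gamma]$ is a finite real number). I would then take a minimizing sequence $\{\gamma_n\}\subset\Ah$ with $\B_p[\gamma_n]\to m$, so that $\|k_n\|_{L^p(0,L)}$ is uniformly bounded, where $k_n$ is the signed curvature of $\gamma_n$.

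Next I would extract compactness. Since each $\gamma_n$ is arclength-parametrized, $|\gamma_n'|\equiv1$ gives a uniform $L^\infty$ bound on $\gamma_n'$, and $\gamma_n''=k_n R_{\pi/2}\gamma_n'$ (the curvature times the normal) yields a uniform $L^p$ bound on $\gamma_n''$. Combined with the pointwise constraint $\gamma_n(L)-\gamma_n(0)\cdot e_1=\ell$ and boundedness of the curves (modulo a harmless vertical translation, which is a symmetry of both $\Ah$ and $\B_p$, one may normalize $\gamma_n(0)=P_0$ for a fixed $P_0$), this gives a uniform bound on $\{\gamma_n\}$ in $W^{2,p}(0,L;\R^2)$. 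By reflexivity of $W^{2,p}$ for $p\in(1,\infty)$ and the compact embedding $W^{2,p}(0,L)\hookrightarrow C^1([0,L])$, I would pass to a subsequence converging weakly in $W^{2,p}$ and strongly in $C^1$ to some limit $\gamma_\infty$.

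Then I would verify that $\gamma_\infty\in\Ah$ and that it attains the infimum. The $C^1$ convergence directly passes the constraints $|\gamma_n'|\equiv1$, the boundary condition $\gamma'(L)=-e_1$, and the scalar condition $(\gamma(L)-\gamma(0))\cdot e_1=\ell$ to the limit, so $\gamma_\infty\in\Ah$. For the energy I would invoke weak lower semicontinuity: writing $\B_p[\gamma]=\int_0^L|k|^p\,ds$ where $k=\gamma''\cdot R_{\pi/2}\gamma'$, the strong $C^1$ convergence of $\gamma_n'$ together with the weak $L^p$ convergence of $\gamma_n''$ shows that $k_n\rightharpoonup k_\infty$ weakly in $L^p$, and the $L^p$-norm is weakly lower semicontinuous, whence $\B_p[\gamma_\infty]\leq\liminf_n\B_p[\gamma_n]=m$. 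Since $\gamma_\infty\in\Ah$ forces $\B_p[\gamma_\infty]\geq m$, equality holds and $\gamma_\infty$ is a minimizer.

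The main obstacle I anticipate is the coercivity/boundedness step: the only geometric quantity controlled by the energy is the curvature in $L^p$, and the admissible class constrains only one scalar projection of the displacement together with the terminal tangent, leaving the curves free to translate vertically and otherwise spread out. I would handle this by quotienting out the vertical translation symmetry (normalizing $\gamma_n(0)$) and then using $|\gamma_n'|\equiv1$ to bound the curves in $L^\infty$ uniformly in terms of the fixed length $L$, so that no escape to infinity occurs and the $W^{2,p}$ bound is genuinely uniform. A minor technical point worth checking is that weak $W^{2,p}$ convergence is compatible with the identity $k_n=\gamma_n''\cdot R_{\pi/2}\gamma_n'$ passing to the limit, which is exactly where the strong $C^1$ convergence of the first derivative is used to avoid any product-of-weak-limits difficulty.
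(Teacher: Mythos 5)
Your proposal is correct and follows essentially the same route as the paper: normalize by translation, obtain a uniform $W^{2,p}$ bound from $\|\gamma_n''\|_{L^p}^p=\B_p[\gamma_n]$ and $|\gamma_n'|\equiv1$, extract a subsequence converging weakly in $W^{2,p}$ and strongly in $C^1$, pass the constraints to the limit, and conclude by weak lower semicontinuity of the $L^p$-norm. The only (harmless) difference is that the paper avoids your final product-of-weak-limits worry by working directly with $\|\gamma''\|_{L^p}$ rather than with $k=\gamma''\cdot R_{\pi/2}\gamma'$, since these coincide for arclength-parametrized curves.
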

\begin{proof}
Let $\{\gamma_j\}_{j\in \N}\subset \Ah$ be a minimizing sequence of $\mathcal{B}_p$ in $\Ah$, i.e.,
\begin{align} \label{eq:min_seq}
 \lim_{j\to\infty}\mathcal{B}_p[\gamma_j]=\inf_{\gamma\in \Ah}\mathcal{B}_p[\gamma]. 
\end{align}
We may suppose that, up to translation, $\gamma_j(0)=(0,0)$.
By \eqref{eq:min_seq}, there is $C>0$ such that $\mathcal{B}_p[{\gamma}_j]\leq C$, and this together with the fact that $\|\gamma_j''\|_{L^p}^p =\mathcal{B}_p[{\gamma}_j]$ yields the uniform estimate of $\|\gamma_j''\|_{L^p}$.
Using $|\gamma_j'|\equiv 1$ and $\gamma_j(0)=(0,0)$, we also obtain the bounds on the $W^{1,p}$-norm.
Therefore, $\{\gamma_j\}_{j\in \N}$ is uniformly bounded in $W^{2,p}(0,L;\R^2)$ so that there is a subsequence (without relabeling) that converges in the sense of $W^{2,p}$-weak and $C^1$ topology.
Thus the limit curve $\gamma_\infty$ satisfies $\gamma_\infty \in W^{2,p}(0,L;\R^2)$, $|\gamma_\infty'|\equiv 1$, $(\gamma_\infty(L)-\gamma_\infty(0))\cdot e_1=\ell$, and 
$\gamma_\infty'(L)=-e_1$, 
which implies that $\gamma_\infty \in \Ah$.
In addition, since $\gamma_\infty$ is parametrized by its arclength, the weak lower semicontinuity for $\|\cdot\|_{L^p}$ ensures that 
\[
\mathcal{B}_p[\gamma_\infty]=\|\gamma_\infty''\|_{L^p}^p \leq \liminf_{j\to\infty} \|\gamma_j''\|_{L^p}^p=\liminf_{j\to\infty}\mathcal{B}_p[\gamma_j].
\]
Thus we see that $\gamma_\infty$ is a minimizer of $\mathcal{B}_p$ in $\Ah$.
\end{proof}

We also recall the following lemma to calculate the $p$-bending energy of hooked $p$-elasticae. 

\begin{lemma}[{\cite[Lemma 4.2]{MR4852697}}] \label{lem:int-cn_p}
For each $q\in(0,1)$ (including $q=1$ if $p>2$),  
\begin{align} \notag
    \int_0^{\K_{1,p}(q)} |\cn_p(s, q)|^p\,ds 
= \frac{1}{q^2}\E_{1,p}(q) + \Big( 1-\frac{1}{q^2}\Big)\K_{1,p}(q).
\end{align}
\end{lemma}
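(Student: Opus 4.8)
The plan is to evaluate the left-hand side by the change of variables $\phi=\amcn(s,q)$, which converts the $\cn_p$-integral into a standard $p$-elliptic integral over $[0,\pi/2]$, and then to recognize the outcome as precisely the stated combination of $\Ecn(q)$ and $\Kcn(q)$.

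First I would substitute $s=\Fcn(\phi,q)$, i.e.\ $\phi=\amcn(s,q)$. By Definition~\ref{def:K_p-E_p} this gives $ds=\frac{|\cos\phi|^{1-\frac{2}{p}}}{\sqrt{1-q^2\sin^2\phi}}\,d\phi$, while the endpoints transform as $s=0\mapsto\phi=0$ and $s=\Kcn(q)=\Fcn(\pi/2,q)\mapsto\phi=\pi/2$. On the range $\phi\in[0,\pi/2]$ one has $\cos\phi\ge0$, so by \eqref{eq:cn_p} the integrand simplifies cleanly: $|\cn_p(s,q)|=|\cos\phi|^{2/p}=\cos^{2/p}\phi$, whence $|\cn_p(s,q)|^p=\cos^2\phi$. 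Substituting, the left-hand side becomes
$$\int_0^{\pi/2}\frac{\cos^{3-\frac{2}{p}}\phi}{\sqrt{1-q^2\sin^2\phi}}\,d\phi.$$

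Next I would match this against the right-hand side by writing the latter as a single integral. Using Definition~\ref{def:K_p-E_p},
$$\frac{1}{q^2}\Ecn(q)+\Big(1-\frac{1}{q^2}\Big)\Kcn(q)=\int_0^{\pi/2}\left[\frac{\sqrt{1-q^2\sin^2\phi}}{q^2}+\frac{q^2-1}{q^2}\,\frac{1}{\sqrt{1-q^2\sin^2\phi}}\right]\cos^{1-\frac{2}{p}}\phi\,d\phi.$$
Placing the bracketed factor over the common denominator $q^2\sqrt{1-q^2\sin^2\phi}$, its numerator is $(1-q^2\sin^2\phi)+(q^2-1)=q^2\cos^2\phi$, so the bracket collapses to $\frac{\cos^2\phi}{\sqrt{1-q^2\sin^2\phi}}$. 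Hence the right-hand side equals $\int_0^{\pi/2}\frac{\cos^{3-\frac{2}{p}}\phi}{\sqrt{1-q^2\sin^2\phi}}\,d\phi$, which is exactly the expression obtained above, completing the case $q\in(0,1)$.

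The computation is elementary once the substitution is set up, so the only genuinely delicate point is the boundary case $q=1$ admitted when $p>2$. There the substitution remains valid with $\sqrt{1-\sin^2\phi}=\cos\phi$ on $[0,\pi/2]$; both sides reduce to $\int_0^{\pi/2}\cos^{2-\frac{2}{p}}\phi\,d\phi$ (the $\Kcn$-coefficient vanishing), and this integral converges since $2-\frac{2}{p}>0$. One must also check that $\amcn(\cdot,1)$ is an admissible change of variable up to the boundary $\phi=\pi/2$, which follows from the finiteness of $\Kcn(1)=\K_p(1)$ for $p>2$ recorded in Definition~\ref{def:K_p-E_p}; the same finiteness furnishes an integrable majorant, so alternatively the $q=1$ identity follows from the $q\in(0,1)$ case by letting $q\uparrow1$ and invoking dominated convergence.
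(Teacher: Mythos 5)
Your proof is correct. The paper itself gives no argument for this lemma --- it is imported verbatim from \cite{MYarXiv2209}*{Lemma 4.2} --- so there is nothing in the present text to compare against, but your computation is the natural one and is complete: the substitution $\phi=\amcn(s,q)$ turns $|\cn_p(s,q)|^p$ into $\cos^2\phi$ against the weight $|\cos\phi|^{1-\frac{2}{p}}(1-q^2\sin^2\phi)^{-1/2}$, and the algebraic identity $(1-q^2\sin^2\phi)+(q^2-1)=q^2\cos^2\phi$ matches this with $\frac{1}{q^2}\Ecn(q)+(1-\frac{1}{q^2})\Kcn(q)$; your treatment of the boundary case $q=1$, $p>2$ via the finiteness of $\Kcn(1)$ is also sound.
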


We now prove the main theorem for global minimizers.

\begin{theorem}[Minimal hooked $p$-elasticae]\label{thm:min_free_half}
    Let $p\in(1,\infty)$, $0<\ell<L$, and $\gamma\in\Ah$.
\begin{itemize}
    \item[(i)] If $p\in(1,2]$, or if $p\in(2,\infty)$ and $\ell\in(0,\frac{1}{p-1}L)$, then for the unique solution $q\in(0,1)$ to \eqref{eq:wave-loop-r}, 
    \begin{equation}\label{eq:minimal-loop}
        \B_p[\gamma] \geq (2q)^p\Kcn(q)^{p-1}\left( \frac{1}{q^2}\Ecn(q)+\left(1-\frac{1}{q^2}\right)\Kcn(q) \right)\frac{1}{L^{p-1}},
    \end{equation}
    where equality holds if and only if $\gamma$ is given by \eqref{eq:hooked-wave} with $n=1$, up to vertical translation and reflection.
    \item[(ii)] If $p\in(2,\infty)$ and $\ell\in[\frac{1}{p-1}L,L)$, then
    \begin{equation}\label{eq:minimal-flat}
        \B_p[\gamma] \geq 2^p\Kcn(1)^{p-1}\Ecn(1) \left(\frac{p-2}{p-1} \right)^{p-1}\frac{1}{(L-\ell)^{p-1}},
    \end{equation}
    where equality holds if and only if $\gamma$ is given by \eqref{eq:hooked-flat} with $n=1$, up to vertical translation (and reflection).
\end{itemize}
\end{theorem}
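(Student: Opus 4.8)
The plan is to combine the existence of a global minimizer with the complete classification of hooked $p$-elasticae, thereby reducing the inequality to an explicit energy comparison within the classified family. First I would invoke Proposition~\ref{prop:direct-method-Ah} to obtain a minimizer $\gamma_*\in\Ah$ of $\B_p$. Since $\gamma_*$ is a global, hence local, minimizer, it satisfies the first-order necessary condition and is therefore a hooked $p$-elastica in the sense of Definition~\ref{def:hooked-p-elastica*} (the derivation being as in Appendix~\ref{sect:appendix-A}). Consequently Theorem~\ref{thm:classify-hook} applies: in the regime of (i) every hooked $p$-elastica is wavelike of the form \eqref{eq:hooked-wave} for some $n\in\N$, and in the regime of (ii) every one is flat-core of the form \eqref{eq:hooked-flat} for some $n\in\N$. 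The crux is then to evaluate $\B_p$ on each explicit curve, show the energy equals $(2n-1)^p$ times the right-hand side of \eqref{eq:minimal-loop} resp.\ \eqref{eq:minimal-flat}, and deduce that $n=1$ is optimal.

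For the wavelike case I would exploit the scaling structure of \eqref{eq:hooked-wave}: dilation by $1/\alpha_n$ scales curvature by $\alpha_n$ while rotation and reflection leave $|k|$ invariant, so Proposition~\ref{prop:MY2203-thm1.1} gives $|k(s)|=2q\alpha_n|\cn_p(\alpha_n s+\Kcn(q),q)|$. After the substitution $u=\alpha_n s+\Kcn(q)$, the energy becomes $(2q)^p\alpha_n^{p-1}\int_{\Kcn(q)}^{2n\Kcn(q)}|\cn_p(u,q)|^p\,du$, the upper limit following from $\alpha_n L=(2n-1)\Kcn(q)$. Here I would use the evenness and $2\Kcn(q)$-antiperiodicity of $\cn_p$ from Proposition~\ref{prop:property-cndn}(i) to see that $|\cn_p|^p$ integrates to the same value over each interval of length $\Kcn(q)$, so the integral equals $(2n-1)\int_0^{\Kcn(q)}|\cn_p|^p$, evaluated by Lemma~\ref{lem:int-cn_p}. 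Together with $\alpha_n=(2n-1)\Kcn(q)/L$ this yields $\B_p[\gamma]=(2n-1)^p$ times the right-hand side of \eqref{eq:minimal-loop}, the factors $(2n-1)^{p-1}$ from $\alpha_n^{p-1}$ and $(2n-1)$ from the interval count combining into $(2n-1)^p$.

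For the flat-core case I would decompose $\Gamma_n$ from \eqref{eq:def-Gamma_n} into its $(n-1)$ full loops $\gamma_b^{\sigma_j}$, its terminal half-loop $\gamma_b^{\sigma_n}|_{[-\Kcn(1),0]}$, and the zero-curvature segments $\gamma_\ell^{L_j}$. Using $|k_b^\pm|=2\sech_p$ and the evenness of $\sech_p$, and noting that Lemma~\ref{lem:int-cn_p} at $q=1$ gives $\int_0^{\Kcn(1)}(\sech_p)^p=\Ecn(1)$, each full loop contributes $2\cdot 2^p\Ecn(1)$ and the half-loop contributes $2^p\Ecn(1)$, while the segments contribute nothing. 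Thus the unscaled energy of $\Gamma_n$ is $2^p\Ecn(1)(2n-1)$, \emph{independent} of the signs $\boldsymbol{\sigma}$ and the segment lengths $\boldsymbol{L}$. Rescaling by $\bar\alpha_n^{p-1}$ with $\bar\alpha_n$ from \eqref{eq:hooked-flat-alpha} again produces the clean factor $(2n-1)^p$ multiplying the right-hand side of \eqref{eq:minimal-flat}.

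In both regimes the energy of a hooked $p$-elastica thus equals $(2n-1)^p$ times the asserted bound with $n\in\N$; since $(2n-1)^p\geq1$ with equality iff $n=1$, the minimizer $\gamma_*$ must correspond to $n=1$, whence $\inf_{\Ah}\B_p$ equals the right-hand side of \eqref{eq:minimal-loop} resp.\ \eqref{eq:minimal-flat}. This gives the lower bound for all $\gamma\in\Ah$, and equality forces $\gamma$ to be a minimizer, hence the $n=1$ hooked $p$-elastica, unique up to vertical translation and reflection (at $n=1$ the constraint \eqref{eq:sum-flatparts-Ah} determines $L_1$, and the two choices of $\sigma_1$ are related by reflection). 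The main subtlety I anticipate is the flat-core bookkeeping: one must verify that the energy genuinely collapses to depend only on $n$ across the entire simplex \eqref{eq:sum-flatparts-Ah}, and that the $n$-dependence of the scaling $\bar\alpha_n$ conspires with the $(2n-1)$-fold loop count to produce exactly $(2n-1)^p$, so that no interior configuration can undercut the single-loop energy.
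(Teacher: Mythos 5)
Your proposal is correct and follows essentially the same route as the paper's proof: existence via Proposition~\ref{prop:direct-method-Ah}, reduction to the classified family of Theorem~\ref{thm:classify-hook}, explicit evaluation of the energy via the periodicity/symmetry of $\cn_p$ and $\sech_p$ together with Lemma~\ref{lem:int-cn_p}, and optimality of $n=1$ from the factor $(2n-1)^p$. Your bookkeeping in the flat-core case (energy independent of $\boldsymbol{\sigma}$ and $\boldsymbol{L}$, with $\bar\alpha_n^{p-1}$ and the $(2n-1)$ half-loop count combining into $(2n-1)^p$) matches the paper's computation exactly.
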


\begin{proof}
The existence of minimizers follows from Proposition~\ref{prop:direct-method-Ah}.
Fix any minimizer $\gamma \in \Ah$. 
Then $\gamma$ is a hooked $p$-elastica (cf.\ Appendix~\ref{sect:appendix-A}).
We divide the proof into two cases along the classification for hooked $p$-elasticae in Theorem~\ref{thm:classify-hook}. 

First we consider case (i).
In this case, up to translation and reflection, $\gamma$ is given by \eqref{eq:hooked-wave} for some $n\in \N$, and  the signed curvature $k$ of $\gamma$ is 
\[
k(s)=2q\alpha_n\cn_p \big(\alpha_n s+\K_{1,p}(q), q\big), \quad s\in[0,L]. 
\]
Therefore we have
\begin{align*}
\mathcal{B}_p[\gamma] 
&=(2q)^p\alpha_n^{p-1}(2n-1)\int_0^{\K_{1,p}(q)}|\cn_p(x,q)|^p\,dx,
\end{align*}
where we used the symmetry and periodicity of $\cn_p$ in Proposition~\ref{prop:property-cndn}. 
The case $n=1$ corresponds to a unique minimizer, and Lemma~\ref{lem:int-cn_p} implies \eqref{eq:minimal-loop}.

Next we address case (ii).
In this case, up to a vertical translation (and reflection), the curve $\gamma$ is given by \eqref{eq:hooked-flat} with \eqref{eq:hooked-flat-alpha} for some $n\in \N$.
Then, the signed curvature $k$ of $\gamma$ is 
\[
k(s)=\bar\alpha_n k_f(\bar\alpha_n s), \quad k_f(s)=\sum_{j=1}^n 2\sigma_j \sech_p(s-s_j),
\]
where $s_j:=(2j-1)\K_p(1)+\sum_{i=1}^j L_i$.
Since $s_n=\bar\alpha_n L$ holds as in the proof of Theorem~\ref{thm:classify-hook}, we have 
\begin{align*}
\mathcal{B}_p[\gamma]
=& 
\sum_{j=1}^{n-1}\int_{s_j-\K_p(1)}^{s_j+\K_p(1)}2^p\bar\alpha_n^{p-1} |\sech_p(s-s_j)|^p\,ds \\
&\qquad+\int_{s_n-\K_p(1)}^{s_n}2^p\bar\alpha_n^{p-1} |\sech_p(s-s_n)|^p\,ds 
\end{align*}
(the first sum is interpreted as $0$ if $n=1$).
From the symmetry and periodicity of $\sech_p$ in Proposition~\ref{prop:property-cndn} we deduce that 
\begin{align*}
\mathcal{B}_p[\gamma] =2^p \bar\alpha_n^{p-1} (2n-1) \int_0^{\K_p(1)}|\sech_p{s}|^p\,ds.
\end{align*}
The case $n=1$ corresponds to a unique minimizer, and Lemma~\ref{lem:int-cn_p} (with $q=1$) implies \eqref{eq:minimal-flat}.
\end{proof}

\begin{remark}\label{rem:min_free_half}
    By symmetry the same result also holds for $\Ah$ replaced with
    \begin{align*}
        \begin{split}
            \Ah' &= \Ah'(\ell,L)\\
            &:=\Set{ \gamma \in W^{2,p}_\mathrm{arc}(0,L; \R^2) | (\gamma(L)-\gamma(0))\cdot e_1 =\ell,\ \gamma'(0)=-e_1 }.
        \end{split}
    \end{align*}
\end{remark}

\begin{remark}
    For the proof of Theorem \ref{thm:alternating_stability} (Theorem \ref{thm:alternating_detailed}) in the next section, we will use Theorem \ref{thm:min_free_half} only in the case that $p\in(2,\infty)$ and $\ell\in[\frac{1}{p-1}L,L)$.
    However, our full classification results in this section would be useful to highlight the idiosyncrasy of the degenerate case under consideration.
\end{remark}

\section{Stability of alternating flat-core $p$-elasticae}\label{sect:proof-MainResults}

In this section, we prove the desired stability of alternating flat-core $p$-elasticae.
More precisely, by applying Theorem~\ref{thm:min_free_half} we prove the following

\begin{theorem}\label{thm:alternating_detailed}
    Let $p\in(2,\infty)$, $P_0, P_1\in \R^2$, and $L>0$ such that $\frac{L}{p-1} < |P_1-P_0|<L$.
    Let $N\in\N$ and $\gamma\in \Ap(P_0, P_1, L)$ be an $N$-loop alternating flat-core $p$-elastica (see Definition \ref{def:alternating}).
    Then $\gamma$ is a local minimizer of $\mathcal{B}_p$ in $\Ap(P_0, P_1, L)$.
\end{theorem}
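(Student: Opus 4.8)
The plan is to realize the local minimality of an $N$-loop alternating flat-core $p$-elastica $\gamma$ by decomposing it into finitely many pieces, each of which is a \emph{global} minimizer in a hooked (or reflected-hooked) class, and then to show that any admissible $W^{2,p}$-perturbation of $\gamma$ in $\Ap$ cannot lower the total energy below $\B_p[\gamma]$. Concretely, I would first cut $\gamma$ at the apex (the curvature maximum, i.e.\ $s=s_j$) of each of its $N$ loops and at one interior point of each of the $N+1$ inner segments $\gamma_\ell^{L_j}$; since all $L_j>0$ by Definition~\ref{def:alternating}, these interior cut points genuinely exist. At each loop apex the curvature satisfies $k\neq 0$ and $k'=0$, while at each segment cut point $k=0$; these are exactly the conditions encoded by the hooked boundary condition $w(0)=w'(L)=0$ together with $\gamma'=-e_1$ at the cut end (after a rotation aligning the tangent). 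Thus each piece lies in a class of the form $\Ah$ or $\Ah'$ (cf.\ Remark~\ref{rem:min_free_half}), with its own length and horizontal span.

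Next I would verify that on each piece, $\gamma$ restricted to that piece is in fact the unique global minimizer of $\B_p$ in the corresponding hooked class. This is where Theorem~\ref{thm:min_free_half}(ii) does the essential work: because we are in the regime $p>2$ and (by the flat-core constraint \eqref{eq:sum-flatparts}) each half-loop piece has horizontal span $\ell$ satisfying $\ell\in[\tfrac{1}{p-1}L,L)$ relative to its own length $L$, the minimal hooked $p$-elastica is precisely the single half-loop $\gamma_b^{\sigma}|_{[-\K_p(1),0]}$ preceded by a segment (the $n=1$ case of \eqref{eq:hooked-flat}), and its energy equals the explicit right-hand side of \eqref{eq:minimal-flat}. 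Summing these sharp lower bounds over all $2N$ half-loop pieces (each loop contributing two reflected halves) and noting the segments contribute zero energy gives a lower bound on $\B_p[\eta]$ for any competitor $\eta$ whose restriction to each piece stays in the corresponding hooked class.

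The crux, and the main obstacle, is the \emph{relaxation} step: showing that a genuine $W^{2,p}$-small perturbation $\eta\in\Ap$ of $\gamma$, when cut at the \emph{same} arclength parameters, produces pieces that are admissible competitors in the relevant hooked classes, so that the piecewise sharp bounds apply and sum to at least $\B_p[\gamma]$. The subtlety is that the hooked class fixes only the horizontal span $(\eta(L)-\eta(0))\cdot e_1=\ell$ and the terminal tangent $\eta'=-e_1$, not the full endpoint data, so I must argue that the admissibility constraints on the whole curve $\eta\in\Ap$ (fixed endpoints $P_0,P_1$, total length $L$) force, in the $C^1$-neighborhood guaranteed by $W^{2,p}\hookrightarrow C^1$, that the horizontal spans of the individual pieces add up correctly and that each terminal tangent is close enough to $-e_1$ to invoke the relaxed bound. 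Here I would use the key point flagged in the introduction: the hooked condition is relaxed enough to admit even discontinuous competitors (Lemma~\ref{lem:relaxation}), yet not so relaxed that the reconstruction fails, with the final reconciliation of the span constraint reduced to an elementary convexity (Jensen) argument that exploits the degeneracy $p>2$ to penalize any redistribution of curvature away from the flat-core profile. Establishing that this local-to-global reduction is faithful—i.e.\ that equality in the summed bound forces $\eta=\gamma$ up to the allowed invariances, or at least that $\B_p[\eta]\geq\B_p[\gamma]$ holds on a full $W^{2,p}$-neighborhood—is the step I expect to require the most care.
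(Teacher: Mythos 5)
Your overall strategy coincides with the paper's: cut the curve at the loop apexes and at interior points of the inner segments, apply the sharp global bound of Theorem~\ref{thm:min_free_half}(ii) (and its reflected version, Remark~\ref{rem:min_free_half}) to each piece, and recombine via Jensen's inequality applied to $x\mapsto x^{1-p}$ — this is exactly Lemma~\ref{lem:relaxation} and the proof of Theorem~\ref{thm:alternating_detailed}. The minor deviations (cutting all $N+1$ segments rather than only the $N-1$ inner ones, so that two harmless straight end pieces appear) do not affect the argument, since those pieces have nonnegative energy and only decrease the sum $\sum_i(\tilde L_i-\tilde\ell_i)$, which strengthens the Jensen bound.

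However, there is a genuine gap precisely at the step you flag as the crux, and your proposed resolution would not work as stated. You suggest cutting the perturbed curve $\eta$ at the \emph{same} arclength parameters as $\gamma$ and then invoking the hooked lower bound because each terminal tangent is ``close enough to $-e_1$.'' But Theorem~\ref{thm:min_free_half} requires the exact condition $\eta'(s_{\mathrm{cut}})=-e_1$; there is no approximate version of the bound, and proving quantitative stability of the minimal energy under perturbed boundary data would be a separate, nontrivial task. The paper instead \emph{moves the cut points}: since $|k(s_{2i})|=2\neq 0$ at each apex, the tangential angle of $\gamma$ is strictly monotone there and crosses the value $\pi$; by the $C^1$-convergence $\gamma_n\to\gamma$ one finds $s_{2i,n}\to s_{2i}$ with $\gamma_n'(s_{2i,n})=-e_1$ \emph{exactly}, while the segment cut points are kept fixed. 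With this adjusted partition each piece of $\gamma_n$ lies exactly in some $\Ah(\tilde\ell_{i,n},\tilde L_{i,n})$ or $\Ah'(\tilde\ell_{i,n},\tilde L_{i,n})$; the constraints $\sum_i\tilde L_{i,n}=L$ and $\sum_i\tilde\ell_{i,n}=\ell$ hold automatically from $\gamma_n\in\Ap$, and the open condition $\frac{1}{p-1}\tilde L_{i,n}<\tilde\ell_{i,n}<\tilde L_{i,n}$ survives the perturbation precisely because all $L_j>0$ (this, not merely the existence of interior cut points, is where the alternating hypothesis is essential: it keeps every piece strictly inside the flat-core regime of Theorem~\ref{thm:min_free_half}(ii)). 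Without the exact tangent condition at the cuts, the reduction to the hooked minimization problem — and hence the whole proof — does not go through.
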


The following is a key lemma deduced from Theorem~\ref{thm:min_free_half}. 

\begin{lemma}\label{lem:relaxation}
    Let $p\in(2,\infty)$, $L>0$, $\ell\in(\frac{1}{p-1}L,L)$, and $N$ be a positive integer.
    Let $\{\gamma_i\}_{i=1}^N$ be an $N$-tuple of curves $\gamma_i\in \Ah(\ell_i,L_i)\cup\Ah'(\ell_i,L_i)$ with $L_i>0$ and $\ell_i\in[\frac{1}{p-1}L_i,L_i)$ such that $\sum_{i=1}^N L_i = L$ and $\sum_{i=1}^N\ell_i = \ell$.
    Then
    \begin{equation*}
        \sum_{i=1}^N \B_p[\gamma_i] \geq \frac{C_pN^p}{(L-\ell)^{p-1}},
    \end{equation*}
    where $C_p:=2^p\Kcn(1)^{p-1}\Ecn(1)(\frac{p-2}{p-1})^{p-1}$.
\end{lemma}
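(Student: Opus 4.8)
The plan is to reduce the vector-valued sum to a scalar inequality by applying the sharp per-piece energy bound of Theorem~\ref{thm:min_free_half}, and then to close the argument by a convexity (Jensen) estimate. Since each $\gamma_i$ lies in $\Ah(\ell_i,L_i)\cup\Ah'(\ell_i,L_i)$ with $\ell_i\in[\frac{1}{p-1}L_i,L_i)$, we are precisely in the regime of Theorem~\ref{thm:min_free_half}(ii); invoking that bound with $(L,\ell)$ replaced by $(L_i,\ell_i)$, and using Remark~\ref{rem:min_free_half} to cover those indices with $\gamma_i\in\Ah'$, yields
\[
\B_p[\gamma_i]\geq \frac{C_p}{(L_i-\ell_i)^{p-1}}, \qquad i=1,\dots,N.
\]

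Next I would introduce the gap variables $d_i:=L_i-\ell_i>0$ and note that the two constraints $\sum_i L_i=L$ and $\sum_i \ell_i=\ell$ collapse to the single conserved quantity $\sum_{i=1}^N d_i = L-\ell$. Summing the per-piece bounds, the lemma is then reduced to establishing the purely scalar inequality
\[
\sum_{i=1}^N d_i^{-(p-1)} \geq \frac{N^p}{(L-\ell)^{p-1}} = N\left(\frac{L-\ell}{N}\right)^{-(p-1)}.
\]

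The only substantive step is then Jensen's inequality applied to $f(t)=t^{-(p-1)}$ on $(0,\infty)$, which is convex because $f''(t)=p(p-1)t^{-p-1}>0$. Convexity gives $\frac{1}{N}\sum_{i=1}^N f(d_i)\geq f\!\left(\frac{1}{N}\sum_{i=1}^N d_i\right)=f\!\left(\frac{L-\ell}{N}\right)$, which is exactly the displayed scalar inequality; multiplying through by $C_p$ delivers the claim. I do not expect any genuine obstacle in this lemma itself: its analytic content is already fully absorbed into Theorem~\ref{thm:min_free_half}, whose proof is where the degenerate structure ($p>2$) enters, through the finite flat-core energy constant $C_p$ (note $C_p>0$ precisely because $p>2$, via the factor $(\frac{p-2}{p-1})^{p-1}$ and the finiteness of $\Kcn(1)$). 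The present statement is the elementary consequence of combining that bound with the convexity of $t\mapsto t^{-(p-1)}$, which is where the effect of degeneracy on the aggregate energy is crystallized.
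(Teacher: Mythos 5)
Your proof is correct and follows essentially the same route as the paper: apply Theorem~\ref{thm:min_free_half}(ii) (with Remark~\ref{rem:min_free_half} for the $\Ah'$ pieces) to get $\B_p[\gamma_i]\geq C_p(L_i-\ell_i)^{1-p}$, then use Jensen's inequality for the convex function $t\mapsto t^{1-p}$ together with $\sum_i(L_i-\ell_i)=L-\ell$. No gaps.
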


\begin{proof}
    For each $i$ we apply Theorem \ref{thm:min_free_half} or Remark \ref{rem:min_free_half} to deduce that
    \begin{equation*}
        \sum_{i=1}^N\B_p[\gamma_i]\geq C_p\sum_{i=1}^{N}(L_i-\ell_i)^{1-p}.
    \end{equation*}
    Then Jensen's inequality applied to the convex function $x\mapsto x^{1-p}$ implies that 
    $$C_p\sum_{i=1}^{N}(L_i-\ell_i)^{1-p} \geq C_pN \left( \frac{1}{N}\sum_{i=1}^{N}(L_i-\ell_i) \right)^{1-p}= C_pN^p(L-\ell)^{1-p},$$
    where in the last equality we have used $\sum_{i=1}^NL_i=L$ and $\sum_{i=1}^N\ell_i = \ell$.
\end{proof}

We are now in a position to complete the proof of Theorem~\ref{thm:alternating_detailed}.

\begin{proof}[Proof of Theorem~\ref{thm:alternating_detailed}]
    Fix any alternating flat-core $p$-elastica $\gamma$.
    Up to similarity and reparametrization, we may only consider the case that
    \begin{align}\label{eq:alternating-model}
        \gamma = R_\pi \bigg( \Big( \bigoplus_{j=1}^N \big(\gamma_\ell^{L_j} \oplus \gamma_b^{\sigma_j}\big) \Big) \oplus \gamma_\ell^{L_{N+1}} \bigg),
    \end{align}
    for some $N\in \N$, $\{\sigma_j\}_{j=1}^N \subset\{+,- \}$, and $L_1, \ldots, L_{N+1}>0$.
    In this case, we have $\gamma\in \Ap(P_0,P_1, L)$ with $P_0:=(0,0)$, $P_1:=(\ell,0)$, $\ell:= \frac{2N}{p-1}\K_p(1)+\sum_{j=1}^{N+1} L_j$, and $L:=2N\K_p(1) + \sum_{j=1}^{N+1} L_j$.
    Since the signed curvature $k$ of $\gamma$ is given by $k(s)=\sum_{j=1}^N 2\sigma_j \sech_p(s-s_j)$ with $s_j:=(2j-1)\K_p(1)+\sum_{i=1}^j L_i$, we can explicitly compute (using Lemma~\ref{lem:int-cn_p}) 
    \begin{align}\label{eq:0710-9}
    \begin{split}
        \B_p[\gamma]&= \sum_{j=1}^{N}\int_{s_j-\K_p(1)}^{s_j+\K_p(1)} \big(2\sech_p (s-s_j)\big)^p\,ds \\
        &= 2^{p+1}N \E_{1,p}(1) = C_p\frac{(2N)^p}{(L-\ell)^{p-1}},
    \end{split}
    \end{align}
    where $C_p=2^p\Kcn(1)^{p-1}\Ecn(1)(\frac{p-2}{p-1})^{p-1}$ as in Lemma \ref{lem:relaxation}.
    
    Now we prove that the above 
    $\gamma$ is a local minimizer of $\B_p$ in $\Ap$.
    Take an arbitrary sequence $\{\gamma_\nu\}_{\nu\in\N}\subset\Ap$ such that $\gamma_\nu\to\gamma$ in $W^{2,p}(0,L;\R^2)$, and hence also in $C^1([0,L];\R^2)$.
    It suffices to prove that $\B_p[\gamma_\nu]\geq\B_p[\gamma]$ holds for all large $\nu$.

    Choose a partition $\{s_i\}_{i=1}^{2N+1}$ of $[0,L]$ as follows (see Figure~\ref{alternating-partition}).
    Let $s_1:=0$, $s_{2N+1}:=L$, $s_{2i}:=(2i-1)\K_p(1)+\sum_{m=1}^{i}L_m$ for $i\in\{1,\dots,N\}$, which corresponds to the midpoint of the $i$-th loop $\gamma_b^{\sigma_i}$, and $s_{2i-1}:=2(i-1)\K_p(1)+\sum_{m=1}^{i-1}L_m+\frac{1}{2}L_{i}$ for $i\in\{2,\dots,N\}$, which corresponds to the midpoint of the $i$-th segment $\gamma_\ell^{L_i}$.
    Let
    \begin{equation}\label{eq:0710-2}
    \tilde{L}_i:=s_{i+1}-s_{i}, \quad \tilde{\ell}_i:=\big(\gamma(s_{i+1})-\gamma(s_{i})\big)\cdot e_1 \quad (1\leq i\leq 2N).
    \end{equation}
    Then, by formula \eqref{eq:alternating-model} (cf.\ Figures~\ref{fig:gamma_b_v2} and \ref{alternating-partition}) we deduce that
    \begin{equation}\label{eq:0710-3}
        \tfrac{1}{p-1}\tilde{L}_i<\tilde{\ell}_i<\tilde{L}_i \quad (1\leq i\leq 2N),
    \end{equation}
    and also that the tangent vector $\gamma'$ and the curvature $k$ of $\gamma$ satisfy
    \begin{equation}\nonumber\label{eq:0710-4}
        \gamma'(s_{2i})=-e_1, \quad |k(s_{2i})|=2\neq0 \quad (1\leq i\leq N).
    \end{equation}
    This together with the $C^1$-convergence $\gamma_\nu\to\gamma$ implies that we can pick sequences $\{s_{2i,\nu}\}_{\nu=1}^\infty\subset[0,L]$ such that 
    \begin{equation}\label{eq:0710-5}
        \gamma_\nu'(s_{2i,\nu})=-e_1, \quad \lim_{\nu\to\infty}s_{2i,\nu}=s_{2i} \quad  (1\leq i\leq N).
    \end{equation}
    Now, for all large $\nu$ we can define a partition $\{s_{i,\nu}\}_{i=1}^{2N+1}$ of $[0,L]$ by taking $s_{1,\nu}:=0$, $s_{2N+1,\nu}:=L$, and $s_{2i-1,\nu}:=s_{2i-1}$ for $i\in\{2,\dots,N\}$.
    Let
    \begin{equation}\label{eq:0710-6}
        \tilde{L}_{i,\nu}:=s_{i+1,\nu}-s_{i,\nu}, \quad \tilde{\ell}_{i,\nu}:=\big(\gamma_\nu(s_{i+1,\nu})-\gamma_\nu(s_{i,\nu})\big)\cdot e_1 \quad (1\leq i\leq 2N).
    \end{equation}
    Then, since $\gamma_\nu\in\Ap(P_0,P_1,L)$ with $(P_1-P_0)\cdot e_1=\ell$, we have
    \begin{equation}\label{eq:0710-7}
        \sum_{i=1}^{2N}\tilde{L}_{i,\nu}=L, \quad \sum_{i=1}^{2N}\tilde{\ell}_{i,\nu}=\ell.
    \end{equation}
    In addition, by \eqref{eq:0710-2}, \eqref{eq:0710-3}, \eqref{eq:0710-5}, and the $C^0$-convergence $\gamma_\nu\to\gamma$ we deduce that for all large $\nu$,
    \begin{equation}\label{eq:0710-8}
        \tfrac{1}{p-1}\tilde{L}_{i,\nu}<\tilde{\ell}
_{i,\nu}<\tilde{L}_{i,\nu} \quad (1\leq i\leq 2N).
    \end{equation}
    Therefore, in view of \eqref{eq:0710-5}, \eqref{eq:0710-6}, \eqref{eq:0710-7}, and \eqref{eq:0710-8}, for all large $\nu$ the curve $\gamma_\nu\in\Ap$ has a $2N$ partition $\{\gamma_\nu|_{[s_{i,\nu},s_{i+1,\nu}]}\}_{i=1}^{2N}$ as in the assumption of Lemma \ref{lem:relaxation} (with $N$ replaced by $2N$), and hence
    $$\B_p[\gamma_\nu] = \sum_{i=1}^{2N}\B_p[\gamma_\nu|_{[s_{i,\nu},s_{i+1,\nu}]}] \geq \frac{C_p(2N)^p}{(L-\ell)^{p-1}}=\B_p[\gamma],$$
    where in the last part we used \eqref{eq:0710-9}.
    The proof is complete.
\end{proof}

\begin{figure}[htbp]
\centering
\includegraphics[width=80mm]{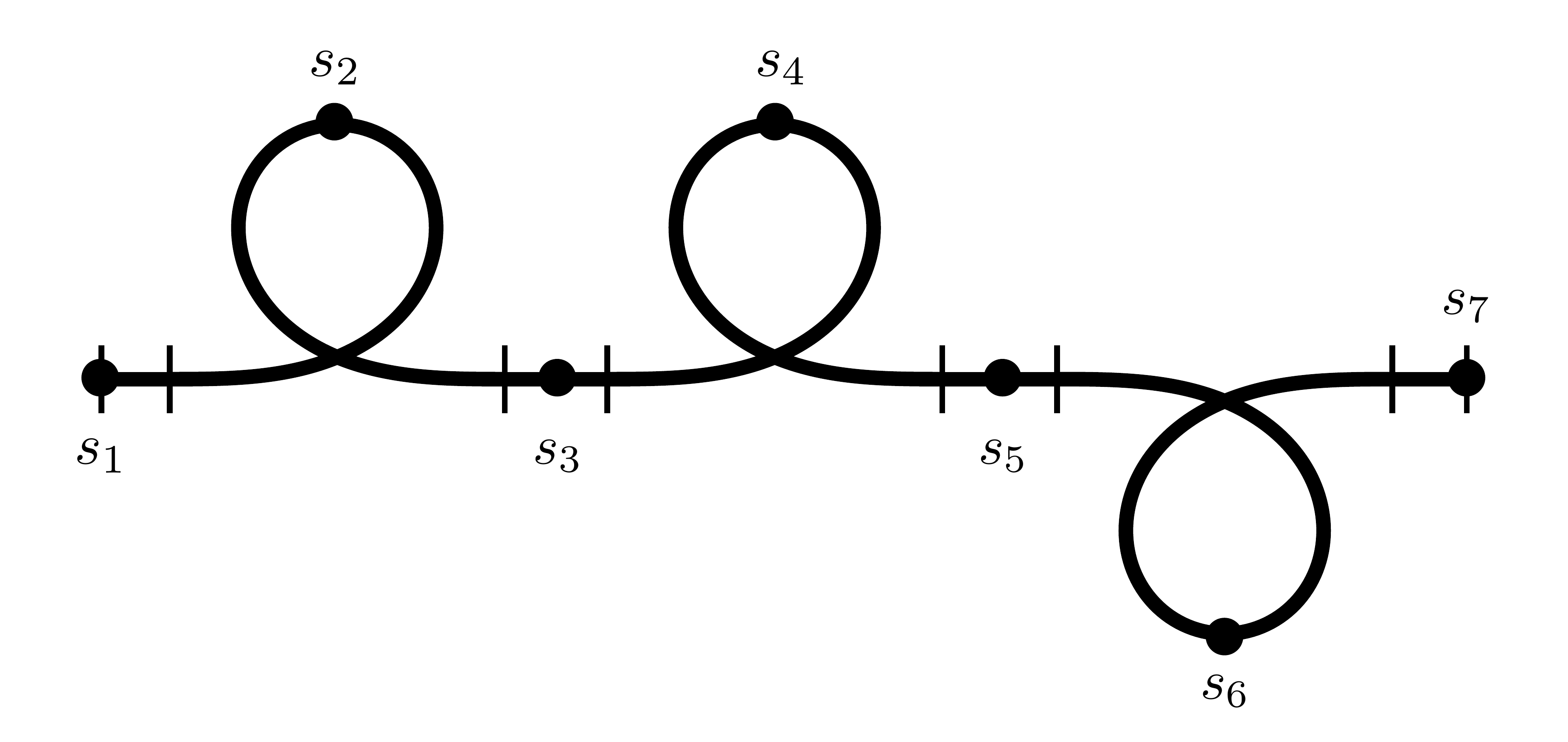} 
\caption{Decomposition of an alternating flat-core $p$-elastica.}
\label{alternating-partition}
\end{figure}

\begin{proof}[Proof of Theorem \ref{thm:main_stabilization}]
    This immediately follows by Theorem \ref{thm:alternating_detailed}. In particular, the uncountability follows by the freedom of $\boldsymbol{L}$, while the divergence of the energy follows by taking $N\to\infty$ in \eqref{eq:0710-9}.
\end{proof}

\begin{remark}[Rigidity]
    In fact our argument can also imply the following rigidity: If a curve $\gamma\in\Ap$ lies in a small neighborhood of a given alternating flat-core $p$-elastica $\bar{\gamma}$, and if $\B_p[\gamma]=\B_p[\bar{\gamma}]$, then $\gamma$ is also an alternating flat-core $p$-elastica (possibly different from $\bar{\gamma}$).
    This is mainly due to the uniqueness in Theorem \ref{thm:min_free_half}.
\end{remark}

\appendix
\section{First variation arguments for hooked $p$-elasticae}\label{sect:appendix-A}

Here we check that any (local) minimizer of $\mathcal{B}_p$ in $\Ah(\ell,L)$ is a hooked $p$-elastica. 
Let $W^{2,p}_{\rm imm}(0,1;\R^2)$ denote the set of immersed $W^{2,p}$-curves, i.e.,  
\[
W^{2,p}_{\rm imm}(0,1;\R^2):=\Set{ \gamma \in W^{2,p}(0,1;\R^2) |\, |\gamma'(t)|\neq0 \ \ \text{for all} \ \ t\in[0,1] }, 
\]
and define an immersed counterpart of $\Ah$ by
\begin{align*}
    \begin{split}
            \Ah^* &= \Ah^*(\ell,L)\\
            &:=\Set{ \gamma \in W^{2,p}_\mathrm{imm}(0,1; \R^2) | (\gamma(1)-\gamma(0))\cdot e_1 =\ell,\ \mathcal{L}[\gamma]=L,\ \mathbf{t}(L)=-e_1 },
    \end{split}
\end{align*}
where $\mathcal{L}$ denotes the length functional, i.e., $\mathcal{L}[\gamma]:=\int_\gamma\,ds$, and $\mathbf{t}:[0,L]\to\mathbf{S}^1$ denotes the unit tangent.
It is clear that if $\gamma$ is a (local) minimizer of $\mathcal{B}_p$ in $\Ah(\ell,L)$, then the curve $\bar{\gamma}$ defined by $\bar{\gamma}(t):=\gamma(Lt)$ for $t\in[0,1]$ is a (local) minimizer of $\mathcal{B}_p$ in $\Ah^*$, thus in particular a critical point of $\mathcal{B}_p$ in $\Ah^*$. 
Here we define:
\begin{itemize}
\item For $\gamma\in \Ah^*$, we call a one-parameter family $\varepsilon\mapsto \gamma_\varepsilon \in \Ah^*$ \emph{admissible perturbation} of $\gamma$ in $\Ah^*$ if $\gamma_0=\gamma$ and if the derivative $\frac{d}{d\varepsilon}\gamma_\varepsilon\big|_{\varepsilon=0}$ exists.
\item We say that $\gamma\in \Ah^*$ is a \emph{critical point} of $\mathcal{B}_p$ in  $\Ah^*$ if
    for any admissible perturbation $(\varepsilon\mapsto\gamma_\varepsilon)$ of $\gamma$ in $\Ah^*$ the first variation of $\mathcal{B}_p$ vanishes:
\begin{align}\notag
\frac{d}{d\varepsilon}\mathcal{B}_p[\gamma_\varepsilon]\Big|_{\varepsilon=0}=0.
\end{align}
\end{itemize}
Hence, in order to check that any (local) minimizer of $\mathcal{B}_p$ in $\Ah$ is a hooked $p$-elastica, it suffices to show that the arclength parametrized signed curvature $k$ of any critical point of $\mathcal{B}_p$ in $\Ah^*$ satisfies the Euler--Lagrange equation \eqref{eq:EL} and that $w:=|k|^{p-2}k$ satisfies $w(0)=w'(L)=0$. 
(Recall $w \in C^2([0,L])$ by  Proposition~\ref{prop:regularity-w}.)

First we deduce a weak form of the Euler--Lagrange equation for hooked $p$-elasticae.
Note carefully that, although the general flow of the argument below is similar to our previous studies \cite{MY_AMPA,MR4852697}, we need a slightly different approximation procedure since the boundary condition is of higher order.

\begin{lemma}[The Euler--Lagrange equation for hooked $p$-elasticae]\label{lem:EL-hooked}
Let $\gamma \in \Ah^*$ be a critical point of $\mathcal{B}_p$ in $\Ah^*$.
Then the (arclength parametrized) signed curvature $k$ of $\gamma$ satisfies $k\in L^\infty(0,L)$ and there exists $\lambda\in \R$ such that 
$k$ satisfies
\eqref{eq:EL} 
for all $\varphi \in W^{2,p}(0,L)$ with $\varphi(0)=0$ and $\varphi'(L)=0$.
\end{lemma}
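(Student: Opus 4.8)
The plan is to derive \eqref{eq:EL} by computing the first variation of $\mathcal{B}_p$ along admissible perturbations and absorbing the length constraint into the single multiplier $\lambda$. After reparametrizing the critical point by arclength, $\gamma:[0,L]\to\R^2$, the first variation is rigorously taken by differentiating the parametrized energy $\int_0^1|k_{\gamma}|^p|\gamma'|\,dt$ along perturbations $V\in W^{2,p}$, which is finite since the relevant coefficients lie in $L^1$ and $L^{p'}$; the structure, however, is most transparent in geometric terms. Since a tangential perturbation only reparametrizes $\gamma$, it suffices to record the effect of a normal perturbation with speed $\varphi$, for which the identities $\delta(ds)=-k\varphi\,ds$ and $\delta k=\varphi''+k^2\varphi$, together with $k^2|k|^{p-2}=|k|^p$, give
\[
\frac{d}{d\varepsilon}\mathcal{B}_p[\gamma_\varepsilon]\Big|_{\varepsilon=0}
=\int_0^L\big(pw\varphi''+(p-1)|k|^pk\,\varphi\big)\,ds,\qquad w:=|k|^{p-2}k.
\]
This is exactly the left-hand side of \eqref{eq:EL} up to the multiplier term, and every derivative falls on $\varphi$, which is why the weak form is stated with $\varphi''$ rather than $w''$.

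The heart of the hooked case is to identify which $\varphi$ keep $\gamma_\varepsilon$ in $\Ah^*$. Because $\mathbf{t}(L)=-e_1$ forces the unit normal to satisfy $\mathbf{n}(L)\perp e_1$, the first-order change of $(\gamma(L)-\gamma(0))\cdot e_1$ along a normal perturbation equals $\varphi(L)\,\mathbf{n}(L)\cdot e_1-\varphi(0)\,\mathbf{n}(0)\cdot e_1=-\varphi(0)\,\mathbf{n}(0)\cdot e_1$, while the first-order rotation of the terminal tangent equals $\varphi'(L)$. Hence $\varphi(0)=0$ is the linearized displacement constraint and $\varphi'(L)=0$ is the linearized constraint $\mathbf{t}(L)=-e_1$; together they guarantee that the perturbation preserves both constraints to first order, and the only constraint genuinely violated at first order is the length, whose variation is $-\int_0^L k\varphi\,ds$. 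This explains precisely the test-function class in the statement.

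To turn these first-order statements into an honest family $\gamma_\varepsilon\in\Ah^*$, I would augment the perturbation by auxiliary fields that independently move the length, the displacement, and the terminal tangent, and then use the implicit function theorem to correct the perturbation so that all three constraints hold exactly (immersion persists for small $\varepsilon$). Upon differentiation the length correction contributes $\lambda\!\int_0^L k\varphi\,ds$, whereas the displacement and tangent corrections can be chosen with vanishing first-order effect on $\mathcal{B}_p$, since their constraints are already met to first order by $\varphi(0)=0$ and $\varphi'(L)=0$. Setting the total first variation to zero and relabelling $\lambda\mapsto-\lambda$ then yields \eqref{eq:EL} for every admissible $\varphi$.

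I expect the main obstacle to be this correction step --- the point flagged in the text. Because $\mathbf{t}(L)=-e_1$ is a first-order boundary condition, the auxiliary field restoring the terminal tangent must control $\gamma_\varepsilon'(L)$, so the endpoint corrections of the pinned case do not transfer verbatim and a modified construction localized near $s=L$ is needed; the crucial input is the surjectivity of the differential of the constraint map $(\text{displacement},\text{length},\text{terminal tangent})$ at $\gamma$, i.e.\ three auxiliary fields with linearly independent constraint effects. Finally, the bound $k\in L^\infty(0,L)$ is not assumed at the outset but obtained along the way: testing the variational identity first against interior perturbations and bootstrapping the regularity of $w$ via \eqref{eq:0610-1}, exactly as for pinned $p$-elasticae in \cite{MYarXiv2209}, upgrades $k$ to a continuous, hence bounded, function, which in turn makes the nonlinear term $(p-1)|k|^pk\,\varphi$ meaningful for all $\varphi\in W^{2,p}(0,L)$.
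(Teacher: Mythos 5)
Your overall strategy coincides with the paper's: a single Lagrange multiplier for the length constraint, with the displacement and terminal-tangent constraints handled by restricting to variations satisfying their linearizations $\varphi(0)=0$ and $\varphi'(L)=0$; your normal-variation formulas, the identification of the linearized constraints via $\mathbf{n}(L)\perp e_1$, and the observation that $k\in L^\infty$ must be obtained first from interior (compactly supported) perturbations before the term $(p-1)|k|^p k\varphi$ is even integrable, all match the paper's argument.

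The genuine gap is at the one step you do not address, and it is exactly the step the paper flags as delicate. Passing from the vector-valued first-variation identity \eqref{eq:0525-1} (valid for $\eta\in W^{2,p}(0,L;\R^2)$ with $(\eta(L)-\eta(0))\cdot e_1=0$ and $\eta'(L)\cdot e_2=0$) to the scalar weak form \eqref{eq:EL} requires substituting $\eta=\varphi\mathbf{n}$, and this is \emph{not} an admissible test direction: $\mathbf{n}'=-k\mathbf{t}$ with $k$ merely continuous (for $p>2$ the curvature of a $p$-elastica is in general not weakly differentiable at its zeros; only $w=|k|^{p-2}k$ is regular), so $\varphi\mathbf{n}$ need not lie in $W^{2,p}$. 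Equivalently, in your implicit-function-theorem formulation the family $\gamma+\varepsilon\varphi\mathbf{n}+\cdots$ would leave $W^{2,p}$, hence cannot be corrected into $\Ah^*$ at all. The paper's fix --- the advertised ``slightly different approximation procedure'' --- is to smooth the curvature, $k_j\to k$ in $L^p$ with $k_j\in C^1$, set $\mathbf{n}_j(s):=\mathbf{n}(0)-\int_0^s k_j\mathbf{t}$, and test with $\eta_j:=\varphi\mathbf{n}_j+r_j$, where $r_j(s)=f(s)\varphi(L)(\mathbf{n}(L)-\mathbf{n}_j(L))$ with $f(0)=f'(0)=f'(L)=0$ and $f(L)=1$ restores \emph{both} hooked boundary constraints for each $j$ (here $f'(L)=0$, $\varphi'(L)=0$, and $\mathbf{t}(L)=-e_1$ together keep $\eta_j'(L)\cdot e_2=0$), before passing to the limit $j\to\infty$. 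Your ``correction step'' discussion instead concerns the nonlinear restoration of the constraints for the perturbed family, which is a different issue and is already absorbed by the multiplier rule; the regularity obstruction above is what actually makes the hooked case harder than the pinned one, and your proposal would need this approximation inserted to be complete.
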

\begin{proof}
By the Lagrange multiplier method (cf.\ \cite[Proposition 43.21]{Zeid3}), there is a multiplier $\lambda \in \R$ such that 
\begin{align}\label{eq:Fderivative}
\big\langle D\mathcal{B}_p[\gamma] + \lambda D\mathcal{L}[\gamma], h \big\rangle=0
\end{align}
for all $h\in W^{2,p}(0,1;\R^2)$ with $(h(1)-h(0))\cdot e_1=0$ and $h'(1)\cdot e_2=0$, where $D\mathcal{B}_p[\gamma]$ and $D\mathcal{L}[\gamma]$ are the Fr\'echet derivatives of $\mathcal{B}_p$ and $\mathcal{L}$ at $\gamma$, respectively.
By the known computation of the first derivative of $\B_p$ (cf.\ \cite{MR4852697}*{Lemmas~A.3 and A.4}) and the change of variables $\eta:=h\circ\sigma^{-1}$ with $\sigma(t):=\int_0^t|\gamma'|$, we can  rewrite \eqref{eq:Fderivative} in terms of the arclength parametrization $\tilde{\gamma}$ of $\gamma \in \Ah^*$: 
\begin{align}\label{eq:0525-1}
\int_0^L\Big( (1-2p) |\tilde{\gamma}''|^p (\tilde{\gamma}'\cdot \eta') +p|\tilde{\gamma}''|^{p-2}(\tilde{\gamma}'' \cdot \eta'') +\lambda (\tilde{\gamma}' \cdot \eta')\Big) ds = 0 
\end{align}
for all $\eta\in W^{2,p}(0,L; \R^2)$ with
\begin{align}\label{eq:testfct-BC-h}
    (\eta(L)-\eta(0)) \cdot e_1=0, \quad \eta'(L)\cdot e_2 =0.
\end{align}
Let $k:[0,L]\to\R$ be the arclength parametrized signed curvature of $\gamma$.
By \cite[Proposition 2.1]{MY_AMPA} it directly follows that 
$k\in L^\infty(0,L)$ as well as that \eqref{eq:EL} holds for $\varphi\in C^\infty_{\rm c}(0,L)$, or equivalently for $\varphi \in W^{2,p}_0(0,L)$ up to approximation.
In what follows we check that the boundary conditions for $\varphi$ can be relaxed.

Fix $\varphi \in W^{2,p}(0,L)$ with $\varphi(0)=\varphi'(L)=0$ arbitrarily.
Let $\mathbf{t}$ and $\mathbf{n}$ be the unit tangent vector and the unit normal vector of $\tilde{\gamma}$, respectively, defined by $\mathbf{t}(s):=\partial_s\tilde{\gamma}(s)$ and $\mathbf{n}(s):=R_{\pi/2}\mathbf{t}(s)$, where $R_{\theta}$ stands for the counterclockwise rotation matrix through angle $\theta\in \R$.
Recall that the Frenet--Serret formula yields $\mathbf{t}'(s)=k(s)\mathbf{n}(s)$ and $\mathbf{n}'(s)=-k(s)\mathbf{t}(s)$, and in particular
\[\mathbf{n}(s)= \mathbf{n}(0)-\int_0^s k(\sigma) \mathbf{t}(\sigma)\,d\sigma, \quad s\in[0,L].\]
Take any sequence $\{ k_j \}_{j\in \N} \subset C^1(0,L)$ such that
$k_j\to k$ in $L^p(0,L)$ as $j\to \infty$.
For each $j\in \N$, set
\begin{align*}
\mathbf{n}_j (s) &:= \mathbf{n}(0)-\int_0^s k_j(\sigma) \mathbf{t}(\sigma)\,d\sigma.
\end{align*} 
Then we see that $\{\mathbf{n}_j\}_{j\in\N} \subset C^2(0,L)\cap C([0,L])$. 
Since $k_j\to k $ in $L^p(0,L)$, we have $\mathbf{n}_j \to \mathbf{n}$ in $C([0,L])$.
Let $f\in C^2([0,L])$ be a function satisfying $f(0)=f'(0)=f'(L)=0$ and $f(L)=1$. 
For each $j\in \N$, set $r_j(s):=f(s)\varphi(L)( \mathbf{n}(L)- \mathbf{n}_j(L))$ and 
\begin{align*}
\eta_j (s) := \varphi(s)\mathbf{n}_j (s) + r_j(s).
\end{align*} 
Note that $\eta_j(0)=\varphi(0)\mathbf{n}(0)$ and $\eta_j(L)=\varphi(L)\mathbf{n}(L)$.
Using again the Frenet--Serret formula, we have
\begin{align*}
\eta_j'(s)&=\vp'(s)\mathbf{n}_j(s)-\vp(s)k_j(s)\mathbf{t}(s)+r'_j(s), \\
\eta_j''(s)&=\vp''(s)\mathbf{n}_j(s)-2\vp'(s)k_j(s)\mathbf{t}(s) -\vp(s)k_j'(s)\mathbf{t}(s) - \vp(s)k_j(s)k(s)\mathbf{n}(s)+r''_j(s), 
\end{align*}
which implies that $\{\eta_j\}_{j\in \N}\subset W^{2,p}(0,L;\R^2)$.
Since $\gamma \in \Ah^*$, we obtain $\mathbf{t}(L)=-e_1$, and this also means that $\mathbf{n}(L)=-e_2$. 
Combining this with $\varphi(0)=\varphi'(L)=0$, we see that $\eta_j$ satisfies \eqref{eq:testfct-BC-h} for each $j\in \N$.

Note that
 \begin{align*}
  (1-2p) |\tilde{\gamma}''|^p (\tilde{\gamma}' \cdot \eta_j') &=(1-2p)|k|^p \Big(  (\tilde{\gamma}' \cdot \varphi' \mathbf{n}_j) - k_j\vp +\tilde{\gamma}'\cdot r'_j \Big), \\ 
 p|\tilde{\gamma}''|^{p-2}(\tilde{\gamma}''\cdot \eta_j'') &= p |k|^{p-2}\Big( (\tilde{\gamma}''\cdot \varphi''\mathbf{n}_j)- |k|^{2} k_j\varphi + \tilde{\gamma}''\cdot r''_j \Big),
 \end{align*}
where $|\tilde{\gamma}'|\equiv|\mathbf{n}|\equiv1$, $|\tilde{\gamma}''(s)|=|k(s)|$, and $(\tilde{\gamma}'', \mathbf{t})=0$ were used.
Substituting $\eta=\eta_j$ into \eqref{eq:0525-1}, we obtain
\begin{align*}
\int_0^L \bigg(& (1-2p)|k|^p \Big( \vp'(\tilde{\gamma}'\cdot \mathbf{n}_j) - k_j\vp +\tilde{\gamma}'\cdot r'_j\Big) \\
&+p|k|^{p-2} \Big( \vp''(\tilde{\gamma}''\cdot \mathbf{n}_j) - |k|^{2} k_j\varphi +\tilde{\gamma}''\cdot r''_j\Big) 
+\lambda \Big( \vp'(\tilde{\gamma}'\cdot \mathbf{n}_j)-k_j\varphi\Big)
\bigg) ds = 0.
\end{align*}
By using $k_j\to k$ in $L^p(0,L)$, $\mathbf{n}_j\to \mathbf{n}$ in $C([0,L];\R^2)$, $r_j\to0$ in $C^2([0,L];\R^2)$ and $k \in L^{\infty}(0,L)$, we can obtain \eqref{eq:EL} as the limit of the above equality.
\end{proof}

From this we deduce the additional natural boundary condition:
\begin{lemma}[Improved regularity and natural boundary condition]\label{lem:BC-hooked-w}
Let $\gamma\in \Ah^*$ be a critical point of $\mathcal{B}_p$ in $\Ah^*$ and $k:[0,L]\to\R$ be the arclength parametrized signed curvature of $\gamma$. 
Then $k\in C([0,L])$.
In addition, the function $w:=|k|^{p-2}k$ is of class $C^2$ and satisfies
\[
w(0)=0, \quad w'(L)=0.
\]
\end{lemma}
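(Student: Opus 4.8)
The plan is to first upgrade regularity through the interior equation and then read off the two boundary conditions by varying test functions. Since Lemma~\ref{lem:EL-hooked} shows that $k\in L^\infty(0,L)$ and that \eqref{eq:EL} holds for every $\varphi\in W^{2,p}(0,L)$ with $\varphi(0)=\varphi'(L)=0$---in particular for all $\varphi\in C^\infty_{\rm c}(0,L)$, which trivially satisfy these constraints---the curve $\gamma$ is a $p$-elastica in the sense of the paper's definition. Hence Proposition~\ref{prop:regularity-w} applies directly and yields $k\in C([0,L])$ together with $w:=|k|^{p-2}k\in C^2([0,L])$ solving \eqref{eq:0610-1} pointwise. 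Rewriting \eqref{eq:0610-1} via the identities $|w|^{2/(p-1)}w=|k|^pk$ and $|w|^{(2-p)/(p-1)}w=k$, this interior equation reads $pw''=-(p-1)|k|^pk+\lambda k$ on $[0,L]$. This settles the regularity assertions of the lemma and supplies the pointwise identity used below.

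For the boundary conditions I would exploit the full admissible test class. Fix an arbitrary $\varphi\in W^{2,p}(0,L)$ with $\varphi(0)=0$ and $\varphi'(L)=0$. Since $w\in C^2([0,L])$, I can integrate $\int_0^L pw\varphi''\,ds$ by parts twice, which produces the boundary contribution $[\,pw\varphi'-pw'\varphi\,]_0^L$ and the interior integral $\int_0^L pw''\varphi\,ds$. Substituting the pointwise identity $pw''=-(p-1)|k|^pk+\lambda k$ into this last integral makes it cancel exactly against the remaining terms $\int_0^L\big((p-1)|k|^pk-\lambda k\big)\varphi\,ds$ of \eqref{eq:EL}. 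Thus \eqref{eq:EL} collapses to the purely boundary identity
\[
pw(L)\varphi'(L)-pw'(L)\varphi(L)-pw(0)\varphi'(0)+pw'(0)\varphi(0)=0.
\]

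Applying the constraints $\varphi(0)=0$ and $\varphi'(L)=0$ annihilates the first and last terms, leaving $pw'(L)\varphi(L)+pw(0)\varphi'(0)=0$ for every admissible $\varphi$. To conclude, I would observe that the boundary data $\varphi'(0)$ and $\varphi(L)$ can be prescribed freely and independently within the admissible class (for instance using cubic polynomials with $\varphi(0)=\varphi'(L)=0$ but arbitrary $\varphi'(0)$ and $\varphi(L)$). Choosing $\varphi$ with $\varphi'(0)\neq0$ and $\varphi(L)=0$ forces $w(0)=0$, while choosing $\varphi$ with $\varphi'(0)=0$ and $\varphi(L)\neq0$ forces $w'(L)=0$, which are exactly the claimed natural boundary conditions.

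The only genuinely delicate point is the legitimacy of the double integration by parts together with the clean cancellation of the interior integrals; both rely on the $C^2$-regularity of $w$ from Proposition~\ref{prop:regularity-w} and on the $k$-form of \eqref{eq:0610-1} above, so no additional approximation beyond Lemma~\ref{lem:EL-hooked} is needed. Everything else is elementary once the admissible test functions are allowed to realize arbitrary pairs $(\varphi'(0),\varphi(L))$ subject to $\varphi(0)=\varphi'(L)=0$.
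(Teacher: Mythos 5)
Your proposal is correct and follows essentially the same route as the paper's own proof: regularity via Lemma~\ref{lem:EL-hooked} and Proposition~\ref{prop:regularity-w}, then integration by parts against the enlarged test class from Lemma~\ref{lem:EL-hooked}, cancellation of the interior terms via \eqref{eq:0610-1}, and independent choices of $\varphi'(0)$ and $\varphi(L)$ to extract $w(0)=0$ and $w'(L)=0$. The only cosmetic difference is that you write the interior identity in terms of $k$ rather than $w$, which is the same equation.
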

\begin{proof}
Let $\gamma\in \Ah^*$ be a critical point of $\mathcal{B}_p$ in $\Ah^*$.
By Lemma~\ref{lem:EL-hooked}, there exists $\lambda\in \R$ such that the arclength parametrized signed curvature $k$ of $\gamma$ satisfies \eqref{eq:EL} for all $\varphi\in C^\infty_{\rm c}(0,L)$ in particular, and hence $\gamma$ is a $p$-elastica so that $k\in C([0,L])$ and $w:=|k|^{p-2}k\in C^2([0,L])$ by Proposition \ref{prop:regularity-w}.
In addition, we deduce from Lemma~\ref{lem:EL-hooked} that
\begin{align*}
\int_0^L \! \Big(p  w \varphi'' + (p-1)|w|^{\frac{2}{p-1}}w \varphi - \lambda |w|^{\frac{2-p}{p-1}}w \varphi \Big) ds = 0
\end{align*}
for $\varphi \in W^{2,p}(0,L)$ with $\varphi(0)=\varphi'(L)=0$.
Using the boundary condition for $\varphi$ together with the regularity $w\in C^2([0,L])$, we further deduce via integration by parts that
\begin{align*}
0 
&= \big[ p w(s) \varphi'(s) - p w'(s) \varphi(s) \big]_{s=0}^{s=L} +\int_0^L\! \Big(pw'' + (p-1)|w|^{\frac{2}{p-1}}w - \lambda |w|^{\frac{2-p}{p-1}}w  \Big)\vp \,ds \\
&= -pw'(L)\varphi(L)-pw(0)\varphi'(0).
\end{align*}
Then, with the choice of $\varphi$ satisfying $\varphi(L)=1$ and $\varphi'(0)=0$ (resp.\ $\varphi(L)=0$ and $\varphi'(0)=1$), we obtain $w'(L)=0$ (resp.\ $w(0)=0$).
\end{proof}


\bibliography{ref_Miura-Yoshizawa-ver6}

@article {MY_AMPA,
    AUTHOR = {Miura, Tatsuya and Yoshizawa, Kensuke},
     TITLE = {Complete classification of planar {$p$}-elasticae},
   JOURNAL = {Ann. Mat. Pura Appl. (4)},
  FJOURNAL = {Annali di Matematica Pura ed Applicata. Series IV},
    VOLUME = {203},
      YEAR = {2024},
    NUMBER = {5},
     PAGES = {2319--2356},
      ISSN = {0373-3114,1618-1891},
   MRCLASS = {49Q10 (33E05 53A04)},
  MRNUMBER = {4797294},
       DOI = {10.1007/s10231-024-01445-z},
       URL = {https://doi.org/10.1007/s10231-024-01445-z},
}

@article {MY_Crelle,
    AUTHOR = {Miura, Tatsuya and Yoshizawa, Kensuke},
     TITLE = {General rigidity principles for stable and minimal elastic
              curves},
   JOURNAL = {J. Reine Angew. Math.},
  FJOURNAL = {Journal f\"ur die Reine und Angewandte Mathematik. [Crelle's
              Journal]},
    VOLUME = {810},
      YEAR = {2024},
     PAGES = {253--281},
      ISSN = {0075-4102,1435-5345},
   MRCLASS = {49K05 (49Q20)},
  MRNUMBER = {4739246},
       DOI = {10.1515/crelle-2024-0018},
       URL = {https://doi.org/10.1515/crelle-2024-0018},
}

@article {MR4852697,
    AUTHOR = {Miura, Tatsuya and Yoshizawa, Kensuke},
     TITLE = {Pinned planar {$p$}-elasticae},
   JOURNAL = {Indiana Univ. Math. J.},
  FJOURNAL = {Indiana University Mathematics Journal},
    VOLUME = {73},
      YEAR = {2024},
    NUMBER = {6},
     PAGES = {2155--2208},
      ISSN = {0022-2518,1943-5258},
   MRCLASS = {49Q10 (33E05 53A04)},
  MRNUMBER = {4852697},
}

@article{Rynne19,
	author = {Rynne, Bryan P.},
	date-added = {2024-03-19 09:50:38 +0900},
	date-modified = {2024-03-19 09:50:38 +0900},
	fjournal = {Electronic Journal of Differential Equations},
	issn = {1072-6691},
	journal = {Electron. J. Differential Equations},
	mrclass = {35K92 (35B32 35B35 35K20)},
	mrnumber = {3992305},
	pages = {Paper No. 94, 17},
	title = {Linearized stability implies asymptotic stability for radially symmetric equilibria of {$p$}-{L}aplacian boundary value problems in the unit ball in {$\Bbb R^N$}},
	year = {2019}}

@article{Rynne20,
	author = {Rynne, Bryan P.},
	date-added = {2024-03-19 09:50:28 +0900},
	date-modified = {2024-03-19 09:50:28 +0900},
	doi = {10.1017/prm.2018.123},
	fjournal = {Proceedings of the Royal Society of Edinburgh. Section A. Mathematics},
	issn = {0308-2105,1473-7124},
	journal = {Proc. Roy. Soc. Edinburgh Sect. A},
	mrclass = {35K92 (35B32 35B35 35K20)},
	mrnumber = {4091062},
	number = {3},
	pages = {1313--1338},
	title = {Linearized stability implies dynamic stability for equilibria of 1-dimensional, {$p$}-{L}aplacian boundary value problems},
	url = {https://doi.org/10.1017/prm.2018.123},
	volume = {150},
	year = {2020},
	bdsk-url-1 = {https://doi.org/10.1017/prm.2018.123}}

@article{Tak00,
	author = {Takeuchi, Shingo},
	date-added = {2024-03-19 09:50:09 +0900},
	date-modified = {2024-03-19 09:50:09 +0900},
	doi = {10.1137/S003614109834257X},
	fjournal = {SIAM Journal on Mathematical Analysis},
	issn = {0036-1410,1095-7154},
	journal = {SIAM J. Math. Anal.},
	mrclass = {35K65 (35B40 35K20 35K55)},
	mrnumber = {1745482},
	mrreviewer = {Zhuo\ Qun\ Wu},
	number = {3},
	pages = {678--692},
	title = {Behavior of solutions near the flat hats of stationary solutions for a degenerate parabolic equation},
	url = {https://doi.org/10.1137/S003614109834257X},
	volume = {31},
	year = {2000},
	bdsk-url-1 = {https://doi.org/10.1137/S003614109834257X}}

@article{TY00,
	author = {Takeuchi, Shingo and Yamada, Yoshio},
	date-added = {2024-03-19 09:49:53 +0900},
	date-modified = {2024-03-19 09:49:53 +0900},
	doi = {10.1016/S0362-546X(98)00329-0},
	fjournal = {Nonlinear Analysis. Theory, Methods \& Applications. An International Multidisciplinary Journal},
	issn = {0362-546X,1873-5215},
	journal = {Nonlinear Anal.},
	mrclass = {35K55 (35B35 35B40 35D05)},
	mrnumber = {1769251},
	mrreviewer = {Laurent\ V\'{e}ron},
	number = {1},
	pages = {41--61},
	title = {Asymptotic properties of a reaction-diffusion equation with degenerate {$p$}-{L}aplacian},
	url = {https://doi.org/10.1016/S0362-546X(98)00329-0},
	volume = {42},
	year = {2000},
	bdsk-url-1 = {https://doi.org/10.1016/S0362-546X(98)00329-0}}

@article{HHH20,
	author = {He, Tieshan and He, Lang and Huang, Yehui},
	date-added = {2024-03-19 09:49:28 +0900},
	date-modified = {2024-03-19 09:49:28 +0900},
	doi = {10.1016/j.na.2019.111741},
	fjournal = {Nonlinear Analysis. Theory, Methods \& Applications. An International Multidisciplinary Journal},
	issn = {0362-546X,1873-5215},
	journal = {Nonlinear Anal.},
	mrclass = {35J62 (35J20 35J92 58E05)},
	mrnumber = {4053760},
	pages = {111741, 20},
	title = {Infinitely many nodal solutions for generalized logistic equations without odd symmetry on reaction},
	url = {https://doi.org/10.1016/j.na.2019.111741},
	volume = {195},
	year = {2020},
	bdsk-url-1 = {https://doi.org/10.1016/j.na.2019.111741}}

@article{FigalliZhang22,
	author = {Figalli, Alessio and Zhang, Yi Ru-Ya},
	date-added = {2024-03-19 09:47:58 +0900},
	date-modified = {2024-03-19 09:47:58 +0900},
	doi = {10.1215/00127094-2022-0051},
	fjournal = {Duke Mathematical Journal},
	issn = {0012-7094,1547-7398},
	journal = {Duke Math. J.},
	mrclass = {46E35 (26D10)},
	mrnumber = {4484209},
	mrreviewer = {Alireza\ Ranjbar-Motlagh},
	number = {12},
	pages = {2407--2459},
	title = {Sharp gradient stability for the {S}obolev inequality},
	url = {https://doi.org/10.1215/00127094-2022-0051},
	volume = {171},
	year = {2022},
	bdsk-url-1 = {https://doi.org/10.1215/00127094-2022-0051}}

@incollection{CV21,
	author = {Ciani, Simone and Vespri, Vincenzo},
	booktitle = {Anomalies in partial differential equations},
	date-added = {2024-03-19 09:47:27 +0900},
	date-modified = {2024-03-19 09:47:27 +0900},
	doi = {10.1007/978-3-030-61346-4\_5},
	isbn = {978-3-030-61346-4; 978-3-030-61345-7},
	mrclass = {35K25},
	mrnumber = {4219198},
	pages = {99--125},
	publisher = {Springer, Cham},
	series = {Springer INdAM Ser.},
	title = {An introduction to {B}arenblatt solutions for anisotropic {$p$}-{L}aplace equations},
	url = {https://doi.org/10.1007/978-3-030-61346-4_5},
	volume = {43},
	year = {2021},
	bdsk-url-1 = {https://doi.org/10.1007/978-3-030-61346-4_5},
	bdsk-url-2 = {https://doi.org/10.1007/978-3-030-61346-4%5C_5}}

@article{Miura_LiYau,
	author = {Miura, Tatsuya},
	date-added = {2024-02-16 17:43:09 +0900},
	date-modified = {2024-02-16 17:43:21 +0900},
	doi = {10.1007/s00526-023-02559-7},
	fjournal = {Calculus of Variations and Partial Differential Equations},
	issn = {0944-2669,1432-0835},
	journal = {Calc. Var. Partial Differential Equations},
	mrclass = {53A04 (49Q10 53E40)},
	mrnumber = {4631455},
	number = {8},
	pages = {Paper No. 216, 28},
	title = {Li-{Y}au type inequality for curves in any codimension},
	url = {https://doi.org/10.1007/s00526-023-02559-7},
	volume = {62},
	year = {2023},
	bdsk-url-1 = {https://doi.org/10.1007/s00526-023-02559-7}}

@article{GPT23,
	author = {Gruber, Anthony and P\'{a}mpano, \'{A}lvaro and Toda, Magdalena},
	date-added = {2024-02-16 17:37:33 +0900},
	date-modified = {2024-02-16 17:37:33 +0900},
	doi = {10.1142/S0219530523500173},
	fjournal = {Analysis and Applications},
	issn = {0219-5305,1793-6861},
	journal = {Anal. Appl. (Singap.)},
	mrclass = {53A04 (53A05 53C42)},
	mrnumber = {4663707},
	number = {6},
	pages = {1533--1559},
	title = {Instability of closed {$p$}-elastic curves in {$\Bbb{S}^2$}},
	url = {https://doi.org/10.1142/S0219530523500173},
	volume = {21},
	year = {2023},
	bdsk-url-1 = {https://doi.org/10.1142/S0219530523500173}}

@article{MN13,
	author = {Masnou, S. and Nardi, G.},
	date-added = {2023-05-23 13:36:33 +0900},
	date-modified = {2023-05-23 13:36:38 +0900},
	fjournal = {Journal of Convex Analysis},
	issn = {0944-6532},
	journal = {J. Convex Anal.},
	mrclass = {49J45 (58E30 74B20)},
	mrnumber = {3136594},
	mrreviewer = {Andrzej M. My\'{s}li\'{n}ski},
	number = {3},
	pages = {617--653},
	title = {A coarea-type formula for the relaxation of a generalized elastica functional},
	volume = {20},
	year = {2013}}

@article{BDP93,
	author = {Bellettini, G. and Dal Maso, G. and Paolini, M.},
	date-added = {2023-05-23 13:30:17 +0900},
	date-modified = {2023-05-23 13:30:23 +0900},
	fjournal = {Annali della Scuola Normale Superiore di Pisa. Classe di Scienze. Serie IV},
	issn = {0391-173X},
	journal = {Ann. Scuola Norm. Sup. Pisa Cl. Sci. (4)},
	mrclass = {49Q20 (49J45)},
	mrnumber = {1233638},
	mrreviewer = {Francesco Ferro},
	number = {2},
	pages = {247--297},
	title = {Semicontinuity and relaxation properties of a curvature depending functional in {$2$}{D}},
	url = {http://www.numdam.org/item?id=ASNSP_1993_4_20_2_247_0},
	volume = {20},
	year = {1993},
	bdsk-url-1 = {http://www.numdam.org/item?id=ASNSP_1993_4_20_2_247_0}}

@article{BM07,
	author = {Bellettini, G. and Mugnai, L.},
	date-added = {2023-05-23 13:25:09 +0900},
	date-modified = {2023-05-23 13:25:17 +0900},
	fjournal = {Journal of Convex Analysis},
	issn = {0944-6532},
	journal = {J. Convex Anal.},
	mrclass = {49J45},
	mrnumber = {2341303},
	mrreviewer = {Michele Miranda},
	number = {3},
	pages = {543--564},
	title = {A varifolds representation of the relaxed elastica functional},
	volume = {14},
	year = {2007}}

@article{OW23,
	author = {Okabe, Shinya and Wheeler, Glen},
	date-added = {2023-04-24 15:36:59 +0900},
	date-modified = {2023-04-24 15:37:15 +0900},
	doi = {10.1016/j.matpur.2023.02.001},
	fjournal = {Journal de Math{\'e}matiques Pures et Appliqu{\'e}es. Neuvi{\`e}me S{\'e}rie},
	issn = {0021-7824},
	journal = {J. Math. Pures Appl. (9)},
	language = {English},
	pages = {1--42},
	title = {The {{\(p\)}}-elastic flow for planar closed curves with constant parametrization},
	volume = {173},
	year = {2023},
	zbmath = {7674691},
	bdsk-url-1 = {https://doi.org/10.1016/j.matpur.2023.02.001}}

@article{MO21,
	author = {Miura, Tatsuya and Okabe, Shinya},
	date-added = {2023-03-27 16:53:20 +0900},
	date-modified = {2023-03-27 16:53:26 +0900},
	doi = {10.1007/s00205-020-01591-7},
	fjournal = {Archive for Rational Mechanics and Analysis},
	issn = {0003-9527},
	journal = {Arch. Ration. Mech. Anal.},
	mrclass = {53E99 (53A04 58J65)},
	mrnumber = {4201623},
	mrreviewer = {Xuan Hien Nguyen},
	number = {2},
	pages = {1111--1129},
	title = {On the isoperimetric inequality and surface diffusion flow for multiply winding curves},
	url = {https://doi.org/10.1007/s00205-020-01591-7},
	volume = {239},
	year = {2021},
	bdsk-url-1 = {https://doi.org/10.1007/s00205-020-01591-7}}

@book{HKM06_book,
	author = {Heinonen, Juha and Kilpel\"{a}inen, Tero and Martio, Olli},
	date-added = {2023-03-27 16:28:27 +0900},
	date-modified = {2023-03-27 16:34:56 +0900},
	isbn = {0-486-45050-3},
	mrclass = {31C45 (35J60 35J70)},
	mrnumber = {2305115},
	mrreviewer = {Piotr Haj\l asz},
	pages = {xii+404},
	publisher = {Dover Publications, Inc., Mineola, NY},
	title = {Nonlinear potential theory of degenerate elliptic equations},
	year = {2006}}

@book{Lind19,
	author = {Lindqvist, Peter},
	date-added = {2023-03-27 16:20:18 +0900},
	date-modified = {2023-03-27 16:20:26 +0900},
	doi = {10.1007/978-3-030-14501-9},
	isbn = {978-3-030-14500-2; 978-3-030-14501-9},
	mrclass = {35-02 (31C45 35A01 35B65 35D30 35D40 35J60 35J92)},
	mrnumber = {3931688},
	mrreviewer = {Vladimir Bobkov},
	pages = {xi+104},
	publisher = {Springer, Cham},
	series = {SpringerBriefs in Mathematics},
	title = {Notes on the stationary {$p$}-{L}aplace equation},
	url = {https://doi.org/10.1007/978-3-030-14501-9},
	year = {2019},
	bdsk-url-1 = {https://doi.org/10.1007/978-3-030-14501-9}}

@article{Bar52,
	author = {Barenblatt, G. I.},
	date-added = {2023-03-27 16:00:02 +0900},
	date-modified = {2023-03-27 16:00:10 +0900},
	journal = {Akad. Nauk SSSR. Prikl. Mat. Meh.},
	mrclass = {76.1X},
	mrnumber = {0052948},
	mrreviewer = {R. E. Gaskell},
	pages = {679--698},
	title = {On self-similar motions of a compressible fluid in a porous medium},
	volume = {16},
	year = {1952}}

@article{KV88,
	author = {Kamin, S. and V\'{a}zquez, J. L.},
	date-added = {2023-03-27 15:37:23 +0900},
	date-modified = {2023-03-27 15:37:33 +0900},
	doi = {10.4171/RMI/77},
	fjournal = {Revista Matem\'{a}tica Iberoamericana},
	issn = {0213-2230},
	journal = {Rev. Mat. Iberoamericana},
	mrclass = {35B05 (35B40 35J60 35K55)},
	mrnumber = {1028745},
	mrreviewer = {P. Lindqvist},
	number = {2},
	pages = {339--354},
	title = {Fundamental solutions and asymptotic behaviour for the {$p$}-{L}aplacian equation},
	url = {https://doi.org/10.4171/RMI/77},
	volume = {4},
	year = {1988},
	bdsk-url-1 = {https://doi.org/10.4171/RMI/77}}

@book{Vaz06_book,
	author = {V\'{a}zquez, Juan Luis},
	date-added = {2023-03-27 14:13:49 +0900},
	date-modified = {2023-03-27 16:35:37 +0900},
	doi = {10.1093/acprof:oso/9780199202973.001.0001},
	isbn = {978-0-19-920297-3; 0-19-920297-4},
	mrclass = {35-02 (35B40 35B65 35K57 47H20)},
	mrnumber = {2282669},
	mrreviewer = {Vicen\c{t}iu D. R\u{a}dulescu},
	pages = {xiv+234},
	publisher = {Oxford University Press, Oxford},
	series = {Oxford Lecture Series in Mathematics and its Applications},
	title = {Smoothing and decay estimates for nonlinear diffusion equations},
	url = {https://doi.org/10.1093/acprof:oso/9780199202973.001.0001},
	volume = {33},
	year = {2006},
	bdsk-url-1 = {https://doi.org/10.1093/acprof:oso/9780199202973.001.0001}}

@book{DiB93_book,
	author = {DiBenedetto, Emmanuele},
	date-added = {2023-03-27 14:12:24 +0900},
	date-modified = {2023-03-27 14:12:41 +0900},
	doi = {10.1007/978-1-4612-0895-2},
	isbn = {0-387-94020-0},
	mrclass = {35K65 (35-02)},
	mrnumber = {1230384},
	mrreviewer = {Ya Zhe Chen},
	pages = {xvi+387},
	publisher = {Springer-Verlag, New York},
	series = {Universitext},
	title = {Degenerate parabolic equations},
	url = {https://doi.org/10.1007/978-1-4612-0895-2},
	year = {1993},
	bdsk-url-1 = {https://doi.org/10.1007/978-1-4612-0895-2}}

@article{KV93,
	author = {Kamin, Shoshana and V\'{e}ron, Laurent},
	date-added = {2023-03-27 13:16:45 +0900},
	date-modified = {2023-03-27 13:16:54 +0900},
	doi = {10.2307/2160060},
	fjournal = {Proceedings of the American Mathematical Society},
	issn = {0002-9939},
	journal = {Proc. Amer. Math. Soc.},
	mrclass = {35J60 (35J70)},
	mrnumber = {1139470},
	mrreviewer = {Philip W. Schaefer},
	number = {4},
	pages = {1079--1085},
	title = {Flat core properties associated to the {$p$}-{L}aplace operator},
	url = {https://doi.org/10.2307/2160060},
	volume = {118},
	year = {1993},
	bdsk-url-1 = {https://doi.org/10.2307/2160060}}

@article{GV88,
	author = {Guedda, Mohammed and V\'{e}ron, Laurent},
	date-added = {2023-03-27 12:58:30 +0900},
	date-modified = {2023-03-27 12:58:38 +0900},
	doi = {10.2307/2001132},
	fjournal = {Transactions of the American Mathematical Society},
	issn = {0002-9947},
	journal = {Trans. Amer. Math. Soc.},
	mrclass = {35B32 (35J60)},
	mrnumber = {965762},
	mrreviewer = {Maria J. Esteban},
	number = {1},
	pages = {419--431},
	title = {Bifurcation phenomena associated to the {$p$}-{L}aplace operator},
	url = {https://doi.org/10.2307/2001132},
	volume = {310},
	year = {1988},
	bdsk-url-1 = {https://doi.org/10.2307/2001132}}

@article{Poz22,
	author = {Pozzetta, Marco},
	date-added = {2022-12-26 08:42:42 +0900},
	date-modified = {2022-12-26 08:42:46 +0900},
	doi = {10.1016/j.na.2021.112581},
	fjournal = {Nonlinear Analysis. Theory, Methods \& Applications. An International Multidisciplinary Journal},
	issn = {0362-546X},
	journal = {Nonlinear Anal.},
	mrclass = {53E40 (35R01 46N20)},
	mrnumber = {4318851},
	mrreviewer = {Rongpei Huang},
	pages = {Paper No. 112581, 53},
	title = {Convergence of elastic flows of curves into manifolds},
	url = {https://doi-org.ezproxy.lib.kyushu-u.ac.jp/10.1016/j.na.2021.112581},
	volume = {214},
	year = {2022},
	bdsk-url-1 = {https://doi-org.ezproxy.lib.kyushu-u.ac.jp/10.1016/j.na.2021.112581},
	bdsk-url-2 = {https://doi.org/10.1016/j.na.2021.112581}}

@article{DSSVV05,
	author = {Du, Quan and Smith, Chaim and Shiffeldrim, Nahum and Vologodskaia, Maria and Vologodskii, Alexander},
	date-added = {2022-10-20 16:54:44 +0900},
	date-modified = {2022-11-13 12:48:09 +0900},
	journal = {Proc. Natl. Acad. Sci. U.S.A},
	pages = {5397--5402},
	title = {Cyclization of short {DNA} fragments and bending fluctuations of the double helix},
	volume = {102},
	year = {2005}}

@article{Sac08_3,
	author = {Sachkov, Yu. L.},
	date-added = {2022-09-20 19:13:09 +0900},
	date-modified = {2022-09-20 19:14:02 +0900},
	doi = {10.1007/s10883-008-9044-x},
	fjournal = {Journal of Dynamical and Control Systems},
	issn = {1079-2724},
	journal = {J. Dyn. Control Syst.},
	mrclass = {49Q10 (49J15 74G60 74G65 74K10 93B29 93C10)},
	mrnumber = {2425306},
	mrreviewer = {Andrey V. Sarychev},
	number = {3},
	pages = {409--439},
	title = {Conjugate points in the {E}uler elastic problem},
	url = {https://doi.org/10.1007/s10883-008-9044-x},
	volume = {14},
	year = {2008},
	bdsk-url-1 = {https://doi.org/10.1007/s10883-008-9044-x}}

@book{Zeid3,
	author = {Zeidler, Eberhard},
	date-added = {2022-07-04 13:53:03 +0900},
	date-modified = {2022-07-04 14:05:09 +0900},
	doi = {10.1007/978-1-4612-5020-3},
	isbn = {0-387-90915-X},
	mrclass = {49-02 (46G99 47Hxx 58-02 90Cxx)},
	mrnumber = {768749},
	mrreviewer = {Jean Mawhin},
	pages = {xxii+662},
	publisher = {Springer-Verlag, New York},
	title = {Nonlinear functional analysis and its applications. {III}},
	url = {https://doi.org/10.1007/978-1-4612-5020-3},
	year = {1985},
	bdsk-url-1 = {https://doi.org/10.1007/978-1-4612-5020-3}}

@article{BVH,
	author = {Blatt, Simon and Hopper, Christopher P. and Vorderobermeier, Nicole},
	date-added = {2022-07-04 13:48:00 +0900},
	date-modified = {2022-07-04 13:48:04 +0900},
	doi = {10.1007/s00028-022-00791-w},
	fjournal = {Journal of Evolution Equations},
	issn = {1424-3199},
	journal = {J. Evol. Equ.},
	mrclass = {53E99 (53A04)},
	mrnumber = {4416790},
	number = {2},
	pages = {Paper No. 41, 25},
	title = {A minimising movement scheme for the {$p$}-elastic energy of curves},
	url = {https://doi.org/10.1007/s00028-022-00791-w},
	volume = {22},
	year = {2022},
	bdsk-url-1 = {https://doi.org/10.1007/s00028-022-00791-w}}

@article{BHV,
	author = {Blatt, Simon and Hopper, Christopher and Vorderobermeier, Nicole},
	date-added = {2022-07-04 13:47:10 +0900},
	date-modified = {2022-07-04 13:47:25 +0900},
	doi = {10.1515/anona-2022-0244},
	fjournal = {Advances in Nonlinear Analysis},
	issn = {2191-9496},
	journal = {Adv. Nonlinear Anal.},
	mrclass = {53E99 (53A04)},
	mrnumber = {4418802},
	number = {1},
	pages = {1383--1411},
	title = {A regularized gradient flow for the {$p$}-elastic energy},
	url = {https://doi.org/10.1515/anona-2022-0244},
	volume = {11},
	year = {2022},
	bdsk-url-1 = {https://doi.org/10.1515/anona-2022-0244}}

@article{nabe14,
	author = {Watanabe, Kohtaro},
	date-added = {2022-04-09 17:42:42 +0900},
	date-modified = {2022-04-09 17:42:46 +0900},
	doi = {10.2996/kmj/1404393898},
	fjournal = {Kodai Mathematical Journal},
	issn = {0386-5991},
	journal = {Kodai Math. J.},
	mrclass = {49J10 (31B30 33E05)},
	mrnumber = {3229087},
	mrreviewer = {Teemu Lukkari},
	number = {2},
	pages = {453--474},
	title = {Planar {$p$}-elastic curves and related generalized complete elliptic integrals},
	url = {https://doi.org/10.2996/kmj/1404393898},
	volume = {37},
	year = {2014},
	bdsk-url-1 = {https://doi.org/10.2996/kmj/1404393898}}

@article{Tru83,
	author = {Truesdell, C.},
	date-added = {2022-04-09 17:42:13 +0900},
	date-modified = {2022-04-09 17:42:18 +0900},
	doi = {10.1090/S0273-0979-1983-15187-X},
	fjournal = {American Mathematical Society. Bulletin. New Series},
	issn = {0273-0979},
	journal = {Bull. Amer. Math. Soc. (N.S.)},
	mrclass = {01A05 (49-03 73-03 73Cxx)},
	mrnumber = {714991},
	mrreviewer = {John M. Ball},
	number = {3},
	pages = {293--310},
	title = {The influence of elasticity on analysis: the classic heritage},
	url = {https://doi.org/10.1090/S0273-0979-1983-15187-X},
	volume = {9},
	year = {1983},
	bdsk-url-1 = {https://doi.org/10.1090/S0273-0979-1983-15187-X}}

@article{Takeuchi_RIMS,
	author = {Takeuchi, Shingo},
	date-added = {2022-04-09 17:40:14 +0900},
	date-modified = {2022-04-10 13:51:13 +0900},
	journal = {RIMS k\^oky\^uroku},
	pages = {71--101},
	title = {Generalized {J}acobian elliptic functions},
	url = {https://www.kurims.kyoto-u.ac.jp/~kyodo/kokyuroku/contents/pdf/1838-07.pdf},
	volume = {1838},
	year = {2013},
	bdsk-url-1 = {https://doi.org/10.3934/cpaa.2014.13.2675}}

@article{Takeuchi12,
	author = {Takeuchi, Shingo},
	date-added = {2022-04-09 17:38:30 +0900},
	date-modified = {2022-04-09 17:38:36 +0900},
	doi = {10.1016/j.jmaa.2011.06.063},
	fjournal = {Journal of Mathematical Analysis and Applications},
	issn = {0022-247X},
	journal = {J. Math. Anal. Appl.},
	mrclass = {34C23 (33E05 34B15 35J92)},
	mrnumber = {2832071},
	mrreviewer = {Philip Korman},
	number = {1},
	pages = {24--35},
	title = {Generalized {J}acobian elliptic functions and their application to bifurcation problems associated with {$p$}-{L}aplacian},
	url = {https://doi.org/10.1016/j.jmaa.2011.06.063},
	volume = {385},
	year = {2012},
	bdsk-url-1 = {https://doi.org/10.1016/j.jmaa.2011.06.063}}

@incollection{Sin,
	author = {Singer, David A.},
	booktitle = {Curvature and variational modeling in physics and biophysics},
	date-added = {2022-04-09 17:38:07 +0900},
	date-modified = {2022-04-09 17:38:10 +0900},
	doi = {10.1063/1.2918095},
	mrclass = {58E50 (37J35 49J05 53C20 74G65 74K10)},
	mrnumber = {2483890},
	mrreviewer = {Anders Linn\'{e}r},
	pages = {3--32},
	publisher = {Amer. Inst. Phys., Melville, NY},
	series = {AIP Conf. Proc.},
	title = {Lectures on elastic curves and rods},
	url = {https://doi.org/10.1063/1.2918095},
	volume = {1002},
	year = {2008},
	bdsk-url-1 = {https://doi.org/10.1063/1.2918095}}

@article{SW20,
	author = {Shioji, Naoki and Watanabe, Kohtaro},
	date-added = {2022-04-09 17:37:37 +0900},
	date-modified = {2022-04-10 14:00:10 +0900},
	doi = {10.4310/CAG.2020.v28.n6.a6},
	fjournal = {Communications in Analysis and Geometry},
	issn = {1019-8385},
	journal = {Comm. Anal. Geom.},
	mrclass = {53E10 (53A04)},
	mrnumber = {4184824},
	mrreviewer = {Chun-Chi Lin},
	number = {6},
	pages = {1451--1487},
	title = {Total {$p$}-powered curvature of closed curves and flat-core closed {$p$}-curves in {${\bf S}^2(G)$}},
	url = {https://doi.org/10.4310/CAG.2020.v28.n6.a6},
	volume = {28},
	year = {2020},
	bdsk-url-1 = {https://doi.org/10.4310/CAG.2020.v28.n6.a6}}

@article{Sac08_2,
	author = {Sachkov, Yu. L.},
	date-added = {2022-04-09 17:37:01 +0900},
	date-modified = {2022-10-21 14:48:55 +0900},
	doi = {10.1007/s10883-008-9039-7},
	fjournal = {Journal of Dynamical and Control Systems},
	issn = {1079-2724},
	journal = {J. Dyn. Control Syst.},
	mrclass = {49Q10 (49J15 49K15 74B20 74K10 74M05 93B29)},
	mrnumber = {2390213},
	mrreviewer = {Andrey V. Sarychev},
	number = {2},
	pages = {169--234},
	title = {Maxwell strata in the {E}uler elastic problem},
	url = {https://doi.org/10.1007/s10883-008-9039-7},
	volume = {14},
	year = {2008},
	bdsk-url-1 = {https://doi.org/10.1007/s10883-008-9039-7}}

@article{Poz20,
	author = {Pozzetta, Marco},
	date-added = {2022-04-09 17:35:56 +0900},
	date-modified = {2022-04-09 17:36:02 +0900},
	fjournal = {Journal of Convex Analysis},
	issn = {0944-6532},
	journal = {J. Convex Anal.},
	mrclass = {49Q15 (49Q10 49Q20 53A07)},
	mrnumber = {4158524},
	mrreviewer = {Erik J. Balder},
	number = {3},
	pages = {845--879},
	title = {A varifold perspective on the {$p$}-elastic energy of planar sets},
	volume = {27},
	year = {2020}}

@article{OPW20,
	author = {Okabe, Shinya and Pozzi, Paola and Wheeler, Glen},
	date-added = {2022-04-09 17:33:38 +0900},
	date-modified = {2022-04-09 17:33:46 +0900},
	doi = {10.1007/s00208-019-01885-6},
	fjournal = {Mathematische Annalen},
	issn = {0025-5831},
	journal = {Math. Ann.},
	mrclass = {35K92 (53A04 53E99)},
	mrnumber = {4150936},
	mrreviewer = {Rongpei Huang},
	number = {1-2},
	pages = {777--828},
	title = {A gradient flow for the {$p$}-elastic energy defined on closed planar curves},
	url = {https://doi.org/10.1007/s00208-019-01885-6},
	volume = {378},
	year = {2020},
	bdsk-url-1 = {https://doi.org/10.1007/s00208-019-01885-6}}

@article{NP20,
	author = {Novaga, Matteo and Pozzi, Paola},
	date-added = {2022-04-09 17:32:49 +0900},
	date-modified = {2022-04-09 17:32:57 +0900},
	doi = {10.1137/19M1262292},
	fjournal = {SIAM Journal on Mathematical Analysis},
	issn = {0036-1410},
	journal = {SIAM J. Math. Anal.},
	mrclass = {53E99 (35K55 35K92 53A04)},
	mrnumber = {4062804},
	number = {1},
	pages = {682--708},
	title = {A second order gradient flow of {$p$}-elastic planar networks},
	url = {https://doi.org/10.1137/19M1262292},
	volume = {52},
	year = {2020},
	bdsk-url-1 = {https://doi.org/10.1137/19M1262292}}

@inproceedings{MM98,
	author = {Masnou, S. and Morel, J.-M.},
	booktitle = {Proceedings 1998 International Conference on Image Processing},
	date-added = {2022-04-09 17:27:30 +0900},
	date-modified = {2022-04-11 08:52:05 +0900},
	doi = {10.1109/ICIP.1998.999016},
	pages = {259-263},
	title = {Level lines based disocclusion},
	volume = {3},
	year = {1998},
	bdsk-url-1 = {https://doi.org/10.1109/ICIP.1998.999016}}

@article{LP22,
	author = {L\'{o}pez, Rafael and P\'{a}mpano, \'{A}lvaro},
	date-added = {2022-04-09 17:20:07 +0900},
	date-modified = {2022-04-09 17:20:12 +0900},
	doi = {10.1016/j.na.2021.112661},
	fjournal = {Nonlinear Analysis. Theory, Methods \& Applications. An International Multidisciplinary Journal},
	issn = {0362-546X},
	journal = {Nonlinear Anal.},
	mrclass = {53C42 (34C05 49Q10)},
	mrnumber = {4340791},
	pages = {Paper No. 112661, 22},
	title = {Stationary soap films with vertical potentials},
	url = {https://doi.org/10.1016/j.na.2021.112661},
	volume = {215},
	year = {2022},
	bdsk-url-1 = {https://doi.org/10.1016/j.na.2021.112661}}

@article{Lev,
	author = {Levien, Raph},
	date-added = {2022-04-09 17:17:28 +0900},
	date-modified = {2022-04-10 13:29:04 +0900},
	journal = {Technical Report No. UCB/EECS-2008-10, University of California, Berkeley},
	title = {The elastica: a mathematical history},
	year = {2008}}

@article{FKN18,
	author = {Ferone, Vincenzo and Kawohl, Bernd and Nitsch, Carlo},
	date-added = {2022-04-09 17:12:49 +0900},
	date-modified = {2022-04-09 17:12:55 +0900},
	doi = {10.4310/MRL.2018.v25.n2.a9},
	fjournal = {Mathematical Research Letters},
	issn = {1073-2780},
	journal = {Math. Res. Lett.},
	mrclass = {49Q10 (51M16)},
	mrnumber = {3826833},
	mrreviewer = {Vanja Nikoli\'{c}},
	number = {2},
	pages = {521--533},
	title = {Generalized elastica problems under area constraint},
	url = {https://doi.org/10.4310/MRL.2018.v25.n2.a9},
	volume = {25},
	year = {2018},
	bdsk-url-1 = {https://doi.org/10.4310/MRL.2018.v25.n2.a9}}

@article {DMOY,
    AUTHOR = {Dall'Acqua, Anna and M\"uller, Marius and Okabe, Shinya and
              Yoshizawa, Kensuke},
     TITLE = {An obstacle problem for the {$p$}-elastic energy},
   JOURNAL = {Calc. Var. Partial Differential Equations},
  FJOURNAL = {Calculus of Variations and Partial Differential Equations},
    VOLUME = {63},
      YEAR = {2024},
    NUMBER = {6},
     PAGES = {Paper No. 145, 43},
      ISSN = {0944-2669,1432-0835},
   MRCLASS = {49J40 (35B38 35B65 35J70 53A04)},
  MRNUMBER = {4761861},
       DOI = {10.1007/s00526-024-02752-2},
       URL = {https://doi.org/10.1007/s00526-024-02752-2},
}

@article{DFLM,
	author = {Dal Maso, G. and Fonseca, I. and Leoni, G. and Morini, M.},
	date-added = {2022-04-09 17:08:03 +0900},
	date-modified = {2022-04-09 17:08:09 +0900},
	doi = {10.1137/070697823},
	fjournal = {SIAM Journal on Mathematical Analysis},
	issn = {0036-1410},
	journal = {SIAM J. Math. Anal.},
	mrclass = {49J45 (26A45 49J05 65K10 68U10 94A08)},
	mrnumber = {2481298},
	number = {6},
	pages = {2351--2391},
	title = {A higher order model for image restoration: the one-dimensional case},
	url = {https://doi.org/10.1137/070697823},
	volume = {40},
	year = {2009},
	bdsk-url-1 = {https://doi.org/10.1137/070697823}}

@article{BM04,
	author = {Bellettini, G. and Mugnai, L.},
	date-added = {2022-04-09 17:07:17 +0900},
	date-modified = {2022-04-09 17:07:23 +0900},
	doi = {10.1016/j.anihpc.2004.01.001},
	fjournal = {Annales de l'Institut Henri Poincar\'{e} C. Analyse Non Lin\'{e}aire},
	issn = {0294-1449},
	journal = {Ann. Inst. H. Poincar\'{e} C Anal. Non Lin\'{e}aire},
	mrclass = {49J45 (49Q20 58E50 74G65 74K10)},
	mrnumber = {2097034},
	mrreviewer = {Antoine Gloria},
	number = {6},
	pages = {839--880},
	title = {Characterization and representation of the lower semicontinuous envelope of the elastica functional},
	url = {https://doi.org/10.1016/j.anihpc.2004.01.001},
	volume = {21},
	year = {2004},
	bdsk-url-1 = {https://doi.org/10.1016/j.anihpc.2004.01.001}}

@article{AGM,
	author = {Arroyo, J. and Garay, O. J. and Menc\'{\i}a, J. J.},
	date-added = {2022-04-09 17:03:36 +0900},
	date-modified = {2022-04-09 17:03:41 +0900},
	doi = {10.1016/S0393-0440(03)00047-0},
	fjournal = {Journal of Geometry and Physics},
	issn = {0393-0440},
	journal = {J. Geom. Phys.},
	mrclass = {58E10 (53A04 74K10)},
	mrnumber = {2007599},
	mrreviewer = {Joel L. Weiner},
	number = {2-3},
	pages = {339--353},
	title = {Closed generalized elastic curves in {$S^2(1)$}},
	url = {https://doi.org/10.1016/S0393-0440(03)00047-0},
	volume = {48},
	year = {2003},
	bdsk-url-1 = {https://doi.org/10.1016/S0393-0440(03)00047-0}}

@article{AM03,
	author = {Ambrosio, Luigi and Masnou, Simon},
	date-added = {2022-04-09 17:01:02 +0900},
	date-modified = {2022-04-09 17:01:09 +0900},
	doi = {10.4171/IFB/72},
	fjournal = {Interfaces and Free Boundaries. Mathematical Analysis, Computation and Applications},
	issn = {1463-9963},
	journal = {Interfaces Free Bound.},
	mrclass = {94A08 (49J10 49J45 68U10)},
	mrnumber = {1959769},
	mrreviewer = {Guy Jumarie},
	number = {1},
	pages = {63--81},
	title = {A direct variational approach to a problem arising in image reconstruction},
	url = {https://doi.org/10.4171/IFB/72},
	volume = {5},
	year = {2003},
	bdsk-url-1 = {https://doi.org/10.4171/IFB/72}}

@article{AM17,
	author = {Acerbi, Emilio and Mucci, Domenico},
	date-added = {2022-04-09 17:00:23 +0900},
	date-modified = {2022-04-09 17:00:29 +0900},
	doi = {10.1017/S0308210516000202},
	fjournal = {Proceedings of the Royal Society of Edinburgh. Section A. Mathematics},
	issn = {0308-2105},
	journal = {Proc. Roy. Soc. Edinburgh Sect. A},
	mrclass = {49J45 (26A45 49Q15 53A07 65K10)},
	mrnumber = {3656702},
	mrreviewer = {Leonard Constantin Kreutz},
	number = {3},
	pages = {449--503},
	title = {Curvature-dependent energies: a geometric and analytical approach},
	url = {https://doi.org/10.1017/S0308210516000202},
	volume = {147},
	year = {2017},
	bdsk-url-1 = {https://doi.org/10.1017/S0308210516000202}}

@article{Miura20,
	author = {Miura, Tatsuya},
	date-added = {2022-04-09 16:53:55 +0900},
	date-modified = {2022-04-09 16:54:57 +0900},
	doi = {10.1007/s00208-019-01821-8},
	fjournal = {Mathematische Annalen},
	issn = {0025-5831},
	journal = {Math. Ann.},
	mrclass = {49Q10 (53A04)},
	mrnumber = {4081125},
	mrreviewer = {Paolo Piccione},
	number = {3-4},
	pages = {1629--1674},
	title = {Elastic curves and phase transitions},
	url = {https://doi.org/10.1007/s00208-019-01821-8},
	volume = {376},
	year = {2020},
	bdsk-url-1 = {https://doi.org/10.1007/s00208-019-01821-8}}

@article{Maddocks1984,
	author = {Maddocks, J. H.},
	doi = {10.1007/BF00275737},
	fjournal = {Archive for Rational Mechanics and Analysis},
	issn = {0003-9527},
	journal = {Arch. Rational Mech. Anal.},
	mrclass = {73H05 (49B22 73K05)},
	mrnumber = {746170},
	mrreviewer = {Jes\'{u}s Hern\'{a}ndez},
	number = {4},
	pages = {311--354},
	title = {Stability of nonlinearly elastic rods},
	url = {https://doi.org/10.1007/BF00275737},
	volume = {85},
	year = {1984},
	bdsk-url-1 = {https://doi.org/10.1007/BF00275737}}

@phdthesis{Maddocks1981,
	author = {Maddocks, J. H.},
	date-modified = {2023-01-18 09:37:36 +0900},
	school = {University of Oxford},
	title = {Analysis of nonlinear differential equations governing the equilibria of an elastic rod and their stability},
	year = {1981}}

\end{document}